\newtheorem{thm}{Theorem}[section]
\newtheorem{lem}[thm]{Lemma}
\newtheorem{prop}[thm]{Proposition}
\newtheorem{remark}[thm]{Remark}
\newcommand{\R}{\mathbb{R}}
\newcommand{\wt}{\widetilde}
\newcommand{\wh}{\widehat}
\newcommand{\ov}{\overline}
\newcommand{\HH}{(H_0)}
\newcommand{\HHH}{(H_1)}
\newcommand{\PP}{\mathcal{P}}
\newcommand{\BBB}{\mathscr{B}}
\begin{document}
\title[A Fisher-KPP equation with free boundaries and time-periodic advections]
{A diffusive Fisher-KPP equation with free boundaries and time-periodic advections$^\S$}
 \thanks{$\S$ This research was partly supported by  NSFC (No. 11271285). }
\author[N. Sun, B. Lou, M. Zhou]{Ningkui Sun$^\dag$, Bendong Lou$^{\ddag, *}$ and Maolin Zhou$^\sharp$}
\thanks{$\dag$ School of Mathematical Science, Shandong Normal University, Jinan 250014, China.}
\thanks{$\dag$ Mathematics \& Science College, Shanghai Normal University, Shanghai 200234, China.}
\thanks{$\sharp$ School of Science and Technology, University of New England, Armidale, NSW 2351, Australia.}
\thanks{{\bf Emails:} {\sf sunnk1987@163.com} (N. Sun), {\sf bendonglou@sina.com} (B. Lou), {\sf mzhou6@une.edu.au} (M. Zhou)}
\thanks{$*$ Corresponding author.}
\date{}

\begin{abstract}
We consider a reaction-diffusion-advection equation of the form: $u_t=u_{xx}-\beta(t)u_x+f(t,u)$ for $x\in (g(t),h(t))$,
where $\beta(t)$ is a $T$-periodic function representing the intensity of the advection, $f(t,u)$ is a Fisher-KPP type
of nonlinearity, $T$-periodic in $t$, $g(t)$ and $h(t)$ are two free boundaries satisfying Stefan conditions. This equation can be used to
describe the population dynamics in time-periodic environment with advection. Its homogeneous version
(that is, both $\beta$ and $f$ are independent of $t$) was recently studied by Gu, Lou and Zhou \cite{GLZ}.
In this paper we consider the time-periodic case and study the long time behavior of the solutions. We show that
a vanishing-spreading dichotomy result holds when $\beta$ is small;
a vanishing-transition-virtual spreading trichotomy result holds when $\beta$ is a medium-sized function; all solutions vanish
when $\beta$ is large. Here the partition of $\beta(t)$ is much more complicated than the case when $\beta$ is a
real number, since it depends not only on the \lq\lq size" $\bar{\beta}:= \frac{1}{T}\int_0^T \beta(t) dt$
of $\beta(t)$ but also on its \lq\lq shape" $\tilde{\beta}(t) := \beta(t) - \bar{\beta}$.
\end{abstract}

\subjclass[2010]{35K20, 35K55, 35R35, 35B40}
\keywords{reaction-diffusion-advection equation, Fisher-KPP equation, free boundary problem,
time-periodic environment, long time behavior, spreading phenomena}
\maketitle


\section{Introduction}\label{sec:intr}

The study of spreading processes by using reaction diffusion equations traces back to the pioneering works of
Fisher \cite{Fisher}, and Kolmogorov, Petrovski and Piskunov \cite{KPP}. They introduced the equation $u_t = u_{xx} + u(1-u)$
to model the spread of advantageous genetic trait in a population, and studied traveling wave solutions of the form
$u(t,x)=\phi(x-ct)$. In 1970s', Aronson and Weinberger \cite{AW1, AW2} gave a systematic investigation on the Cauchy problem
of $u_t= u_{xx} +f(u)$. In particular, when $f$ is a monostable type of nonlinearity like $u(1-u)$, they proved the so-called
{\it hair-trigger effect}, which says that {\it spreading} always happens (i.e. $\lim_{t\to \infty} u(t,x)=1$) for the solution
starting from any nonnegative and compactly supported initial data (no matter how small it is). Furthermore, they showed that
the traveling wave with minimal speed can be used to characterize the spreading of a species.

In this paper we consider the population dynamics in time-periodic advective environments, which means that the spreading
of a species is affected by a time-periodic advection. In the field of ecology, organisms can often sense and respond to
local environmental cues by moving towards favorable habitats, and these movement usually depend
upon a combination of local biotic and abiotic factors such as stream, climate, food and predators.
For example, some diseases spread along the wind direction. More examples can be found in \cite{GLZ} and references therein.
From a mathematical point of view, to involve the influence of advection, one of the simplest but probably still realistic
approaches is to assume that species can move up along the gradient of the density. The equation $u_t = u_{xx} -\beta(t) u_x +f(t,u)$
is such an example. Note that, in a moving coordinate frame $y :=x - \int_0^t \beta (s)ds$, this equation reduces to one without
advection: $w_t = w_{yy} + f(t,w)$ for $w(t,y)= u(t,x)$. Hence, for the Cauchy problem, there is nothing new in mathematics
to be studied. This paper considers the equation in a variable domain with free boundaries. In most spreading processes in
the natural world, a spreading front can be observed. In one space dimension case, if the species initially occupies an interval
$(-h_0, h_0)$, as time $t$ increases from 0, it is natural to expect the end points of the habitat evolve into two spreading fronts:
$x=g(t)$ on the left and $x=h(t)$ on the right. To determine how these fronts
evolve with time, we assume that they invade at a speed that is proportional to the spatial gradient of the density function $u$
there, which gives the following free boundary problem
\begin{equation}\label{p}
\left\{
\begin{array}{ll}
 u_t =u_{xx}- \beta (t) u_{x}+f(t,u), &  g(t)< x<h(t),\; t>0,\\
 u(t,g(t))=0,\ \ g'(t)=-\mu (t) u_x(t, g(t)), & t>0,\\
 u(t,h(t))=0,\ \ h'(t)=-\mu (t) u_x  (t, h(t)) , & t>0,\\
-g(0)=h(0)= h_0,\ \ u(0,x) =u_0 (x),& -h_0\leqslant x \leqslant h_0,
\end{array}
\right.
\tag{$P$}
\end{equation}
where $\mu,\, \beta$ and $f$ are $T$-periodic functions in time $t$, $h_0>0$ and $u_0$ is a nonnegative function with support in
$[-h_0,h_0]$. We remark that this problem can be deduced as a spatial segregation limit of competition systems
(cf. \cite{HHP}). Namely, when we take the singular limit as the competition parameter goes to infinity in certain
competition systems, free boundaries will appear which separate a competitor from another and evolve
according to a law like that in \eqref{p}. On the other hand, the (Stefan) free boundary conditions in \eqref{p}
can also be derived from the Fick's diffusion law (cf. \cite{BDK}).

When $\beta \equiv 0$, that is, there is no advection in the environment, Du and Lin \cite{DuLin} studied
the problem \eqref{p} for the case where $\mu(t)=const.$ and $f(t,u)= u(a-bu)$ ($a,\, b>0$ are constants).
They proved that, if $h_0$ is small, then spreading happens (i.e., $h(t), -g(t)\to \infty$ and $u(t,\cdot)\to a/b$
as $t\to \infty$ locally uniformly in $\R$) when $\mu$ is large, and vanishing happens (i.e., $h(t)-g(t)$ is bounded
and $u(t,\cdot)\to 0$ uniformly as $t\to \infty$) when $\mu$ is small. The vanishing phenomena is a remarkable
result since it shows that the presence of free boundaries makes spreading difficult and the
{\it hair-trigger effect} can be avoided for some initial data with narrow support.
Later, some authors considered the problem in time dependent environments. Among them,
Du, Guo and Peng \cite{DGP} considered the time-periodic problem, and Li, Liang and Shen
\cite{LLS1} considered the almost time-periodic problem, both for the environments without advection
(i.e., $\beta\equiv 0$).

When $\beta\not\equiv 0$, that is, there is an advection in the environment, some special cases
of \eqref{p} were studied by some authors.
Among them, Gu, Lou and Zhou \cite{GLZ} gave a rather complete description
for the long time behavior of the solutions to the homogeneous version of \eqref{p}.
When $f$ is a Fisher-KPP type of nonlinearity like $f(u)=u(1-u)$, they found that, besides the
minimal speed $c_0 := 2\sqrt{f'(0)}$ of traveling waves, there is another
important parameter $\beta^*>c_0$ which
affects the dynamics of the solutions significantly. More precisely,
\begin{itemize}
\item[(i)] in small advection case $\beta\in [0,c_0)$, there is a dichotomy result
(cf. \cite[Theorem 2.1]{GLZ}): either \emph{vanishing} or \emph{spreading}
happens for a solution;

\item[(ii)]  in medium-sized advection case $\beta \in [c_0, \beta^*)$,
there is a trichotomy result (cf. \cite[Theorem 2.2]{GLZ}): either \emph{vanishing} happens,
or \emph{virtual spreading} happens (which means that, as $t\to \infty$, $g(t)\to g_\infty >-\infty,\ h(t)\to \infty$,
$u(t,\cdot)\to0$ locally uniformly in $(g_\infty, \infty)$ and $u(t,\cdot+c_1 t)\to1$ locally uniformly
in $\R$ for some $c_1 >\beta -c_0$), or the solution is a \emph{transition} one in the sense that,
when $\beta \in (c_0, \beta^*)$, $u(t, \cdot+o(t))$ converges to a {\it tadpole-like traveling semi-wave}
$\mathcal{V}(\cdot-(\beta-c_0)t)$, and that, when $\beta = c_0$, $u(t,\cdot)\to 0$ uniformly
in $[g(t),h(t)]$ with $h(t)\to \infty$;

\item[(iii)] in large advection case $\beta \geqslant \beta^*$,
\emph{vanishing} happens for all the solutions  (cf. \cite[Theorem 2.4]{GLZ}).
\end{itemize}

In this paper we study the problem \eqref{p} with time-periodic coefficients. Throughout this paper, we
assume that $T>0$ is a given constant and use the following notation:
$$
\begin{array}{l}
\PP:= \{p(t)\in C^{\nu/2} ([0,T])  : p(0) =p(T)\} \mbox{ for some } \nu\in (0,1);\\
\mbox{for each } p\in \PP,\ \ \mbox{denote } \bar{p}:= \frac{1}{T} \int_0^T p(t) dt \mbox{ and } \tilde{p}(t) := p(t) -\bar{p};\\
\PP^0 := \{ p\in \PP  : \bar{p} =0\},\quad \PP_+ := \{p\in \PP  : p(t)>0 \mbox{ for all } t\in [0,T] \}.
\end{array}
$$
Our basic assumptions is the following
$$
(H_0)\  \hskip 16mm
\left\{
 \begin{array}{l}
  \beta\in \PP \mbox{ with } \bar{\beta} \geqslant 0; \ \ \mu \in \PP_+ ;\ f(t,0)\equiv 0;\\
  f(t,u)\in C^{\nu/2, 1+\nu/2}([0,T]\times \R) \mbox{ for some } \nu \in (0,1),\ T\mbox{-periodic in } t;\\
  a(t):= f_u (t,0)\in \PP_+; \mbox{ and for any } t\in [0,T],  f(t,u)<0 \mbox{ for } u>1,\\
     f(t,u)/u \mbox{ is strictly decreasing in } u >0.
  \end{array}
 \right. \ \hskip 15mm \hfill
$$
This condition implies that $f(t,u)$ is positive for small $u$, negative for $u>1$ and $f(t,u)\leqslant  a(t) u$.
Hence $f(t,u)$ is a Fisher-KPP type of nonlinearity. Typical example of such $f$ is $f=u(a(t) -b(t)u)$ for some
$a,\, b\in \PP_+$. Note that the assumption $\bar{\beta}\geqslant 0$ is not an essential one,
since this assumption is used only to indicate that the advection makes the rightward motion
easier than the leftward one.
In the converse case: $\bar{\beta}<0$, all the conclusions in this paper remain valid as long as the right
and the left directions are exchanged.

We now sketch the influence of the advection intensity $\beta$ on the spreading of the species. For this purpose
we need three special solutions (see details in Section \ref{prelis}). (1) The unique positive {\it $T$-periodic solution}
$P(t)$ of the ODE $u_t = f(t,u)$. (2) The {\it periodic traveling wave}
$Q\big(t,x+\bar{c}t-\int_0^t \beta(s)ds \big)$ of \eqref{p}$_1$ (hereafter, we use \eqref{p}$_1$ to denote
the equation in \eqref{p}), where $\bar{c}:=2 \sqrt{\bar{a}}$ and $Q(t,z)$ denotes the unique solution of
\begin{equation}\label{p of Q}
\left\{
 \begin{array}{l}
v_t = v_{zz} - \bar{c} v_z +f(t,v) \mbox{ for } t, z\in \R, \\
v(t,-\infty)=0,\  v(t,\infty)= P(t) \mbox{ and }
v (0,0)= \frac12 \min_{t\in [0,T]}P(t).
\end{array}
\right.
\end{equation}
(3) {\it Periodic rightward traveling semi-wave} $U \big(t,R(t)- x\big)$, which is defined by the solution to the problem
\begin{equation}\label{p-sw-k}
\left\{
\begin{array}{ll}
U_t = U_{zz} + [\beta -r] U_z + f(t,U), &  t\in [0,T],\ z>0,\\
U(t,0)=0,\ \ U(t, \infty)= P(t),  & t\in [0,T],\\
U(0,z) = U(T,z),\ U_z(t,z)>0, & t\in [0,T],\ z \geqslant 0,\\
r(t) = \mu(t) U_z(t,0), & t\in [0,T].
\end{array}
\right.
\end{equation}
In Section \ref{prelis}, we will see that when $\beta\in\mathcal{P}$ satisfies $\bar{\beta} \geqslant 0$, the problem \eqref{p-sw-k} has a unique solution pair $(r,\, U)$ with $r =r(t;\beta)\in \PP_+$.
Then, with $R(t):=\int_0^t r(s;\beta)ds$, the function $u(t,x)=U (t,R(t)-x)$ satisfies \eqref{p}$_1$, $u(t,R(t))=0$ and $R'(t)=-\mu(t)u_x(t,R(t))$. As in \cite{DuLou}, we call $u=U(t,R(t)-x)$ a periodic rightward traveling semi-wave since it is
defined only for $x\leqslant  R(t)$ and $U(t,z)$ is periodic in $t$.

The long time behavior of the solutions of \eqref{p} is quite different when $\beta$ is a small,
or a medium-sized, or a large function. The partition for $\beta(t)$, however,
is much more complicated than the homogeneous case (cf. \cite{GLZ}) since
not only the ``size" $\bar{\beta}$ but also the ``shape" $\tilde{\beta}$ is involved.
According to our study we find that $\tilde{\beta}$ and $\bar{\beta}$ should be considered separately.
In fact, for each given ``shape" $\theta\in \PP^0$, if we consider only $\beta$ with the ``shape" $\theta$
(namely, consider $\beta = b+\theta$ for $b\in [0,\infty)$, hence $\bar{\beta} =b$ and $\tilde{\beta} =\theta$),
then we will show in Section 3 that there exist two critical values
for $b$. The first one is $\bar{c} := 2\sqrt{\bar{a}}$ (independent of the ``shape" $\theta$),
and $\bar{c} +\theta$ is the function partitioning small $\beta$ and medium-sized $\beta$ in the set
$\{\beta = b+\theta : b\geqslant 0\}$.
The second critical value for $b$ is $B(\theta)$, which depends on $\theta$ and is bigger than $\bar{c}$, and
$\beta^* := B(\theta) +\theta$ is the function partitioning medium-sized $\beta$ and large $\beta$ in the set
$\{\beta = b+\theta : b\geqslant 0\}$.
Here $B(\theta)$ is the unique zero of the increasing function $y(b):= b -\bar{c} -\overline{r(t;b+\theta)}$
in $[0,\infty)$. Therefore, with $\bar{r} =\ov{r(t;\beta)}$, we have
$$
\bar{\beta} -\bar{c} < \bar{r} \mbox{ when } \bar{\beta} < B(\tilde{\beta}),\quad
\bar{\beta} -\bar{c} = \bar{r} \mbox{ when } \bar{\beta} = B(\tilde{\beta}) \mbox{ and }
\bar{\beta} -\bar{c} > \bar{r} \mbox{ when } \bar{\beta} > B(\tilde{\beta})
$$
(see Lemmas \ref{lem:B(theta)} and \ref{lem:property of beta*} for details.)

The long time behavior of the solutions to \eqref{p} depends on the signs of
$\bar{\beta}-\bar{c}$ and $\bar{\beta}- B(\tilde{\beta})$.

{\it Case 1: $\bar{\beta}\in[0,\bar{c})$}. In this case the periodic traveling wave $Q\big(t,x+\bar{c}t-\int_0^t\beta(s)ds\big)$ moves leftward. This indicates that spreading of the solution on
the left side is possible for solutions starting from large initial data. Therefore {\it spreading} (i.e., $u\to P(t)$ as $t\to\infty$)
may happen in this case (see Theorem \ref{thm:small beta} below).

{\it Case 2: $\bar{\beta} \geqslant  \bar{c}$}. To explain the influence
of $\beta$ intuitively, we consider a solution of \eqref{p} with
a {\it front} (i.e., a sharp decreasing part) on the right side
and a {\it back} (i.e., a sharp increasing part) on the left side. As can be expected,
when $t\gg 1$ the front $\approx U(t, R(t)-x)$ and it moves rightward with speed $\approx \overline{r(t;\beta)}$,
the back $\approx Q\big(t,x+\bar{c}t -\int_0^t \beta(s) ds \big)$ and it also moves rightward with
speed $\approx \bar{\beta} -\bar{c}$.
The latter indicates that $u\to 0$ in $L^\infty_{loc}$ topology.

{\it Subcase 2.1: $\bar{c} \leqslant \bar{\beta} < B(\tilde{\beta})$}. In this case we have
$\bar{\beta}-\bar{c }< \overline{r(t;\beta)}$, that is, the front moves rightward faster than the back.
Hence the solution have enough space between the back and the front to grow up.
In fact we will show that, in $L^\infty_{loc}$ topology, $u(t,x-c_1 t)\to P(t)$ as $t\to \infty$ for
some $c_1>\bar{\beta}-\bar{c}$, though $u(t,x)\to0$ locally uniformly.
We will call this phenomenon as {\it virtual spreading}.

{\it Subcase 2.2: $\bar{\beta} \geqslant  B(\tilde{\beta})$}. In the special case where $\bar{\beta}
> B(\tilde{\beta})$ we have $\bar{\beta}-\bar{c} >\overline{r(t;\beta)}$, that is, the back moves rightward faster than the front. So the solution is suppressed by its back, and $u\to 0$ uniformly. This is called {\it vanishing} phenomenon.
In the critical case where $\bar{\beta} =  B(\tilde{\beta})$ we have the same conclusion by a
more delicate approach.

Finally we remark that, if the equation in \eqref{p} is replaced
by a more general one:
$$
u_t = d(t) u_{xx} -\beta(t) u_x + f(t,u),
\quad g(t) <x<h(t),\ t>0,
$$
where $d\in \PP_+$, then by taking a new time
variable $\tau = D(t):=\int_0^t  d(s)ds$, we see that the function $v(\tau, x) := u(D^{-1}(\tau),x)$
solves a problem like \eqref{p}. In particular, the coefficient of the diffusion term in the equation of
$v$ is $1$. Therefore the argument in this paper applies for such a general equation.

This paper is organized as the following. In Section 2 we present our main results.
In Section 3 we construct several kinds of traveling waves and give an equivalent
description for the set $\{B(\theta) +\theta : \theta \in \PP^0 \}$ of the second critical functions.
In Section 4 we
study the long time behavior for the solutions and prove Theorems 2.1, 2.2 and 2.3.
In Section 5 we consider the asymptotic profiles for (virtual) spreading solutions
and prove Theorem 2.4.

\section{Main Results}\label{semar}

Throughout this paper we choose the initial data $u_0$ from the
following set.
\begin{equation}\label{def:X}
\mathscr {X}(h_0):= \Big\{ \phi \in C^2 ([-h_0,h_0]) :
\phi(-h_0)= \phi (h_0)=0,\; \phi(x) \geqslant ,\not\equiv 0 \
\mbox{in } (-h_0,h_0).\Big\}
\end{equation}
where $h_0 >0$ is a real number. By a similar argument as in \cite{DuLin,DuLou}, one can show that, for any $h_0 >0$
and any initial data $u_0\in\mathscr{X}(h_0)$, the problem \eqref{p} has a time-global
solution $(u,g,h)$, with $u\in C^{1+\nu/2,2+\nu }((0,\infty) \times[g(t),h(t)])$ and $g,\, h\in C^{1+\nu/2}((0,\infty))$
for the number $\nu$ in $\HH$. Moreover, it follows from the maximum principle
that, when $t>0$, the solution $u$ is positive in $(g(t),h(t))$,
$u_x(t,g(t))>0$ and $u_x(t,h(t))<0$, thus $g'(t)<0<h'(t)$ for all
$t>0$. Denote
$$
g_{\infty}:=\lim_{t\to\infty}g(t),\quad h_{\infty}:=
\lim_{t\to\infty}h(t)\quad \mbox{and} \quad I_{\infty}:=(g_{\infty},h_{\infty}).
$$
Now we list some possible situations on the asymptotic behavior of the solutions to \eqref{p}.
\begin{itemize}
\item {\it spreading} : $I_\infty =\R$ and
$\lim_{t\to\infty} \ [u(t,\cdot)-P(t)] =0$
locally uniformly in $\R$;

\item {\it vanishing} :  $I_\infty$ is a bounded interval and
$\lim_{t\to\infty} \|u(t,\cdot)\|_{L^\infty ([g(t),h(t)])}=0$;

\item {\it virtual spreading} : $g_\infty>-\infty$, $h_\infty=+\infty$, $\lim_{t\to\infty}u(t,\cdot)=0$
locally uniformly in $(g_\infty, \infty)$ and, for some $c_1 >0$, $\lim_{t\to\infty} \ [u(t,\cdot+ c_1 t) -P(t)] =0$
locally uniformly in $\R$.
\end{itemize}

Our first main result deals with the small advection case.

\begin{thm}\label{thm:small beta}
Assume $\HH$ and $0\leqslant  \bar{\beta}<\bar{c}$. Let $(u,g,h)$ be a time-global solution of \eqref{p} with $u_0=\sigma\phi$ for some
$\phi\in \mathscr {X}(h_0)$ and $\sigma\geqslant 0$. Then there exists $\sigma^*=\sigma^*
(h_0, \phi, \beta)\in [0,\infty ]$ such that {\rm vanishing}
happens when $\sigma\in [0,\sigma^*]$ and {\rm spreading} happens when $\sigma>\sigma^*$.
\end{thm}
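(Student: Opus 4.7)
The plan is to follow the standard sharp-threshold strategy for Fisher-KPP free boundary problems. For $\sigma \geqslant 0$ denote the solution to \eqref{p} with initial data $\sigma\phi$ by $(u^\sigma, g^\sigma, h^\sigma)$, and set
$$
\Sigma_v := \{\sigma \geqslant 0 : \mbox{vanishing happens for } u^\sigma\},\quad
\Sigma_s := \{\sigma \geqslant 0 : \mbox{spreading happens for } u^\sigma\},
$$
with $\sigma^* := \sup \Sigma_v$ (taking $\sigma^* = \infty$ if $\Sigma_s = \emptyset$). First I would establish strict monotonicity: the comparison principle for the free boundary problem (as in \cite{DuLin,DuLou}) yields that for $\sigma_1 < \sigma_2$ one has $g^{\sigma_1}(t) > g^{\sigma_2}(t)$, $h^{\sigma_1}(t) < h^{\sigma_2}(t)$ and $u^{\sigma_1} < u^{\sigma_2}$ on $[g^{\sigma_1}(t),h^{\sigma_1}(t)]$ for all $t>0$. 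Consequently both $\Sigma_v$ and $\Sigma_s$ are intervals (one downward-closed, the other upward-closed in $[0,\infty)$).

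Second, and this is the analytic core of the argument, I would prove a \emph{dichotomy lemma}: any time-global solution of \eqref{p} either vanishes or spreads, with no third option when $\bar\beta < \bar c$. The standard approach is to show that if $h_\infty - g_\infty = \infty$, then vanishing is impossible: by a principal-eigenvalue argument the linearization $\varphi_t = \varphi_{xx} - \beta \varphi_x + a(t)\varphi$ on a sufficiently long interval has negative principal periodic eigenvalue, so $u^\sigma$ cannot decay; combined with $u \leqslant P(t)+o(1)$ (from $\HH$) and a parabolic compactness argument for the shifted functions $u(t + nT,\cdot)$, one obtains $u(t,\cdot) \to P(t)$ on every bounded subinterval of $I_\infty$. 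Crucially, the assumption $\bar\beta < \bar c$ is used to rule out virtual spreading: were $g_\infty > -\infty$ while $h_\infty = \infty$, a comparison with the periodic traveling wave $Q(t, x + \bar c t - \int_0^t \beta(s)\,ds)$, which moves leftward with speed $\bar\beta - \bar c < 0$, would eventually push the left front $g(t)$ to $-\infty$, a contradiction. Hence spreading forces $I_\infty = \R$.

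Third, I would verify $\Sigma_v \neq \emptyset$ and the closedness $\sigma^* \in \Sigma_v$. For small $\sigma$, standard Stefan-estimate arguments (constructing a supersolution of the form $C\sigma e^{-\lambda t}\psi(x)$ on a slightly enlarged fixed interval $[-h_0 - \delta, h_0 + \delta]$, with $\psi$ a suitable eigenfunction) show that $u^\sigma$ decays to zero and $h^\sigma - g^\sigma$ stays bounded, so $\sigma \in \Sigma_v$. Closedness of $\Sigma_v$ at $\sigma^*$ follows from monotone convergence: if $\sigma_n \uparrow \sigma^*$ with $\sigma_n \in \Sigma_v$, then $h^{\sigma_n}_\infty \uparrow L < \infty$ (since $h^{\sigma^*}$ is an upper bound via the dichotomy together with continuous dependence on initial data), and passing to the limit gives $h^{\sigma^*}_\infty \leqslant L$, hence $\sigma^* \in \Sigma_v$.

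The main obstacle will be the dichotomy lemma itself, in particular the exclusion of virtual spreading and transition behavior in the small-advection regime. The subtle point is handling the rightward front $h(t)$: one must show that if vanishing fails the rightward traveling semi-wave $U(t, R(t)-x)$ of \eqref{p-sw-k} can be used as a lower barrier to force $h_\infty = \infty$, while simultaneously the leftward motion of $Q$ forces $g_\infty = -\infty$. A zero-number / intersection-counting argument in the spirit of \cite{DuLou}, adapted to the time-periodic setting, should make this precise; I expect this will be carried out in detail in Section 4 of the paper.
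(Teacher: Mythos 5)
Your overall strategy---threshold-style argument via monotonicity in $\sigma$, a dichotomy lemma excluding intermediate behavior, and separate sufficient conditions for vanishing and spreading---matches the paper's approach (which routes through Lemmas 4.1--4.3 and then follows \cite[Theorem~5.2]{DuLou}). However, there is a genuine gap in the step you flag as ``the analytic core'': your proposed tool for ruling out the scenario $g_\infty>-\infty$, $h_\infty=\infty$ does not work as stated.

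You propose to compare $u$ with the full periodic traveling wave $Q\bigl(t,x+\bar c t - \int_0^t\beta\bigr)$ to push $g(t)\to-\infty$. But $Q$ is strictly positive on all of $\R$, whereas $u(t,g(t))=0$; hence on the free boundary one has $u<Q$, and $Q$ cannot serve as a subsolution on the variable domain $(g(t),h(t))$. The paper avoids this by a different device (Lemma~\ref{lemvansmall}, proof of (i)$\Rightarrow$(ii)): fix a time $t_1$ with $h(t_1)-g(t_1)>\ell^*(-\beta,a)$, and consider the auxiliary problem \eqref{subso} with \emph{fixed} right endpoint $h(t_1)$ and a single free left boundary $\xi(t)$, which is a lower solution of \eqref{p}. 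If $g_\infty>-\infty$ then $\xi(\infty)\ge g_\infty>-\infty$, but the auxiliary solution then converges to the positive $T$-periodic solution $U_0(t,\cdot\,;-\beta,\ell)$ on an interval of length $\ell>\ell^*$, forcing $\liminf_{t\to\infty}\xi'(t)<0$, a contradiction. When compactly supported barriers are needed (e.g.\ to show $g(t)\to-\infty$ for spreading solutions in Lemma~\ref{lem:rough gh}), the paper uses the \emph{bounded-support} traveling waves $W^\epsilon_l(t,x)=U_0(t,x+L^\epsilon(t);-\beta-l^\epsilon,\ell)$ from Proposition~\ref{prop:tw compact large}(iii), not $Q$ itself, precisely to sidestep the boundary incompatibility you would face. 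So replace your $Q$-comparison either by the auxiliary half-fixed-boundary problem or by the compactly supported periodic wave $W^\epsilon_l$; the rest of your outline (monotonicity, the eigenvalue threshold $\ell^*$, the smallness-of-$u_0$ vanishing lemma, closedness of $\Sigma_v$) is in line with Lemmas~\ref{lemvansmall}--\ref{lemuto1}.
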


When the advection is a medium-sized one we have the following trichotomy result.

\begin{thm}\label{thm:middle beta}
Assume $\HH$ and $\bar{c} \leqslant \bar{\beta}<B(\tilde{\beta})$. 
Let $(u,g,h)$ be a time-global solution of \eqref{p} with $u_0=\sigma \phi$ for some
$\phi\in \mathscr {X}(h_0)$ and $\sigma\geqslant 0$. Then there exist $\sigma_*=\sigma_*(h_0, \phi,\beta)$
and $\sigma^*=\sigma^* (h_0, \phi, \beta)$ with $0< \sigma_*\leqslant  \sigma^* \leqslant  \infty$
such that
\begin{itemize}
\item[(i)] {\rm vanishing} happens when $\sigma\in [0,\sigma_*)$;

\item[(ii)] {\rm virtual spreading} happens when $\sigma > \sigma^*$;

\item[(iii)] in the {\rm transition} case $\sigma \in [\sigma_*, \sigma^*]$:
$g_\infty >-\infty$, $h_\infty =\infty$, $u(t,\cdot) \to 0$ as $t\to \infty$ locally uniformly
in $(g_\infty, \infty)$, and virtual spreading does not happen.
\end{itemize}
\end{thm}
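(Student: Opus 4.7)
The plan follows the standard $\sigma$-monotonicity dichotomy. Writing $(u^\sigma,g^\sigma,h^\sigma)$ for the solution with initial datum $\sigma\phi$, let
\[
\Sigma_v := \{\sigma\geqslant 0 : \text{vanishing occurs}\},\qquad
\Sigma_{vs}:=\{\sigma\geqslant 0 : \text{virtual spreading occurs}\}.
\]
The strong comparison principle for \eqref{p} makes $u^\sigma,\,g^\sigma,\,h^\sigma$ monotone in $\sigma$, so $\Sigma_v$ is downward-closed and $\Sigma_{vs}$ is upward-closed. I will (i) show that $\Sigma_v$ contains an interval $[0,\sigma_0)$, (ii) show that $\Sigma_{vs}$ contains $(M,\infty)$, (iii) show that both sets are relatively open in $[0,\infty)$, and (iv) analyze the residual set $[\sigma_*,\sigma^*]$ with $\sigma_*:=\sup\Sigma_v$ and $\sigma^*:=\inf\Sigma_{vs}$.

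For (i), I linearize at $u=0$ and use the fact that the periodic-parabolic principal eigenvalue of $L_\beta w := w_t-w_{xx}+\beta(t)w_x-a(t)w$ with Dirichlet data is positive on sufficiently short intervals; a positive eigenfunction, scaled to dominate $\sigma\phi$, yields a decaying supersolution whose support never reaches the Stefan boundary of the enlarged interval, so $u^\sigma$ vanishes. For (ii), the hypothesis $\bar{\beta}<B(\tilde{\beta})$ gives $\overline{r(t;\beta)}>\bar{\beta}-\bar{c}$ by Lemma \ref{lem:property of beta*}, so the rightward semi-wave $U(t,R(t)-x)$ strictly outruns the rightward-moving periodic wave $Q(t,x+\bar{c}t-\int_0^t\beta(s)\,ds)$. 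Gluing a truncated piece of $Q$ (on the left) to a truncated piece of $U$ (on the right) produces a compactly supported subsolution whose wake converges to $P(t)$ on a window whose width grows linearly in $t$; if $\sigma$ is large, $\sigma\phi$ dominates a time-shift of this subsolution, forcing virtual spreading with some $c_1\in(\bar{\beta}-\bar{c},\overline{r(t;\beta)})$. Openness in (iii) follows from continuous dependence on $\sigma$ together with the stability of the barriers of (i) and (ii) under small perturbations at a finite time.

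The heart of the proof is (iv). Fix $\sigma\in[\sigma_*,\sigma^*]$; vanishing and virtual spreading are ruled out by (iii). To prove $h_\infty=\infty$, assume for contradiction $h_\infty<\infty$; standard parabolic bootstrap on the asymptotically fixed interval forces the $\omega$-limit to consist of $T$-periodic Dirichlet solutions of \eqref{p}$_1$, and the strict monotonicity of $f(t,u)/u$ together with the positivity of the principal eigenvalue on subcritical intervals forces these to be zero, contradicting non-vanishing. To prove $g_\infty>-\infty$, I compare $u^\sigma$ from below with a truncated, rightward-drifting profile built from $Q$: since $Q$ travels rightward with speed $\bar{\beta}-\bar{c}\geqslant 0$, if $g(t)\to-\infty$ then $u^\sigma$ would dominate such a profile on an ever-widening set and thereby satisfy the virtual-spreading subsolution of (ii), contradicting $\sigma\leqslant\sigma^*$. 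Local convergence $u(t,\cdot)\to 0$ on $(g_\infty,\infty)$ then follows from a zero-number/sliding argument: a nonzero local $\omega$-limit would again produce a semi-wave-like subsolution forcing virtual spreading.

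The main technical obstacle is the critical subcase $\bar{\beta}=\bar{c}$. Here $Q$ is stationary in its moving frame, so the front-versus-back separation in (ii) and the lower-bound argument for $g_\infty$ in (iv) both degenerate to zero leading order and must be rebuilt at sublinear scales. This requires exploiting the strict spatial monotonicity of $Q(t,\cdot)$, the strict positivity of $\overline{r(t;\beta)}$, and a careful adaptation of the refined semi-wave barriers of \cite{GLZ} to the present time-periodic setting; this is where the fact that $\beta$, $\mu$ and $f$ are genuinely time-periodic (rather than constant) makes the estimates noticeably more delicate than in the autonomous problem.
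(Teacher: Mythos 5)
Your overall $\sigma$-trichotomy scaffolding (sets $\Sigma_v$, $\Sigma_{vs}$, monotonicity in $\sigma$, openness, analysis of the residual interval) matches the paper's strategy, and step (i) (vanishing for small data via a periodic-parabolic eigenfunction barrier) is correct in outline — though you elide the key Stefan-containment step that the paper flags as the real difficulty in Lemma \ref{vfsma}, namely constructing a \emph{moving} eigenfunction profile so that the free boundary cannot escape the barrier's support. However, there are two substantive problems with the rest.

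First, you claim in (ii) that $\Sigma_{vs}\supset (M,\infty)$, i.e.\ $\sigma^*<\infty$. The theorem deliberately allows $\sigma^*=\infty$ and the paper's proof explicitly treats the case $\Sigma_1=\emptyset$; nowhere is nonemptiness of $\Sigma_{vs}$ established. Your ``glue a truncated $Q$ to a truncated $U$'' construction is also not what the paper does and would need serious work to produce a genuine subsolution of the Stefan problem (smoothness and the differential inequality at the glue point, and the right-end speed inequality $r^\epsilon<-\mu\,W^\epsilon_x$). The paper instead builds its compactly supported traveling lower solutions $W^\epsilon$ as exact periodic Dirichlet solutions on a large finite interval (Proposition \ref{prop:tw compact large}) and uses them only to \emph{characterize} virtual spreading (Lemma \ref{mvirtsp}), yielding openness of $\Sigma_1$ without asserting $\Sigma_1\neq\emptyset$. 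If you want the stronger statement, you must actually prove it; as written it is unsupported.

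Second, your treatment of the transition case (iv) is both circular and, at one point, wrong. Your proof of $h_\infty=\infty$ begins ``standard parabolic bootstrap on the asymptotically fixed interval,'' which presupposes $g_\infty>-\infty$; but you prove $g_\infty>-\infty$ afterwards, so the order must be reversed. Worse, your argument for $g_\infty>-\infty$ --- ``if $g(t)\to-\infty$ then $u$ would dominate a rightward-drifting $Q$-profile on an ever-widening set'' --- does not follow: enlargement of the domain on the left gives no lower bound whatsoever on $u$ there, so no domination and no contradiction. The paper's Lemma \ref{lcr} proves $g_\infty>-\infty$ for \emph{every} solution once $\bar\beta\geqslant\bar c$, by a hard estimate: the upper bound $u(t,x)\leqslant Q_M\bigl(t,x+\bar c\,t-\int_0^t\beta\bigr)$ together with the Bramson-type decay estimate \eqref{uss11} give $u\leqslant C t^{-5/4}$ on $[g(t),K]$, and then a moving eigenfunction upper solution shows $g(t)$ is bounded below. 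With Lemma \ref{lcr} in hand, $h_\infty=\infty$ in the transition case is immediate (Remark \ref{rem:h finite u to 0}: $h_\infty<\infty$ would give a bounded habitat on which $u\to0$, i.e.\ vanishing), and the local convergence $u\to0$ on $(g_\infty,\infty)$ is also Lemma \ref{lcr}(i). In short, (iv) needs the quantitative barrier of Lemma \ref{lcr}, not the soft argument you propose.
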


\noindent
In the transition case, it is clear by the values of $g_\infty$ and $h_\infty$ that neither vanishing nor
spreading happens. In fact, in the homogeneous case (that is, all the coefficients in \eqref{p} are
independent of $t$), \cite{GLZ} proved that any transition solution converges to $\mathcal{V}
(x-(\beta-\bar{c})t +o(t))$, where $\mathcal{V}(x -(\beta-\bar{c})t)$ is a {\it tadpole-like traveling semi-wave}
whose profile has a big head and a boundary on the right side and an infinite long tail on the left side.
We guess that in the transition case in our theorem, the solution also converges to such a traveling wave
$\wh{\mathcal{V}}$, with time-periodic tadpole-like profile and with average speed $\bar{\beta}-\bar{c}$.
This problem remains open now. The main difficulty is to prove the existence of $\wh{\mathcal{V}}$ itself
whose profile is time-periodic in $t$ and non-monotone in the space variable.

When the advection is large, the long time behavior of the solutions is rather simple under an
additional condition:
$$
(H_1)\  \hskip 30mm   \alpha(t):= P(t) \cdot f_u(t, P(t)) -f(t,P(t)) < 0 \mbox{ for } t\in [0,T],
 \hskip 50mm
$$
where $P(t)$ is the periodic solution of $u_t = f(t,u)$.
A typical example of such $f$ is $f(t,u) = u(a(t) - b(t) u)$ with positive and periodic functions $a$ and $b$.
When $f=f(u)$ is a homogeneous Fisher-KPP type of nonlinearity with zeros $0$ and $1$, the condition $(H_1)$
reduces to $f'(1)<0$.

\begin{thm}\label{thm:large beta}
Assume $\HH$, $\HHH$ and $\bar{\beta} \geqslant B(\tilde{\beta})$. Let $(u,g,h)$ be a time-global
solution of \eqref{p} with initial data $u_0\in\mathscr {X}(h_0)$. Then vanishing happens.
\end{thm}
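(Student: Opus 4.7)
\medskip

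\noindent
\textbf{Proof proposal.}  The plan is to split the argument into the strict case $\bar{\beta}>B(\tilde{\beta})$ and the critical case $\bar{\beta}=B(\tilde{\beta})$.  In both, the key fact (from Lemma~\ref{lem:property of beta*}) is that the periodic traveling wave $Q\big(t,x+\bar{c}t-\int_0^t\beta(s)\,ds\big)$ moves rightward with average speed $\bar{\beta}-\bar{c}\geqslant \bar{r}$, where $\bar{r}=\overline{r(t;\beta)}$ is the average speed of the rightward semi-wave $U(t,R(t)-x)$.  The strategy is to dominate $u$ from above by the semi-wave (controlling the front) and simultaneously by $Q$ (controlling the back), and to use that the back catches up with the front.

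For the strict case, first obtain an upper bound on $h(t)$ by using the shifted semi-wave $U(t,R(t)+C_1-x)$ as a supersolution: picking $C_1>0$ so that $R(0)+C_1\geqslant h_0$ and $U(0,R(0)+C_1-x)\geqslant u_0(x)$ on $[-h_0,h_0]$, the free-boundary comparison principle yields $h(t)\leqslant R(t)+C_1$, hence $h(t)\leqslant \bar{r}\,t+O(1)$.  Next, pass to the variable $y:=x-\int_0^t(\beta(s)-\bar{c})\,ds$ so that $v(t,y):=u(t,x)$ satisfies the $Q$-equation $v_t=v_{yy}-\bar{c}\,v_y+f(t,v)$, and the transformed right boundary
\[
\wt h(t):=h(t)-\int_0^t(\beta(s)-\bar{c})\,ds\leqslant [\bar{r}-(\bar{\beta}-\bar{c})]\,t+O(1)\longrightarrow-\infty.
\]
Now use $Q(t,y-K)$ itself as a supersolution in the $y$-frame: for $K$ sufficiently negative one has $Q(0,y-K)\geqslant v(0,y)$ on $[-h_0,h_0]$, and since $Q$ solves the same PDE on $\R$ while being strictly positive at the points $\wt g(t),\wt h(t)$ where $v=0$, the parabolic maximum principle gives $v(t,y)\leqslant Q(t,y-K)$ on $(\wt g(t),\wt h(t))$ for every $t\geqslant 0$.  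Since $Q(t,z)$ decays exponentially to $0$ as $z\to-\infty$ uniformly in $t$, we obtain $\|v(t,\cdot)\|_\infty\leqslant Q(t,\wt h(t)-K)\to 0$ exponentially; parabolic boundary estimates then force $|g'|,|h'|$ to be integrable, so $g_\infty$ and $h_\infty$ are finite.

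In the critical case $\bar{\beta}=B(\tilde{\beta})$, one has $\bar{r}=\bar{\beta}-\bar{c}$, so $\wt h(t)$ is merely bounded above and the above squeeze fails at the critical speed; this is where hypothesis $\HHH$ becomes essential and constitutes the main obstacle.  The plan is to upgrade $h(t)\leqslant R(t)+C_1$ to $h(t)-R(t)\to-\infty$, after which the strict-case argument closes the proof.  Since $\HHH$ implies (via Floquet theory) that $P(t)$ is strictly linearly stable --- from $P\cdot f_u<f$ one gets $\overline{f_u(\cdot,P)}<0$ --- I would construct a time-periodic strict supersolution of the form $U(t,R(t)+C_1-x)-\ve\,\psi(t,x)$ with $\psi>0$ periodic and $\ve>0$ small, strictly improving the envelope by an amount that accumulates across periods; iterating then pushes $h(t)-R(t)\to-\infty$.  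The subtle point is the spectral analysis of the linearization around $U$ as $z\to\infty$ (where $U\to P(t)$), coupled with a sliding or zero-number argument to propagate the improvement through successive periods.
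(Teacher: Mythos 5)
Your strict case $\bar{\beta}>B(\tilde{\beta})$ is essentially the paper's argument: bound $h(t)\leqslant R(t)+O(1)$ by a shifted rightward semi-wave, then observe that in the $Q$-frame the right boundary recedes linearly, so the traveling-wave tail forces exponential decay of $u$. Two technical points, though. First, you compare with $Q$ itself, whose ceiling is $P(t)$; if $\|u_0\|_\infty>\max P$ this fails, and the paper instead builds a modified wave $Q_M$ for a Fisher-KPP nonlinearity $f_M$ with carrying capacity $M:=1+\|u_0\|_\infty$ (display \eqref{def:M}--\eqref{qqul}). Second, the bound $h(t)\leqslant R(t)+C_1$ is not a one-line semi-wave comparison: $U(0,R(0)+C_1-\cdot)$ also tops out at $P(0)$ and cannot dominate $u_0$ in general. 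The paper proves it (Remark \ref{rem:est h all beta}, Step~1 of Proposition \ref{pro:sigma01}) by multiplying the semi-wave by a factor $1+M'e^{-\delta t}$ and shifting the fictitious free boundary accordingly; the positive spectral gap $\delta$ for this comes from $\HHH$ via \eqref{fuzhu3}. So $\HHH$ is already essential in the strict case, not only in the critical one as you suggest.

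The critical case $\bar{\beta}=B(\tilde{\beta})$ is where the real gap is. You propose to upgrade $h(t)-R(t)=O(1)$ to $h(t)-R(t)\to-\infty$ by iterating periodic supersolutions that improve the envelope using the linear stability of $P$, but this is only a program: you acknowledge the sliding/spectral step is unresolved, and it is not at all clear that the per-period improvement accumulates without some additional mechanism. The paper does something entirely different and does not try to show $h(t)-R(t)\to-\infty$. It invokes the Bramson logarithmic delay for Fisher-KPP Cauchy solutions (cf.\ \cite{HNRR}, adapted in \eqref{usma11}--\eqref{uss11}): the genuine front of the dominating Cauchy solution sits at $Y(t)=(\bar{\beta}-\bar{c})t+\frac{3}{\bar{c}}\ln t+O(1)$, i.e.\ a logarithmic distance \emph{ahead} of the $O(t)$ position. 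Since $h(t)\leqslant R(t)+O(1)=(\bar{\beta}-\bar{c})t+O(1)$ at the critical value, the gap $Y(t)-h(t)\gtrsim\frac{3}{\bar{c}}\ln t\to\infty$, and the exponential profile in \eqref{uss11} then yields the polynomial bound $u(t,\cdot)\leqslant C''t^{-5/4}$, which is all that is needed to conclude $h_\infty<\infty$ (Lemma \ref{boundh}(ii), via a shifted eigenfunction upper solution). Without introducing this logarithmic correction, or completing your proposed iteration with a quantitative per-period gain, the critical case remains open in your write-up.
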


\medskip

Finally we consider the asymptotic profiles and speeds for the solutions when (virtual) spreading happens
as in Theorems \ref{thm:small beta} and \ref{thm:middle beta}. Roughly speaking, near the right boundary
$x=h(t)$, both the spreading solutions and the virtual spreading ones can be characterized by the periodic
rightward traveling semi-wave $U(t, R(t)-x)$. Near the left boundary $x=g(t)$, however, the behavior of a
spreading solution is different from that of a virtual spreading one. The former can be characterized near the left boundary by a
{\it periodic leftward traveling semi-wave} $\wh{U} (t,  x+L(t))$, where $L(t):=
\int_0^t l(s;\beta) ds$ for some $l\in \PP_+$ and $(l,\,\wh{U})$ solves the following problem
\begin{equation}\label{p-sw-k-left}
\left\{
\begin{array}{ll}
U_t = U_{zz} - [\beta +l] U_z + f(t,U), &  t\in [0,T],\ z>0,\\
U(t,0)=0,\ \ U(t, \infty)= P(t),  & t\in [0,T],\\
U(0,z) = U(T,z),\ U_z(t,z)>0 & t\in [0,T],\ z \geqslant 0,\\
l(t) = \mu(t) U_z(t,0), & t\in [0,T].
\end{array}
\right.
\end{equation}

\begin{thm}\label{thm:profile of spreading sol}
Assume $\HH$ and $\HHH$. Assume also that spreading happens for a solution of \eqref{p} as
in Theorem \ref{thm:small beta} or virtual spreading happens as in Theorem \ref{thm:middle beta}.
 Let $(r, U)$ be the unique solution of \eqref{p-sw-k}.
\begin{itemize}
  \item [(i)] When $0\leqslant  \bar{\beta} < \bar{c}$, let $(l,\wh{U})$ be the unique solution of \eqref{p-sw-k-left}.
  Then there exist $H_1, G_1\in\R$ such that
\begin{equation}\label{right spreading speed}
\lim\limits_{t\to\infty}[h(t)- R(t)] = H_1 ,\quad \lim\limits_{t\to\infty} [h'(t) -r(t;\beta)] = 0,
\end{equation}
\begin{equation}\label{left spreading speed}
\lim\limits_{t\to\infty}[g(t) + L(t)] = G_1 ,\quad \lim\limits_{t\to\infty} [g'(t)+ l(t;\beta)] = 0,
\end{equation}
where $R$ and $L$ are the integrals of $r$ and $l$, respectively. In addition, if we extend $U$, $\wh{U}$ to
be zero outside their supports we have
\begin{equation}\label{profile convergence 1}
\lim\limits_{t\to\infty} \left\| u(t,\cdot)- U(t,  R(t)+ H_1-\cdot) \right\|_{L^\infty ( [0, h(t)])}=0,\\
\end{equation}
and
\begin{equation}\label{profile convergence 1-left}
\lim\limits_{t\to\infty} \left\| u(t,\cdot)- \wh{U} \big(t, \cdot +L(t) - G_1 \big)\right\|_{L^\infty ([g(t), 0])} =0.
\end{equation}
  \item [(ii)] When $\bar{c} \leqslant  \bar{\beta} < B(\tilde{\beta})$, then \eqref{right spreading speed} holds and
\begin{equation}\label{profile convergence }
\lim\limits_{t\to\infty} \left\| u(t,\cdot)- U(t,  R(t)+ H_1-\cdot) \right\|_{L^\infty ( [c_1 t, h(t)])}=0,
\end{equation}
for any $c_1$ satisfying $\bar{\beta} -\bar{c} < c_1 <  \ov{r(t;\beta)}$.
\end{itemize}
\end{thm}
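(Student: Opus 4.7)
The plan rests on a ``squeeze $+$ compactness $+$ uniqueness'' scheme for the right boundary, with a mirror-image argument for the left boundary in case~(i). The key observation is that the periodic rightward semi-wave $u(t,x)=U(t,R(t)-x)$ is itself an exact solution of \eqref{p}$_1$ on $\{x\leqslant R(t)\}$ with the Stefan condition holding with equality at $x=R(t)$; hence suitable shifts of it provide natural supersolutions, while truncated multiples of $(1-\varepsilon)U$ furnish subsolutions once (virtual) spreading has been established. The comparison principle for Stefan-type free boundary problems in the spirit of \cite{DuLin,DuLou} then controls $h(t)-R(t)$.

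First I would show that $h(t)-R(t)$ is uniformly bounded above and below. For the upper bound, pick $K$ large enough that $U(0,R(0)+K-x)\geqslant u_0(x)$ on $[-h_0,h_0]$ and $R(0)+K\geqslant h_0$; comparison yields $h(t)\leqslant R(t)+K$ for all $t>0$. For the lower bound, (virtual) spreading combined with the Fisher--KPP structure in $\HH$ forces $u(t_0,\cdot)$ to exceed a fixed fraction of $P(t_0)$ on a long interval at some large $t_0$; one can then fit a truncated subsolution of the form $(1-\varepsilon)U(t,R(t)-x-K_0)$ on a moving sub-interval whose inner endpoint stays strictly to the left of the true right front, which eventually gives $h(t)\geqslant R(t)-K_0'$.

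Next I would upgrade this bound to existence of $H_1$ and to the uniform profile convergence via a compactness-and-uniqueness argument. Choose $t_n\to\infty$ with $t_n=n_k T+s_n$ and $s_n\to s_\infty\in[0,T]$, realizing $H^*:=\limsup[h(t)-R(t)]$, and set $v_n(t,x):=u(t+t_n,x+R(t_n))$. Straightening the free boundary and applying parabolic Schauder estimates up to the boundary produce a subsequential limit $(v_\infty,h_\infty)$, an entire solution of the corresponding free boundary problem with coefficients shifted by $s_\infty$, with the expected spatial monotonicity and with $v_\infty(t,x)\to P(t+s_\infty)$ as $x\to-\infty$ (inherited from (virtual) spreading). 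The uniqueness of the periodic rightward semi-wave proven in Section~3 then identifies $v_\infty(t,x)=U\bigl(t+s_\infty,R(t+s_\infty)-R(s_\infty)+H^*-x\bigr)$ and $h_\infty(0)=H^*$. Running the same argument with $\liminf$ yields the same constant, forcing $H^*=H_*=:H_1$. The convergence $h'(t)-r(t;\beta)\to 0$ and the profile statements \eqref{profile convergence 1} and \eqref{profile convergence } (in case~(ii), restricted to $x\geqslant c_1 t$ so as to stay to the right of the receding back $Q$) then drop out of the same compactness, using the boundary Hopf/Schauder estimate $u_x(t,h(t))\to U_z(t,0)$.

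For the left boundary in case~(i), since $\bar{\beta}<\bar{c}$ the periodic traveling wave $Q\bigl(t,x+\bar{c}t-\int_0^t\beta(s)\,ds\bigr)$ moves leftward, and a symmetric construction with $\widehat{U}(t,x+L(t))$ in place of $U(t,R(t)-x)$ delivers \eqref{left spreading speed} and \eqref{profile convergence 1-left}; in case~(ii), the finiteness of $g_\infty$ is built into the definition of virtual spreading, so no left-side analysis is required. The main obstacle I anticipate is the uniqueness step in the Liouville argument: classifying every bounded, entire, $T$-periodic in $t$, spatially monotone solution of the free boundary problem on a left half-line, with the prescribed decay at $-\infty$, as a time-shift of the unique $(r,U)$ from \eqref{p-sw-k}. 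This should reduce, via a sliding argument performed periodically in time, to the strict monotonicity of $f(t,u)/u$ in $\HH$. A secondary technical issue is the subsolution construction for the lower bound on $h(t)-R(t)$, where one typically invokes $\HHH$ to obtain the required exponential-in-time relaxation of the perturbation toward $P(t)$.
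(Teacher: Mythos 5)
Your overall scheme (bound $h(t)-R(t)$ with semi-wave super/subsolutions, then pass to a limit in a moving frame and invoke uniqueness of the periodic semi-wave) is in the right spirit and matches the paper's Section~5 at the level of architecture, but there is a genuine logical gap at the step you describe as ``Running the same argument with $\liminf$ yields the same constant, forcing $H^*=H_*$.'' That inference does not follow. The compactness-plus-uniqueness argument, applied along a sequence $t_n$ realizing $H^*=\limsup\,[h(t)-R(t)]$, shows that the limit profile is a translate of $U$ \emph{shifted by $H^*$}; applied along a sequence realizing $H_*=\liminf$, it shows the limit is a translate of $U$ \emph{shifted by $H_*$}. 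Since translates of $U$ by different amounts are both admissible entire solutions, the classification does nothing to rule out $H^*>H_*$: the trajectory $h(t)-R(t)$ could in principle oscillate forever between two translates. Liouville-type classification identifies the \emph{shape} of the $\omega$-limit set but does not by itself prove that the shift parameter converges.

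The paper closes this gap with an intersection-number (Sturm/Angenent) argument rather than a Liouville argument. In the moving frame $y=x-R(t)$, one compares $u_1(t,y):=u(t,y+R(t))$ with the stationary-in-frame semi-wave $V_1(t,y):=U(t,y_0-y;\beta-r)$ for an arbitrary fixed $y_0$. Both are solutions of the same equation; the left end $g_1(t)\to-\infty$ ensures $u_1<V_1$ there for large $t$, so the number of sign changes of $u_1-V_1$ on $[g_1(t),\min\{y_0,h_1(t)\}]$ is finite and strictly decreases each time the free boundary $h_1(t)=h(t)-R(t)$ crosses $y_0$. Hence $h_1(t)-y_0$ changes sign at most finitely often, for every $y_0$, which combined with the already-established boundedness of $h_1$ forces $h_1(t)$ to converge to a single limit $H_1$. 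The limit $h'(t)-r(t;\beta)\to 0$ then follows from equicontinuity of $h'_1$ via parabolic Schauder estimates. This zero-number step is the missing ingredient in your plan; without it, the $\limsup=\liminf$ conclusion is unjustified. (The paper also uses a second, separate zero-number argument in the frame $z=x-h(t)$ --- exploiting that $z=0$ is a degenerate zero of $u_2-V_2$ --- to pin the limit profile to $V_2$; your ``uniqueness of $(r,U)$'' invocation would need a similar refinement, because the uniqueness statement in Proposition~\ref{prop:tsw right} concerns the normalized problem \eqref{p-sw-k}, not the translated entire solution directly.)

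Two smaller points. First, your upper bound $h(t)\leqslant R(t)+K$ by direct comparison with the exact translated semi-wave is fine and actually simpler than the paper's $v^+=(1+M'e^{-\delta t})V$ construction; the paper uses the more elaborate supersolution because it simultaneously supplies the quantitative estimate $u\leqslant(1+M'e^{-\delta t})P(t)$, which is needed later for the interior convergence $\|u(t,\cdot)-P(t)\|\to 0$ on $[c_l t,\,h(t)-X_\epsilon]$ (Proposition~\ref{pro:sigma01}(ii)). Second, in case (ii) you still need the interior estimate on $[c_1 t,\,h(t)-X_\epsilon]$ to glue the boundary-layer convergence near $x=h(t)$ to the bulk convergence to $P(t)$; this is exactly why the hypothesis $\HHH$ (equivalently $F_v(t,1)<0$, giving exponential relaxation to $P$) enters, which you correctly flagged but should state as a required input to the profile convergence, not merely to the subsolution construction.
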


As we have mentioned above,  \cite{DGP, DMZ, GLZ, LLS1, Wang} etc. considered some special cases of \eqref{p}.
Compared with their results, there are some new difficulties and breakthroughs in our approach caused by
the temporal inhomogeneity and the advection.

1.{\it Problems in advective environments}. In \cite{DGP, LLS1, Wang} the authors considered
the problem \eqref{p} without advection (with $f(t,u)$ being periodic or almost periodic in $t$).
They all presented similar results as our Theorem \ref{thm:small beta}.
But the analogue of Theorems \ref{thm:middle beta} and \ref{thm:large beta} is not obtained
since the advection is not involved in their problems.

2.{\it Partition of $\beta(t)$}. In \cite{GLZ}, the authors considered the homogeneous version of
\eqref{p}, which is a problem with advection. Two critical values $\bar{c}$ and $\beta^*$ with $\beta^* >\bar{c}>0$
were found such that, $\bar{c}$ is the first partition point of $\beta$ separating the small and medium-sized
advections, and $ \beta^*$ is the second partition point of $\beta$ separating the medium-sized and large
advections. In our problem \eqref{p}, however, $\beta(t)$ is a periodic function, whose partition is
much more complicated since this is related not only to the ``size" $\bar{\beta}$ but also to  the
``shape" $\tilde{\beta}$. In particular, the description for the second critical functions
separating the medium-sized and large advections is far from trivial,
it is based on some additional properties like: $r(t;\beta)$ and $\beta - r(t;\beta)$
are increasing functions of $\beta$ (cf. Section 1 and Section 3).

3.{\it Construction of periodic solutions and periodic traveling waves}.
In our approach we need several kinds of periodic traveling waves. For homogeneous problems,
these traveling waves can be obtained by a simple phase plane analysis (cf. \cite{GLZ}).
But for our inhomogeneous problem \eqref{p}, we prove the existence of such traveling waves by
a totally different approach inspired by \cite{DGP} and \cite{P}.

4.{\it Precise estimate for the spreading speed and the asymptotic profile for a (virtual)
spreading solution to time-periodic problems}. When (virtual) spreading happens for a solution,
the asymptotic profile and the asymptotic speed of the solution are interesting problems in applied fields.
As far as we know, it is the first time that our Theorem \ref{thm:profile of spreading sol} gives out a sharp description
for temporal inhomogeneous case. Before it, only a rough estimate:
$\lim_{t\to \infty} h(t)/t = k$ (for some $k>0$) was obtained.

\section{Preliminaries}\label{prelis}

\subsection{Positive solutions on bounded and unbounded intervals}
In this part we present several kinds of positive solutions to \eqref{p}$_1$, which will be used to construct
traveling waves in the next subsection. Recall that we write $a(t) := f_u(t,0)$ throughout this paper.

First, for any given $k\in \PP$ and $\ell >0$, we consider the following $T$-periodic eigenvalue problem:
\begin{equation}\label{eigen-p}
\left\{
 \begin{array}{ll}
 \mathcal{L} \varphi := \varphi_t - \varphi_{zz} -k (t) \varphi_z -a(t)\varphi =\lambda \varphi, & t\in [0,T],\ z\in (0,\ell),\\
 \varphi(t,0) = \varphi(t,\ell)=0, & t\in [0,T],\\
 \varphi(0,z) = \varphi(T,z), & z\in [0,\ell].
 \end{array}
 \right.
\end{equation}
On the sign of the principal eigenvalue $\lambda_1 (\ell)$ there is a well known result.

\begin{lem}[\cite{DGP, P}]\label{lem:1eigenvalue}
If $k\in \PP$ satisfies $|\bar{k}|<\bar{c} = 2\sqrt{\bar{a}}$, then there exists
$\ell^* = \ell^*(k,a)>0$ such that the principal eigenvalue $\lambda_1(\ell)$ of \eqref{eigen-p} is negative (resp. $0$,
or positive) when $\ell>\ell^*$ (resp. $\ell =\ell^*$, or $\ell <\ell^*$).

If $k\in \PP$ satisfies $|\bar{k}| \geqslant \bar{c}$ then $\lambda_1(\ell) >0$ for any $\ell>0$.
\end{lem}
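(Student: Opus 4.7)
The plan is to combine the Krein--Rutman theory for time-periodic parabolic operators with an explicit analysis of the associated whole-line problem, reducing the statement to two assertions: $\ell \mapsto \lambda_1(\ell)$ is continuous and strictly decreasing, with $\lambda_1(\ell) \to +\infty$ as $\ell \to 0^+$ and $\lambda_1(\ell) \to \bar k^2/4 - \bar a$ as $\ell \to \infty$. The identification of the latter limit is the delicate step and will drive the whole dichotomy.

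First I would invoke the standard periodic-parabolic Krein--Rutman theorem to obtain, for each $\ell > 0$, a simple real principal eigenvalue $\lambda_1(\ell)$ carried by a strictly positive $T$-periodic eigenfunction $\varphi_\ell$ vanishing on $\{0,\ell\}$. Continuity and strict monotonicity of $\ell \mapsto \lambda_1(\ell)$ then follow from a ratio/strong-maximum-principle argument applied to $\varphi_{\ell_1}/\varphi_{\ell_2}$ on the smaller cylinder $(0,\ell_1) \subset (0,\ell_2)$. The behavior $\lambda_1(\ell) \to +\infty$ as $\ell \to 0^+$ is immediate from comparison with the Dirichlet eigenvalue $\pi^2/\ell^2$ of $-\partial_{zz}$ together with $L^\infty$ bounds on $k$ and $a$.

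For the crucial $\ell \to \infty$ limit, the idea is to exploit the explicit whole-line solution
\begin{equation*}
\wt\varphi(t,z) := e^{-(\bar k/2)z + \Phi(t)},
\end{equation*}
where $\Phi \in \PP$ is the $T$-periodic primitive of the mean-zero function $\bar k^2/2 - \bar a + a(t) - k(t)\bar k/2$. A direct computation gives $\mathcal{L}\wt\varphi \equiv (\bar k^2/4 - \bar a)\wt\varphi$ on $\R \times [0,T]$. Applying the parabolic strong maximum principle to $w := \varphi_\ell/\wt\varphi$ --- which is $T$-periodic, nonnegative, vanishes on $\{0,\ell\}$, and is strictly positive inside --- at an interior maximum of $w$ gives the strict lower bound $\lambda_1(\ell) > \bar k^2/4 - \bar a$ for every finite $\ell$. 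For the matching upper bound $\lambda_1(\ell) \leq \bar k^2/4 - \bar a + o(1)$ as $\ell \to \infty$, I would test with a localized modification of $\wt\varphi$ and invoke the Collatz--Wielandt-type characterization of $\lambda_1(\ell)$ after pairing against the adjoint principal eigenfunction; this is the step I expect to be most delicate, because the obvious choice $\wt\varphi\sin(\pi z/\ell)$ produces a $\cot(\pi z/\ell)$ singularity in $\mathcal{L}\psi/\psi$ at the boundary when $k$ is genuinely nonconstant, and must be absorbed either by a careful smooth cutoff or by a weighted integration against $\varphi_\ell^*$.

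With these ingredients the dichotomy is immediate. When $|\bar k| < \bar c = 2\sqrt{\bar a}$, the limit $\bar k^2/4 - \bar a$ is strictly negative, so the continuous strictly decreasing map $\lambda_1 : (0,\infty) \to \R$ runs from $+\infty$ down to a negative limit, and the intermediate value theorem yields the unique $\ell^* = \ell^*(k,a) > 0$ with $\lambda_1(\ell^*) = 0$ and the stated sign pattern on either side. When $|\bar k| \geq \bar c$, the strict lower bound $\lambda_1(\ell) > \bar k^2/4 - \bar a \geq 0$ from the previous step already yields $\lambda_1(\ell) > 0$ for every $\ell > 0$, and no upper-bound argument is needed.
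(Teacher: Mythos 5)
The paper does not prove Lemma~\ref{lem:1eigenvalue} itself; it cites \cite{DGP} and \cite{P}, so there is no in-text argument to compare yours against, and I evaluate your proposal on its own merits. The overall structure is right, and the lower-bound half is complete and correct: with $\Phi$ as you define it, a short computation confirms $\mathcal{L}\wt\varphi \equiv (\bar k^2/4 - \bar a)\wt\varphi$, the quotient $w := \varphi_\ell/\wt\varphi$ solves $w_t - w_{zz} - (k-\bar k)w_z = (\lambda_1(\ell)-\lambda_0)w$ with $\lambda_0 := \bar k^2/4-\bar a$, and evaluating at a positive interior maximum of the $T$-periodic $w$ (which exists since $w$ vanishes at $z=0,\ell$) together with the strong maximum principle yields $\lambda_1(\ell) > \lambda_0$ for every $\ell$. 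This alone disposes of the $|\bar k|\geq\bar c$ case. The Krein--Rutman setup, the strict decrease of $\lambda_1$ in $\ell$, and the $\ell\to0^+$ blow-up are all standard and correctly invoked.

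The genuine gap is the upper bound $\limsup_{\ell\to\infty}\lambda_1(\ell)\leq\lambda_0$, which you flag as delicate but never close, and which is indispensable for the $|\bar k|<\bar c$ case (the lower bound by itself gives $\lambda_1(\ell)>\lambda_0$ but not $\lambda_1(\ell)<0$). Your two proposed remedies are not clearly sufficient. Pairing $\psi=\wt\varphi\sin(\pi z/\ell)$ against $\varphi_\ell^*$ leaves the residual $-\tfrac{\pi}{\ell}\iint(k-\bar k)\,\wt\varphi\cos(\pi z/\ell)\,\varphi_\ell^*$; whether this is $o(1)$ relative to $\iint\psi\,\varphi_\ell^*$ needs control on the distribution of the weight $\wt\varphi\varphi_\ell^*$ near the boundary, which you do not provide, and a smooth cutoff runs into the same tension between the width of the transition region and the size of the residual. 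The clean repair is to remove the oscillatory drift by a change of frame rather than a cutoff. Set $\wt K(t):=\int_0^t(k(s)-\bar k)\,ds$ and $\wt A(t):=\int_0^t(a(s)-\bar a)\,ds$, both $T$-periodic. Then $\chi(t,z):=e^{-\frac{\bar k}{2}(z+\wt K(t))+\wt A(t)}\sin\bigl(\nu(z+\wt K(t))\bigr)$ satisfies $\mathcal{L}\chi=(\lambda_0+\nu^2)\chi$ identically for every $\nu>0$: it is the imaginary part of your exponential with complex rate $-\bar k/2+i\nu$ written in the co-moving variable $z+\wt K(t)$. Its positivity set is a moving interval of fixed width $\pi/\nu$; after a constant shift this fits inside $(0,\ell)$ for all $t$ as soon as $\ell\geq\pi/\nu+\max\wt K-\min\wt K$, and extending that positive piece by zero gives a nonnegative, $T$-periodic Lipschitz function with $\mathcal{L}\bar\chi\leq(\lambda_0+\nu^2)\bar\chi$ distributionally (the kinks on the moving boundary add only a nonpositive measure). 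Pairing against $\varphi_\ell^*$ then gives $\lambda_1(\ell)\leq\lambda_0+\nu^2$ with $\nu=\pi/(\ell-\max\wt K+\min\wt K)$, which pins down the $\ell\to\infty$ limit and completes the argument.
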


Using this lemma and using the standard method of lower and upper solutions one can easily obtain the $T$-periodic solutions of
the following problem:
\begin{equation}\label{p DD0}
\left\{
 \begin{array}{ll}
 v_t = v_{zz} + k(t) v_z + f(t,v), & t\in [0,T],\ z\in (0,\ell),\\
 v(t,0) = v(t,\ell)=0, & t\in [0,T],\\
 v(0,z) = v(T,z), & z\in [0,\ell].
 \end{array}
 \right.
\end{equation}

\begin{lem}[\cite{P, Nad}]\label{lem:positive sol DD0}
Assume that $k\in\PP$ satisfies $|\bar{k}|<\bar{c}$. Then there exists a real number $\ell^*
:= \ell^* (k,a)$  such that, when $\ell >\ell^*$ the problem \eqref{p DD0} has a unique
solution $v=U_0(t,z; k,\ell)$ which satisfies $0<U_0(t,z;k,\ell)< P(t)$ in $[0,T] \times (0, \ell)$ and $(U_0)_z(t, 0; k, \ell)>0$ for
$t\in [0,T]$. Moreover, $U_0(t,z;k,\ell)$ is strictly increasing in $\ell$ and
 $U_0(t,z + \ell/2 ; k,\ell) \to P(t)$ as $\ell \to \infty$ in $L^\infty_{loc}(\R)$ topology;
when $\ell \leqslant  \ell^*$, the problem \eqref{p DD0} has only zero solution.

Assume that $k\in \PP$ satisfies $|\bar{k}|\geqslant \bar{c}$. Then for any $\ell >0$, the problem \eqref{p DD0} has only zero solution.
\end{lem}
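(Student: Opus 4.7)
My plan is to organize the proof around the sign of the principal eigenvalue $\lambda_1(\ell)$ of \eqref{eigen-p}: Lemma \ref{lem:1eigenvalue} identifies $\lambda_1(\ell)\ge 0$ as the regime of nonexistence and $\lambda_1(\ell)<0$ as the regime of existence. The three additional assertions (uniqueness of $U_0$, monotonicity in $\ell$, and the limit as $\ell\to\infty$) will each be reduced, via the KPP condition that $f(t,u)/u$ is strictly decreasing in $u$, to a parabolic comparison argument.

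Suppose first that a positive $T$-periodic solution $v$ of \eqref{p DD0} exists. Since $f(t,v)/v<a(t)$ for $v>0$, with $\lim_{u\to 0^+}f(t,u)/u=a(t)$, one has
\[
\LL v = v_t - v_{zz} - k(t)v_z - a(t)v = f(t,v) - a(t)v < 0 \quad\text{where } v>0.
\]
By the Hess characterization of the periodic-parabolic principal eigenvalue (cf.\ \cite{P}), the existence of a strictly positive $T$-periodic Dirichlet function $v$ with $\LL v<0$ forces $\lambda_1(\ell)<0$. Together with Lemma \ref{lem:1eigenvalue} this rules out positive solutions whenever either $|\bar k|\ge\bar c$ and $\ell>0$, or $|\bar k|<\bar c$ and $\ell\le\ell^*$.

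For existence when $\lambda_1(\ell)<0$, I will deploy monotone iteration between the supersolution $\ov v(t):=P(t)$ (which works because $P'(t)=f(t,P)$ and $P>0=v|_{\partial}$) and the subsolution $\underline v:=\ve\varphi_1$, where $\varphi_1>0$ is the $T$-periodic principal eigenfunction. A direct computation using the eigenvalue relation gives
\[
\underline v_t - \underline v_{zz} - k\underline v_z - f(t,\underline v) = \ve\varphi_1\bigl[a(t)+\lambda_1(\ell) - f(t,\ve\varphi_1)/(\ve\varphi_1)\bigr],
\]
which is nonpositive for all small $\ve>0$ since $\lambda_1(\ell)<0$ and $f(t,u)/u\to a(t)$ uniformly as $u\to 0^+$. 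Standard iteration produces a $T$-periodic solution $U_0$ with $\ve\varphi_1\le U_0\le P(t)$, and the Hopf lemma at $z=0$ gives $(U_0)_z(t,0)>0$. Uniqueness follows from the sliding argument: if $v_1,v_2$ were two positive solutions, then $\tau^*:=\sup\{\tau>0 : \tau v_1\le v_2\}\in(0,\infty)$, and if $\tau^*<1$ the strict decrease of $f(t,u)/u$ yields $f(t,\tau^* v_1)>\tau^* f(t,v_1)$, making $\tau^* v_1$ a strict subsolution touching $v_2$ from below, contradicting the strong maximum principle for $T$-periodic problems.

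Monotonicity of $U_0$ in $\ell$ follows by comparing $U_0(\cdot,\cdot;k,\ell_1)$ with the restriction of $U_0(\cdot,\cdot;k,\ell_2)$ to $(0,\ell_1)$ for $\ell_1<\ell_2$; the difference solves a linear periodic-parabolic problem with nonnegative boundary data, and a second application of the KPP comparison argument gives the strict order. Finally, to show $U_0(t,z+\ell/2;k,\ell)\to P(t)$ in $L^\infty_{loc}(\R)$, set $W_\ell(t,y):=U_0(t,y+\ell/2;k,\ell)$; the uniform bound $0<W_\ell\le P(t)$ together with interior parabolic Schauder estimates lets me extract a subsequential limit $W_\infty$ that is a bounded positive $T$-periodic entire solution of $v_t=v_{zz}+k(t)v_z+f(t,v)$ on $\R\times[0,T]$. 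The main obstacle, and the step I expect to require the most care, is the Liouville-type uniqueness of such entire solutions: I will show $W_\infty\equiv P(t)$ by combining the already-known bound $W_\infty\le P(t)$ with a sliding argument powered once again by the strict monotonicity of $f(t,u)/u$ (as in \cite{Nad, P}). Once this is established, independence from the subsequence yields convergence of the full family $\{W_\ell\}$ and completes the proof.
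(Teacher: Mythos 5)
The paper does not give its own proof of this lemma: it is cited to Hess~\cite{P} and Nadin~\cite{Nad}, with only the remark that ``using Lemma~\ref{lem:1eigenvalue} and using the standard method of lower and upper solutions one can easily obtain the $T$-periodic solutions.'' Your proposal supplies exactly that skeleton in the expected way: nonexistence when $\lambda_1(\ell)\ge 0$ from the comparison characterization of the periodic principal eigenvalue, existence when $\lambda_1(\ell)<0$ by monotone iteration between $\varepsilon\varphi_1$ and $P(t)$, uniqueness and monotonicity in $\ell$ by KPP sliding, and the $\ell\to\infty$ limit by interior estimates plus a Liouville-type identification of the entire solution. This is the standard route and is consistent with what the cited references do, so I count it as essentially the same approach.

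One point worth spelling out more carefully is the $\ell\to\infty$ step, which you correctly flag as the delicate one. Before the sliding argument on the entire line can run, you need a positive lower bound for the limit $W_\infty$ that is \emph{uniform in $(t,y)$}, and the bound $W_\ell\le P(t)$ alone does not give it. The standard fix is to fix one width $L>\ell^*$ with $\lambda_1(L)<0$, slide the compactly supported periodic solution $U_0(t,\cdot\,;k,L)$ to any position inside $(0,\ell)$, and use the KPP sliding comparison to get $U_0(t,z;k,\ell)\ge U_0(t,z-s;k,L)$ for every admissible shift $s$; choosing $s$ so the window is centered at $\ell/2+y$ gives $W_\ell(t,y)\ge U_0(t,L/2;k,L)=:\delta_0(t)>0$ uniformly in $y$ and in $\ell$ large. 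With that uniform floor in hand, the scalar sliding $\tau^*=\sup\{\tau>0:\tau P(t)\le W_\infty\}$ is well-posed and the strict-decrease of $f(t,u)/u$ finishes the argument, recovering the Liouville theorem of \cite{Nad}. Your proposal gestures at this but does not make the uniform-floor step explicit; it is the only place a reader would need to stop and think.
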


Next we consider the problem \eqref{p DD0} with different boundary condition at $z=\ell$, that is,
\begin{equation}\label{p DD1}
\left\{
 \begin{array}{ll}
 v_t = v_{zz} + k(t) v_z + f(t,v), & t\in [0,T],\ z\in (0,\ell),\\
 v(t,0) = 0,\ v(t,\ell)= P^0:= \max_{0\leqslant  t\leqslant  T} P(t), & t\in [0,T],\\
 v(0,z) = v(T,z), & z\in [0,\ell].
 \end{array}
 \right.
\end{equation}

\begin{lem}\label{lem:positive sol DD1}
For any $k\in \PP$  and any $\ell >0$, the problem \eqref{p DD1} has a maximal solution $v= U_1 (t,z;k,\ell)$,
which is strictly increasing in both $z\in [0,\ell]$ and $k\in \PP$, strictly decreasing in $\ell >0$.

If $k\in \PP$ satisfies $\bar{k}>-\bar{c}$, then there exists $\delta>0$ independent of $\ell$ such that $(U_1)_z (t,0;k,\ell)
\geqslant \delta$ for $t\in [0,T]$.
\end{lem}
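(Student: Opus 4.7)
The first task is existence of $U_1$ with maximality, which I would obtain by monotone iteration from above. Choose $M > \max\{P^0, 1\}$; then $f(t, M) < 0$ by $\HH$, so $\bar v \equiv M$ is a supersolution of \eqref{p DD1}. Pick $K > 0$ large enough that $u \mapsto f(t,u) + Ku$ is nondecreasing on $[0, M]$, and iteratively define $v_{n+1}$ as the unique $T$-periodic solution of
\begin{equation*}
(v_{n+1})_t - (v_{n+1})_{zz} - k(t)(v_{n+1})_z + K v_{n+1} = f(t, v_n) + K v_n
\end{equation*}
with the boundary data $0$ at $z=0$ and $P^0$ at $z=\ell$, starting from $v_0 \equiv M$. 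Standard arguments give that $\{v_n\}$ is monotonically decreasing, so $U_1 := \lim v_n$ exists and, by parabolic regularity, is a classical $T$-periodic solution of \eqref{p DD1}. Maximality is automatic: any other solution $v$ satisfies $v \leq M = v_0$ and by induction $v \leq v_n$ for every $n$.

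\emph{Strict monotonicity properties.} For strict increase in $z$ I would use the sliding method on $U_1(t, z+s) - U_1(t, z)$ for $s \in (0,\ell)$: at $z=0$ the difference equals $U_1(t,s) > 0$ (since $U_1 > 0$ in the interior by $U_1(t,\ell)=P^0$ and the SMP), and at $z=\ell-s$ it equals $P^0 - U_1(t, \ell-s) > 0$ (since $U_1 < P^0$ in the interior, again by SMP); the strong maximum principle applied to the linear equation satisfied by the difference yields strict positivity. For strict monotonicity in $k$: if $k_1 \leq k_2$, $k_1 \not\equiv k_2$, the function $v_1 := U_1(\cdot,\cdot;k_1,\ell)$ satisfies
\begin{equation*}
(v_1)_t - (v_1)_{zz} - k_2(t)(v_1)_z - f(t, v_1) = (k_1 - k_2)(v_1)_z \leq 0
\end{equation*}
thanks to $(v_1)_z > 0$ just established, so $v_1$ is a subsolution of the $k_2$-problem; the iteration from $M \geq v_1$ stays above $v_1$, giving $v_1 \leq U_1(\cdot,\cdot;k_2,\ell)$ with strictness by SMP. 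For strict decrease in $\ell$: for $\ell_1 < \ell_2$, the restriction of $U_1(\cdot,\cdot;k,\ell_2)$ to $[0, \ell_1]$ solves the PDE with boundary value at $z=\ell_1$ strictly less than $P^0$ (SMP applied to $U_1(\cdot,\cdot;k,\ell_2)$), whence comparison with the maximal $U_1(\cdot,\cdot;k,\ell_1)$ gives the strict inequality.

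\emph{Uniform lower bound on $(U_1)_z(t, 0;k,\ell)$.} The condition $\bar k > -\bar c$ is precisely what makes the interval $(\max\{0,\bar k - \bar c\}, \bar k + \bar c)$ nonempty; pick any $K_0$ in it and set $k^\circ := k - K_0 \in \PP$, so that $k^\circ < k$ pointwise and $|\overline{k^\circ}| < \bar c$. The $k$-monotonicity gives $(U_1)_z(t,0;k,\ell) \geq (U_1)_z(t,0;k^\circ,\ell)$, reducing the problem to the regime $|\bar k| < \bar c$ where Lemma \ref{lem:1eigenvalue} is available. In this regime, define
\begin{equation*}
U_1^*(t,z) := \lim_{\ell \to \infty} U_1(t, z; k^\circ, \ell),
\end{equation*}
a decreasing pointwise limit that, by parabolic Schauder estimates, is actually a $C^{1+\nu/2,2+\nu}_{loc}$ limit, so $U_1^*$ is a $T$-periodic classical solution on $[0,T]\times [0,\infty)$ with $U_1^*(t,0)=0$, nondecreasing in $z$; moreover $\Psi(t) := \lim_{z\to\infty} U_1^*(t,z)$ satisfies $\Psi'=f(t,\Psi)$ and is $T$-periodic, so $\Psi\equiv 0$ or $\Psi=P$. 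To rule out $\Psi\equiv 0$, choose via Lemma \ref{lem:1eigenvalue} some $\ell_0 > \ell^*$ with $\lambda_1(\ell_0) < 0$ and let $\varphi > 0$ be the corresponding principal eigenfunction; for $\varepsilon > 0$ small, $\varepsilon\varphi$ is a positive $T$-periodic subsolution of the nonlinear problem on $[0,\ell_0]$ with zero Dirichlet data (using $f(t,u)=a(t)u+o(u)$). The iteration from $M \geq \varepsilon\varphi$ stays above it, so $U_1(\cdot,\cdot;k^\circ,\ell) \geq \varepsilon\varphi$ on $[0,\ell_0]$ for every $\ell \geq \ell_0$, whence $U_1^* \geq \varepsilon\varphi > 0$ on $(0,\ell_0)$ and therefore $\Psi=P$, $U_1^* > 0$ throughout the interior. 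The parabolic Hopf lemma gives $(U_1^*)_z(t,0) > 0$ for each $t$; by $T$-periodicity and continuity, $\delta := \min_{t \in [0,T]} (U_1^*)_z(t,0) > 0$. Finally, dividing the inequality $U_1(t, z; k^\circ, \ell) \geq U_1^*(t, z)$ (from monotonicity in $\ell$) by $z > 0$ and letting $z \to 0^+$ yields $(U_1)_z(t, 0; k^\circ, \ell) \geq \delta$ uniformly in $\ell$, hence the claimed bound for the original $k$.

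The main obstacle is this last step: ruling out the degenerate limit $U_1^* \equiv 0$ needs a positive compactly supported subsolution, which the principal-eigenvalue construction of Lemma \ref{lem:1eigenvalue} supplies only when $|\bar k| < \bar c$; the reduction via the $k$-monotonicity from $\bar k > -\bar c$ to $|\overline{k^\circ}| < \bar c$ is what makes the sharp threshold $\bar k > -\bar c$ emerge naturally.
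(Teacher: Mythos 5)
Your proof is correct, but it takes a genuinely different route from the paper's on both halves of the lemma. For existence and monotonicity, the paper solves the initial--boundary value problem starting from $v(0,z)=P^0\chi_{[0,\ell]}(z)$, extracts the monotonicity of $v$ in $z,k,\ell$ from the parabolic comparison principle applied to the evolution problem, and invokes the zero-number convergence theorem of Brunovsk\'y--Pol\'a\v{c}ik--Sandstede to pass to the $T$-periodic limit $U_1$; you instead run a monotone iteration downward from the constant supersolution $M$ and then recover the monotonicities for the periodic solution $U_1$ directly (sliding for $z$, sub/supersolution comparisons for $k$ and $\ell$). Both are standard; the paper's route gets the monotonicities essentially for free from the IVP but needs the nontrivial BPS/zero-number machinery, whereas yours is more elementary and self-contained but requires the extra sliding argument (whose base case at $s$ near $\ell$ should be justified by uniform continuity of $U_1$, not by the monotonicity one is trying to prove). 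For the uniform lower bound on $(U_1)_z(t,0)$, the paper splits into $|\bar k|<\bar c$ (compare with $U_0(\cdot,\cdot;k,\ell^*+1)$ from Lemma~\ref{lem:positive sol DD0}) and $\bar k\geqslant\bar c$ (a clever moving-frame comparison with a translated $U_0(\cdot,\cdot;\tilde k,\ell_2)$ that sweeps across the interval). Your idea of using the $k$-monotonicity to shift down to $k^\circ=k-K_0$ with $|\overline{k^\circ}|<\bar c$, and then passing to the $\ell\to\infty$ limit $U_1^*$ together with an eigenvalue subsolution and Hopf's lemma, replaces the paper's two-case analysis with a single conceptual reduction that makes the threshold $\bar k>-\bar c$ appear for an intrinsic reason; this is a nice improvement, although it does hinge on first having established the strict $k$-monotonicity (which in turn needs $(U_1)_z>0$). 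Two small cosmetic points: the digression identifying $\Psi=\lim_{z\to\infty}U_1^*$ with $P$ is not needed once you know $U_1^*\geq\varepsilon\varphi>0$ on $(0,\ell_0)$, since Hopf applies already; and for $\ell<\ell_0$ the bound $U_1(\cdot,\cdot;k^\circ,\ell)\geq U_1^*$ should be noted to follow from the $\ell$-monotonicity of $U_1$ (decreasing in $\ell$, with $U_1^*$ the infimum), which your final sentence already invokes but is worth spelling out.
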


\begin{proof}
Consider the equation and the boundary conditions in \eqref{p DD1} with initial data $v(0,z):= P^0 \cdot \chi_{[0,\ell]} (z)$,
where $\chi_{[0,\ell]} (z)$ is the characteristic function on the interval $[0,\ell]$. This initial-boundary value problem has a
unique solution $v(t,z; k,\ell)$. Using the maximum principle we see that $v(t,z;k,\ell)$ is strictly
increasing in $z\in [0,\ell]$ and in $k\in \PP$, strictly decreasing in $\ell >0$ and $v(t,z;k,\ell)\leqslant  P^0$.
Using the zero number argument in a similar way as in the proof of \cite[Theorem 1]{BPS}\footnote{In \cite{BPS},
$f\in C^2$ is assumed and $v\to U_1$ is taken in $H^2 ([0,\ell])$. Note
that for our problem \eqref{p DD1}, the assumption for $f$ and $k$ is sufficient to guarantee that the omega limit set of
$v(t,\cdot)$ in the topology $C^2 ([0,\ell])$ is not empty, and then a similar zero number argument as in
\cite{BPS} gives the convergence $v\to U_1$ in $C^2 ([0,\ell])$.
Moreover, the zero number properties we used here are those in Angenent \cite{A}, where the coefficient
$k(t)$ of $v_z$ is assumed to be in $W^{1,\infty}([0,T])$. We remark that for our problem, the condition
$k\in C^{\nu/2}([0,T])$ is sufficient to proceed the zero number argument.
In fact, denote $K(t):= \int_0^t k(s)ds$, $y:= z+K(t)$ and $w(t,y):= v(t, y-K(t))$, then the equation of $v$ is
converted into $w_t = w_{yy} + f(t,w)$. This equation has no the first order term, though it is considered in a
moving frame $K(t) < y < K(t) +\ell$, the zero number properties as in \cite{A} remain valid (cf. \cite{DuLouZ,GLZ}).
In Subsection 5.2, we use the zero number properties in the same way.}
one can show that $\|v(t,\cdot ;k,\ell) - U_1(t,\cdot ;k,\ell)\|_{C^2  ([0,\ell])} \to 0$ as $t\to \infty$,
where $U_1 (t,z;k,\ell)\in C^{1+\nu/2, 2+\nu} ([0,T]\times [0,\ell])$ is a time periodic solution of \eqref{p DD1}.
By the maximum principle again, we see that $U_1$ has the same monotonic properties as $v$ in $z,\ k$ and $\ell$.

We now assume $\bar{k}>-\bar{c}$ and show the existence of the positive lower bound for $(U_1)_z(t,0;k,\ell)$.

\smallskip

{\it Case 1}. $|\bar{k}| <\bar{c}$. In this case, by Lemma \ref{lem:positive sol DD0}, there exists a positive
number $\ell^*(k, a)$ such that the problem \eqref{p DD0} with $\ell= \ell_1 := \ell^*(k,a) +1$ has a time periodic
solution $U_0(t, z; k, \ell_1)$ which satisfies $\delta_1 := \min_{0\leqslant  t\leqslant  T} (U_0)_z (t ,0; k, \ell_1 ) >0$.
By the comparison principle we have $v(t,z;k,\ell)\geqslant U_0(t,z;k,\ell_1 )$ for all $t>0$ and $z\in [0, \min\{\ell, \ell_1\}]$.
Taking limit as $t\to \infty$ we have $U_1 (t,z;k,\ell)\geqslant U_0(t,z;k,\ell_1 )$ for all $t>0$ and $z\in [0, \min\{\ell, \ell_1\}]$,
and so $(U_1)_z (t,0;k,\ell) \geqslant \delta_1 $ for $t\in [0,T]$.

\smallskip

{\it Case 2}. $\bar{k}\geqslant \bar{c}$. Set $\tilde{k} := k-\bar{k}$, then by Lemma \ref{lem:positive sol DD0} again,
there exists a positive
number $\ell^*(\tilde{k}, a)$ such that the problem \eqref{p DD0} with $k$ replaced by $\tilde{k}$ and $\ell= \ell_2 := \ell^*(\tilde{k},a) +1$
has a time periodic solution $U_0(t, z; \tilde{k}, \ell_2)$, which satisfies $\delta_2 := \min_{0\leqslant  t\leqslant  T}
(U_0)_z (t ,0;\tilde{k}, \ell_2 ) >0$. Let $\ell$ be the width of the interval in the problem \eqref{p DD1} and set
$\tau_0 := \ell/ \bar{k}$, then for any $\tau >\tau_0$, the function $\underline{v}(t,z) :=
U_0(t, z+\bar{k}(t-\tau) ; \tilde{k}, \ell_2)$ solves the following problem:
$$
\left\{
 \begin{array}{l}
 v_t = v_{zz} +k v_z + f(t,z), \quad  \bar{k} (\tau -t) < z < \ell_2 + \bar{k}(\tau -t),\ t>0,\\
 v(t, \bar{k}(\tau -t) )= v(t,  \ell_2 + \bar{k}(\tau -t) )=0, \quad t>0.
 \end{array}
 \right.
$$
We will compare $v(t,z;k,\ell)$ and $\underline{v}(t,z)$. When $t\in [0,\tau -\tau_0)$, there is no need to compare
them since their spatial domains have no intersection. When $t\in (\tau -\tau_0, \tau]$ we have $0\leqslant \bar{k}(\tau -t) <\ell$,
and so they have common spatial domain $J(t):= [\bar{k}(\tau -t), \min\{\ell_2 +\bar{k}(\tau -t),\; \ell\}]$. By
the comparison principle we have  $v(t,z;k,\ell)\geqslant \underline{v}(t,z)$ for all $z\in J(t)$ and $t\in (\tau -\tau_0, \tau]$.
In particular, at $t=\tau$ we have
$v(\tau ,z;k,\ell)\geqslant U_0(\tau,z;\tilde{k},\ell_2 )$ in $z\in J(\tau)$.
Therefore, $v_z(\tau,0; k,\ell) \geqslant (U_0)_z(\tau,0;\tilde{k},\ell_2) \geqslant \delta_2 $. Since $\tau >\tau_0$ is arbitrary we have $(U_1)_z(t,0;k,\ell)\geqslant \delta_2$
for $t\in [0,T]$.
\end{proof}

Finally, let us consider the problem on the half line
\begin{equation}\label{p Dinfty}
\left\{
\begin{array}{ll}
v_t = v_{zz} + k(t) v_z + f(t,v), &  t\in [0,T],\ z>0,\\
v(t,0)=0,  & t\in [0,T],\\
v(0,z) = v(T,z), & z \geqslant 0.
\end{array}
\right.
\end{equation}

\begin{lem}\label{lem:p tsw k half}
For each $k\in \PP$, the problem \eqref{p Dinfty} has a bounded and nonnegative solution $U(t,z;k)$.
Moreover,
\begin{itemize}
\item[(i)] if $\bar{k} > - \bar{c}$, then
\begin{equation}\label{U increase to P(t)}
U_z(t,z;k)>0 \mbox{ in } [0,T]\times [0,\infty), \quad U(t,z;k)- P(t)\to 0 \mbox{ as } z\to \infty;
\end{equation}
$U(t,z;k)$ is the unique solution of \eqref{p Dinfty} satisfying \eqref{U increase to P(t)};
$U_z(t,0;k)$ has a positive lower bound $\delta$ (independent of $t$), and it is strictly increasing
in $k$: $U_z(t,0;k_1)< U_z(t,0;k_2)$ for $k_1, k_2\in \PP$ satisfying $k_1\leqslant, \not\equiv k_2$ and $\ov{k_1}, \ov{k_2} >-\bar{c}$;
$U_z(t,0;k) $ is continuous in $k$ in the sense that, for $\{k_1, k_2, \cdots\} \subset \PP$
satisfying $\ov{k_i} >-\bar{c}\ (i=1,2,\cdots)$,
$U_z(t,0;k_n)\to U_z(t,0; k)$ in $C^{\nu/2}([0,T])$ if $k_n \to k$ in $C^{\nu/2} ([0,T])$.

\item[(ii)] when $\bar{k}\leqslant -\bar{c}$, \eqref{p Dinfty} has only trivial solution $0$.
\end{itemize}
\end{lem}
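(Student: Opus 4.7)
The plan is to construct $U(t,z;k)$ as a monotone limit of the finite-interval approximants $U_1(t,z;k,\ell)$ supplied by Lemma \ref{lem:positive sol DD1}, and to read off the qualitative properties from that construction together with the standard parabolic toolbox (strong maximum principle, Hopf lemma, sliding method, Schauder estimates). For part (ii) the plan is different: build an explicit exponential super-eigenfunction on the half-line and squeeze any bounded nonnegative solution to zero.

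\textbf{Part (i).} First fix $k\in\PP$ with $\bar{k}>-\bar{c}$. Since $U_1(t,z;k,\ell)$ is strictly decreasing in $\ell$ and bounded by $P^0$, the pointwise limit $U(t,z;k):=\lim_{\ell\to\infty}U_1(t,z;k,\ell)$ exists, and uniform interior Schauder estimates upgrade the convergence to $C^{1+\nu/2,2+\nu}_{loc}$, so $U$ is a classical $T$-periodic solution of the equation in \eqref{p Dinfty} with $U(t,0)=0$. The uniform lower bound $(U_1)_z(t,0;k,\ell)\geqslant\delta$ of Lemma \ref{lem:positive sol DD1} passes to the limit and gives $U_z(t,0;k)\geqslant\delta>0$, while the strong maximum principle applied to the equation satisfied by $U_z$ upgrades the non-strict monotonicity inherited from $U_1$ to $U_z>0$. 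Boundedness and $z$-monotonicity yield a pointwise limit $\Phi(t):=\lim_{z\to\infty}U(t,z;k)$; a parabolic compactness argument on the translates $U(t,z+n;k)$ identifies $\Phi$ as a $T$-periodic solution of the ODE $u_t=f(t,u)$, so by $\HH$ either $\Phi\equiv 0$ (excluded by $U_z(t,0;k)>0$) or $\Phi\equiv P(t)$. Uniqueness I would obtain by the sliding method: for another solution $V$ with $V_z>0$ and $V(t,\infty)=P(t)$, set $U^a(t,z):=U(t,z+a;k)$; using $U^a(t,0)\to P(t)$ uniformly as $a\to\infty$ one checks $U^a\geqslant V$ for all large $a$, and the critical shift $a^*:=\inf\{a\geqslant 0:U^a\geqslant V\}$ must vanish, since a compactness argument (the common limit $P(t)$ at $z=\infty$ prevents touching from escaping to infinity) realizes the infimum of $U^{a^*}-V$ at a finite point, and the strong maximum principle together with Hopf at $z=0$ forces $a^*=0$. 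Strict monotonicity in $k$ follows from the analogue for $U_1$ after passage to the limit: $w:=U(\cdot;k_2)-U(\cdot;k_1)\geqslant 0$ solves a linear parabolic inequality with nontrivial source $(k_2-k_1)U_z(\cdot;k_1)$, so Hopf at $z=0$ yields $U_z(t,0;k_2)>U_z(t,0;k_1)$; continuity in $k$ is obtained from uniform Schauder bounds up to $z=0$ combined with the uniqueness just proved.

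\textbf{Part (ii).} For $\bar{k}\leqslant-\bar{c}$, take $\mu:=-\bar{k}/2\geqslant\bar{c}/2>0$ and $\lambda:=-\mu^2-\mu\bar{k}-\bar{a}=\bar{k}^2/4-\bar{a}\geqslant 0$, and build the positive exponential function
\[
V(t,z):=\phi(t)e^{\mu z},\qquad \phi(t):=\exp\Big(\int_0^t\big(\lambda+\mu^2+\mu k(s)+a(s)\big)ds\Big),
\]
noting that $\int_0^T(\lambda+\mu^2+\mu k+a)\,ds=0$ ensures $T$-periodicity of $\phi$; by construction $V_t-V_{zz}-kV_z-a(t)V=\lambda V\geqslant 0$. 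For a bounded nonnegative $T$-periodic solution $v$ of \eqref{p Dinfty}, first note $v\leqslant P^0$ (compare $v$ with the ODE solution starting from $\sup v$ and use $T$-periodicity), and then the number $C^*:=\inf\{C>0:v\leqslant CV\}$ is finite because $V\to\infty$ as $z\to\infty$. Writing $f(t,v)=(a(t)+\wt c(t,z))v$ with $\wt c\leqslant 0$ (Fisher--KPP), the function $w:=C^*V-v\geqslant 0$ satisfies $w_t-w_{zz}-kw_z-a(t)w=C^*\lambda V-\wt c\,v\geqslant 0$ with $w(t,0)=C^*\phi(t)$. If $C^*>0$, then $\inf w=0$ (otherwise a smaller $C$ would work, using $V\geqslant\phi_{\min}>0$) and is attained at some finite $(t^*,z^*)$ (since $w\to\infty$ as $z\to\infty$ and $w(t,0)>0$); the strong maximum principle then forces $w\equiv 0$ in a parabolic neighborhood of $(t^*,z^*)$ and, via $t$-periodicity, down to $z=0$, contradicting $w(t,0)>0$. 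Hence $C^*=0$ and $v\equiv 0$.

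\textbf{Main obstacle.} I expect the most delicate step to be the sliding argument in part (i): without an a priori exponential rate for $P(t)-U(t,z;k)$, one has to justify that the critical shift $a^*$ is realized at a contact point to which the strong maximum principle and Hopf lemma apply, instead of being ``lost'' as $z\to\infty$. The fact that both $U$ and $V$ share the same asymptotic profile $P(t)$ and are strictly increasing in $z$ is what lets a compactness argument on minimizing sequences rule out escape to infinity and close the argument; the same common-limit mechanism is what makes the explicit exponential barrier in part (ii) effective despite the lack of a decaying eigenfunction.
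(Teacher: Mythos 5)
Your construction of $U$ as the decreasing $\ell\to\infty$ limit of the finite-interval solutions $U_1(t,z;k,\ell)$, the passage to the limit of the lower bound $\delta$ for $U_z(t,0;k)$, the strong maximum principle upgrade to $U_z>0$, the identification of $\lim_{z\to\infty}U(t,z;k)=P(t)$, and the Hopf-lemma argument for strict monotonicity of $U_z(t,0;k)$ in $k$ are all the same as the paper's. For part (ii) you give a genuine alternative: the paper cites \cite{DGP} without detail, whereas your explicit barrier $V(t,z)=\phi(t)e^{\mu z}$ with $\mu=-\bar{k}/2$ and the touching argument with $C^*:=\inf\{C>0:v\leqslant CV\}$ is self-contained and correct. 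The one small item to make explicit there is why $\inf w=0$ (for $C^*>0$): you need $v/V\to 0$ as $z\to\infty$ (which follows from $v\leqslant P^0$ and $V\geqslant\phi_{\min}e^{\mu z}$) to rule out the touching set escaping to infinity when you try to reduce $C$ below $C^*$.

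The uniqueness step in part (i), however, has a genuine gap. You slide $U^a(t,z)=U(t,z+a;k)$ against $V$ and assert $U^a\geqslant V$ for all large $a$. This is not established and cannot be, under $\HH$ alone. Because $U^a(t,z)\to P(t)$ and $V(t,z)\to P(t)$ as $z\to\infty$, the inequality for large $z$ is a race between two tails with the same limit: your estimate $P(t)-U^a(t,z)\leqslant P(t)-U(t,a;k)$ does not dominate $P(t)-V(t,z)$ uniformly in $z$, since the latter also tends to $0$ and eventually drops below any fixed $a$-dependent constant. Controlling the rates would require an exponential decay estimate for $P(t)-U$ governed by the linearization at $P$, i.e., $\alpha(t)<0$ — that is $\HHH$, which this lemma does not assume. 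Relatedly, your claim that "the common limit $P(t)$ at $z=\infty$ prevents touching from escaping to infinity" is backwards: a shared limit is precisely what lets $\inf(U^{a^*}-V)$ be approached only at $z=\infty$ without being attained. What works under $\HH$ alone is the multiplicative comparison the paper uses in its own Lemma \ref{lem subso} (Step 2) and, implicitly via \cite{DGP}, here as well: set $\rho^*:=\sup\{\rho\in(0,1]:\rho\,U\leqslant V\}$, use $f(t,u)/u$ strictly decreasing to check $\rho^*U$ is a lower solution, and rule out $\rho^*<1$ by the strong maximum principle and Hopf at $z=0$. There, escape to infinity is excluded because $V-\rho^*U\to(1-\rho^*)P(t)>0$ as $z\to\infty$, a uniform positive gap that additive sliding does not provide. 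You should replace your sliding scheme by this multiplicative one.
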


\begin{proof}
Let $U_1 (t,z;k,\ell) $ be the solution of \eqref{p DD1} obtained
in the previous lemma. Since it is decreasing in $\ell$, by taking limit as $\ell \to \infty$ we see that $U_1 (t,z;k,\ell)$ converges to some function $U(t,z;k)$, which is non-decreasing in $z$ and in $k$ since $U_1$ is so. By standard regularity argument, $U$ is a classical solution of \eqref{p Dinfty}.

(i) In case $\bar{k}>-\bar{c}$, Lemma \ref{lem:positive sol DD1} implies that $U_z(t,0;k)\geqslant\delta >0$. Using the strong maximum principle to $U_z$ we conclude that $U_z(t,z;k)>0$ in $[0,T]\times [0,\infty)$. Thus $P_1(t):= \lim_{z\to \infty} U(t,z;k)$ exists.
In a similar way as in the proof of \cite[Proposition 2.1]{DGP} one can show that $P_1(t)$ is nothing but the positive periodic solution $P(t)$ of $u_t =f(t,u)$. The uniqueness of $U(t,z;k)$ and its continuous dependence in $k$ can be proved in a similar way as \cite[Theorems 2.4 and 2.5]{DGP}.
Since $U(t,z;k)$ is non-decreasing in $k$ we have $U_z(t,0;k_1)\leqslant U_z(t,0;k_2)$
when $k_1 \leqslant k_2$. The strict inequality $U_z(t,0;k_1)<  U_z(t,0;k_2)$ follows
from the Hopf lemma and the assumption $k_1 \leqslant,\not\equiv k_2$.

(ii). The conclusion can be proved in a similar way as in the
proof of \cite[Proposition 2.3]{DGP}.
\end{proof}

Let $U_0(t,z;k,\ell)$ and $U(t,z;k)$ be the solutions obtained in the above lemmas, denote
$$
A_0 [k,\ell](t) := \mu(t) (U_0)_z(t,0;k,\ell),
\quad A [k] (t):= \mu(t) U_z (t,0;k),
$$
where $\mu(t)$ is the function in the Stefan condition in \eqref{p}.
From Lemma \ref{lem:p tsw k half} we see that $A[k](t)$ is strictly increasing in $k\in\PP$ when $\bar{k}>-\bar{c}$.
We now show the convergence $A_0 [k,\ell]\to A[k]$ as $\ell \to \infty$.

\begin{lem}\label{lem:A0 ell to A}
Assume that $k\in \PP$ satisfies $|\bar{k}|<\bar{c}$. Then
$A_0 [k,\ell] (t)\to A[k](t)$ in $L^\infty ([0,T])$ norm as
$\ell \to \infty$.
\end{lem}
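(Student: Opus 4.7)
The plan is to show that $U_0(\cdot,\cdot;k,\ell)$ converges to $U(\cdot,\cdot;k)$ in $C^{1,2}$ on compact subsets of $[0,T]\times[0,\infty)$ as $\ell\to\infty$; then the $L^\infty([0,T])$ convergence at $z=0$ follows by reading off the derivative and multiplying by the bounded factor $\mu(t)$.

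First I would invoke the monotonicity of $U_0$ in $\ell$ together with the uniform bound $0\leq U_0<P(t)$ from Lemma \ref{lem:positive sol DD0}, and standard parabolic Schauder estimates, to produce uniform $C^{1+\nu/2,2+\nu}$ bounds for $U_0(\cdot,\cdot;k,\ell)$ on every strip $[0,T]\times[0,M]$. Passing to the monotone limit yields a function $U^{*}(t,z;k)$ that is bounded, nonnegative, $T$-periodic, and a classical solution of the equation in \eqref{p Dinfty} satisfying $U^{*}(t,0;k)=0$; the convergence holds in $C^{1,2}_{\mathrm{loc}}([0,T]\times[0,\infty))$.

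Second, I would identify $U^{*}$ with $U$ using the uniqueness characterization in Lemma \ref{lem:p tsw k half}(i), which requires verifying that $U^{*}_z>0$ and $U^{*}(t,\cdot;k)\to P(t)$ at infinity. For the asymptotic condition I would exploit monotonicity in $\ell$: for any $z>0$ and any $\ell\geq 2z$,
\[
U^{*}(t,z;k)\geq U_0(t,z;k,\ell)\geq U_0(t,z;k,2z),
\]
and the last quantity is precisely the midpoint value $U_0(t,\ell/2;k,\ell)$ at $\ell=2z$, which tends to $P(t)$ as $z\to\infty$ by Lemma \ref{lem:positive sol DD0}; since $U^{*}\leq P(t)$, this forces $U^{*}(t,z;k)\to P(t)$ uniformly in $t$. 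For monotonicity in $z$ I would use a sliding argument: for each $h>0$, $V_h(t,z):=U^{*}(t,z+h;k)$ solves the same PDE on the half-line with $V_h(t,0)\geq 0=U^{*}(t,0;k)$ and the same limit $P(t)$ at infinity, so a comparison argument for bounded $T$-periodic solutions on the half-line (of the type used in Lemma \ref{lem:p tsw k half}(i) to establish uniqueness of $U$) yields $U^{*}\leq V_h$, hence $U^{*}_z\geq 0$; strict positivity then follows from the strong maximum principle applied to the linear equation satisfied by $U^{*}_z$.

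Once $U^{*}\equiv U(\cdot,\cdot;k)$ is established, the $C^{1,2}_{\mathrm{loc}}$ convergence, restricted to a compact neighborhood of $\{z=0\}$, gives $(U_0)_z(t,0;k,\ell)\to U_z(t,0;k)$ uniformly in $t\in[0,T]$, and multiplying by $\mu\in\PP_+$ yields $A_0[k,\ell]\to A[k]$ in $L^\infty([0,T])$. The part I expect to be hardest is the sliding step showing $U^{*}_z\geq 0$: the nonlinearity of $f$ and the absence of a natural initial condition on the half-line preclude a direct appeal to a linear maximum principle. The standard workaround is to work first on $[0,M]$, iterate the parabolic comparison around the period using $T$-periodicity of both sides, and then let $M\to\infty$ using the asymptotic information at infinity just established.
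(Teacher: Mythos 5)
Your plan matches the paper's: pass to the monotone $C^{1,2}_{\mathrm{loc}}$ limit $U^{*}$ of $U_0(\cdot,\cdot;k,\ell)$ using the monotonicity in $\ell$ and interior parabolic estimates, identify $U^{*}$ with $U(\cdot,\cdot;k)$, and read off the boundary-derivative convergence in $L^\infty([0,T])$. The paper disposes of the identification step by invoking a similar argument to \cite[Proposition~2.1]{DGP}; you fill it in explicitly, and your midpoint argument for $U^{*}(t,\infty)=P(t)$ is exactly what Lemma~\ref{lem:positive sol DD0} supplies and is sound.

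Where your sketch is imprecise is the sliding step you yourself flag as hardest. Iterating the parabolic comparison on $[0,M]$ around the period does not close as stated: you have no ordering at $z=M$, and appealing to $T$-periodicity still presupposes an initial-time ordering that is part of what you are trying to prove. The comparison that actually does the job is the $\rho^{*}$-argument for entire $T$-periodic solutions on the half-line, as in Step~2 of the proof of Lemma~\ref{lem subso}. More economically, you can avoid the sliding altogether: $U_0(t,z;k,\ell)\leq U_1(t,z;k,\ell)$ for every $\ell$ (same equation, ordered Dirichlet data at $z=\ell$), and $U$ is constructed in the proof of Lemma~\ref{lem:p tsw k half} as the decreasing limit of $U_1$, so passing $\ell\to\infty$ gives $U^{*}\leq U$. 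Since in addition $U^{*}(t,\infty)=P(t)=U(t,\infty)$, $U^{*}(t,0)=0=U(t,0)$, and $U^{*}_z(t,0)\geq (U_0)_z(t,0;k,\ell_0)>0$ for any fixed $\ell_0>\ell^{*}(k,a)$, the $\rho^{*}$-comparison applied to the ordered pair $(U^{*},U)$ forces $U^{*}\equiv U$, and the positivity of $U^{*}_z$ is then inherited from $U$ rather than established separately.
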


\begin{proof}
It follows from Lemma \ref{lem:positive sol DD0} that the problem \eqref{p DD0} admits a unique solution $U_0(t, z; k, \ell)$ when
$\ell > \ell^*(k, a)$, and $U_0$ is strictly increasing in $\ell$.
Hence $U_0(t, z; k,\ell)$ converges as $\ell \to \infty$ to some function $U_\infty (t,z)$, in the topology of $L^\infty_{loc}([0,T]
\times [0,\infty))$. (The convergence is also true in the topology of $C^{1,2}_{loc} ([0,T]\times [0,\infty))$ by the parabolic estimates.)
Moreover, $U_\infty (t,\infty) =P(t)$ by Lemma \ref{lem:positive sol DD0}. Then a similar argument as
in the proof of \cite[Proposition 2.1]{DGP} shows that $U_\infty $ is nothing but the unique positive solution $U(t,z;k)$ of \eqref{p Dinfty} as in Lemma
\ref{lem:p tsw k half}(i). Consequently, $\|(U_0)_z(t,z;k,\ell) - U_z(t,z;k)\|_{C([0,T] \times [0,1])} \to 0 $ as $\ell \to \infty$.
This proves the lemma.
\end{proof}

\subsection{Periodic traveling waves}
Based on the results in the previous subsection, we now construct several kinds of
traveling waves of \eqref{p}$_1$.

\medskip
\noindent
{\bf (I). Periodic rightward traveling semi-waves}.
In this part we construct a traveling semi-wave which is periodic in time and is used to
characterize spreading solutions near the right boundaries. First we present a lemma.

\begin{lem}\label{lem subso}
Assume $\beta\in \PP$ satisfies $\bar{\beta}\geqslant 0$. If $r_i\in \PP_+$ satisfies $\ov{r_i} < \bar{\beta} +\bar{c}$ ($i=1, 2, 3$),
$r_1 \leqslant A[\beta-r_1],\ A[\beta -r_2] =r_2$ and $A[\beta - r_3] \leqslant r_3$,
then $r_1 \leqslant r_2 \leqslant r_3$.
\end{lem}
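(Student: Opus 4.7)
My plan is to translate each relation $r=A[\beta-r]$ (and its sub-/super-solution analogue) into a one-sided Stefan-type free boundary problem and then apply the comparison principle. For each $i\in\{1,2,3\}$, the hypothesis $\overline{r_i}<\bar\beta+\bar c$ gives $\overline{\beta-r_i}>-\bar c$, so Lemma \ref{lem:p tsw k half}(i) furnishes a $T$-periodic $U_i(t,z):=U(t,z;\beta-r_i)$ with $U_i(t,0)=0$, $U_i(t,+\infty)=P(t)$, $(U_i)_z>0$. Setting $h_i(t):=\int_0^t r_i(s)\,ds$ and $u_i(t,x):=U_i(t,h_i(t)-x)$ for $x\leqslant h_i(t)$, a direct calculation shows that $u_i$ satisfies the equation in \eqref{p} on $\{x\leqslant h_i(t)\}$, together with $u_i(t,h_i(t))=0$, $u_i(t,-\infty)=P(t)$, $h_i'(t)=r_i(t)$, and $-\mu(t)(u_i)_x(t,h_i(t))=A[\beta-r_i](t)$. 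The three given hypotheses then say exactly that $(u_1,h_1)$ is a subsolution, $(u_2,h_2)$ is an exact solution, and $(u_3,h_3)$ is a supersolution of the one-sided free boundary problem on $\{x\leqslant h(t)\}$ with limit $P(t)$ at $-\infty$.

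Next I would apply the standard comparison principle after a space translation. For every $c>0$, the translated pair $(u_2(t,\cdot-c),\,h_2(t)+c)$ is again an exact solution of the same one-sided FBP. Since $U_2(0,z)$ increases to $P(0)$ as $z\to\infty$ while $U_1(0,\cdot)\leqslant P(0)$, choosing $c$ large enough ensures $u_1(0,x)\leqslant u_2(0,x-c)$ on $(-\infty,0]$ and $h_1(0)=0\leqslant c$; the comparison principle then forces $h_1(t)\leqslant h_2(t)+c$ for all $t\geqslant 0$. Using the $T$-periodic expansion $h_i(t)=\bar r_i\, t+O(1)$ and dividing by $t$ yields $\overline{r_1}\leqslant\overline{r_2}$, and an analogous argument with $(u_3,h_3)$ in place of $(u_2,h_2)$ gives $\overline{r_2}\leqslant\overline{r_3}$.

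To upgrade this cumulative inequality to the pointwise comparison $r_1\leqslant r_2$, I would use the moving-frame substitution $V(t,z):=U_2(t,z+\phi(t))$ with $\phi(t):=\phi_0+\int_0^t(r_2-r_1)(s)\,ds$; a direct computation shows that $V$ satisfies the same PDE as $U_1$. When $\overline{r_1}=\overline{r_2}$ the shift $\phi$ is $T$-periodic, and by choosing $\phi_0$ sufficiently large we may assume $\phi(t)\geqslant 0$, so that $V(t,0)=U_2(t,\phi(t))\geqslant 0=U_1(t,0)$ and $V,U_1\to P$ as $z\to\infty$. A maximum-principle argument for the linear parabolic equation satisfied by $W:=V-U_1$ on $[0,T]\times[0,\infty)$ then gives $W\geqslant 0$, and differentiating at $z=0$ combined with the strict monotonicity of $A[\cdot]$ from Lemma \ref{lem:p tsw k half}(i) yields $r_2\geqslant r_1$ pointwise. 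The strict case $\overline{r_1}<\overline{r_2}$ is reduced to the equal-average case by a continuity/approximation argument that uses the continuous dependence of $A[k]$ on $k$ established in Lemma \ref{lem:p tsw k half}(i).

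The main obstacle is the pointwise upgrade: the shift trick closes cleanly only when the two averages agree, because otherwise $\phi$ is not $T$-periodic. Handling the strict inequality of averages, and rigorously justifying the maximum principle on the half-line time-periodic problem for $W$, are the technical points that demand the most care; both rely crucially on the strict monotonicity and continuity of the map $k\mapsto A[k]$.
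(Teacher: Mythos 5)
Your approach is genuinely different from the paper's, and it has two concrete gaps that your own closing paragraph already hints at.

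\textbf{Gap 1: the initial ordering $u_1(0,\cdot)\leqslant u_2(0,\cdot-c)$.} You claim that because $U_2(0,z)\nearrow P(0)$ and $U_1(0,\cdot)\leqslant P(0)$, a large enough shift $c$ makes $U_1(0,z)\leqslant U_2(0,z+c)$ for all $z\geqslant 0$. This does not follow: both profiles approach $P(0)$ from below as $z\to\infty$, and without comparing their tail decay rates you cannot rule out that $P(0)-U_2(0,z+c)$ eventually exceeds $P(0)-U_1(0,z)$ no matter how large $c$ is. The decay rate at $+\infty$ depends on the drift coefficient, which is $\beta-r_1$ for $U_1$ and $\beta-r_2$ for $U_2$, so there is no a priori reason for the rates to match; moreover $(H_1)$ is not assumed in this lemma, so you cannot even assert exponential decay. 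The paper sidesteps exactly this difficulty by comparing $u_2$ not with $u_1$ but with $\rho u_1$ for $\rho<1$: since $u_1,u_2\to P(t)$ as $x\to-\infty$, the inequality $u_2>\rho u_1$ holds automatically in the tail for any $\rho<1$, and the Fisher-KPP concavity of $f(t,u)/u$ makes $\rho u_1$ a strict subsolution. This is the key structural idea of the $\rho^*$ sliding argument in the paper's Step 2, and your proposal has no substitute for it.

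\textbf{Gap 2: the ``continuity/approximation'' reduction.} Your moving-frame argument requires $\phi(t)=\phi_0+\int_0^t(r_2-r_1)$ to be $T$-periodic, i.e. $\bar r_1=\bar r_2$, and you defer the case $\bar r_1<\bar r_2$ to an unspecified approximation using continuity of $k\mapsto A[k]$. I do not see how that reduction goes: replacing $r_1$ by $r_1+(\bar r_2-\bar r_1)$ to equalize the averages destroys the hypothesis $r_1\leqslant A[\beta-r_1]$, and no other modification is indicated. The paper avoids periodicity issues entirely by \emph{not} working on the whole time line: it locates a crossing time $s^*$ where the contradiction hypothesis forces $r_1(s^*)=r_2(s^*)$, shifts $R_2$ by the finite amount $X=\int_{s_*}^{s^*}(r_1-r_2)$ so the two free boundaries touch exactly at $t=s^*$, and then runs the comparison only on the finite strip $[s_*,s^*]$. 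Because the comparison is over a finite time interval, there is no periodicity constraint on the shift. This is a cleaner route than trying to compare averages first and then upgrade to pointwise information; the paper's argument gives the pointwise inequality directly.

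In short, your plan correctly identifies the traveling semi-wave reformulation and the moving-frame computation, but it substitutes a direct comparison principle for the $\rho^*$ sliding argument that the paper uses to handle the tail at $-\infty$, and it has no workable mechanism for the case $\bar r_1\neq\bar r_2$. Both issues are resolved in the paper by the crossing-time plus $\rho^*$ argument, which you should study and adopt.
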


\begin{proof}
We only prove $r_1\leqslant r_2$ since $r_2\leqslant r_3$ can be proved in a similar way.
If $r_1 \equiv r_2$, then there is nothing left to prove. In what follows we assume
$r_1\not\equiv r_2$ and prove $r_1 \leqslant r_2$ by using a similar idea as in
the proof of \cite[Theorem 2.5]{DGP}.

\medskip
{\bf Step 1}. First we show that $r_1 \geqslant r_2$ is impossible.
Otherwise, $\beta -r_1 \leqslant, \not\equiv \beta -r_2$ and so
$r_1 \leqslant A[\beta -r_1] < A[\beta -r_2] =r_2 $ by the monotonicity of $A$, contradicting the assumption
$r_1 \geqslant r_2$. Therefore $r_1\not\geqslant r_2$ and so
\begin{equation}\label{r1<r2}
r_1 (s_0) < r_2 (s_0) \mbox{  for some } s_0\in [0,T).
\end{equation}

Now we prove $r_1 \leqslant r_2$ and suppose by way of contradiction that $r_1(s_*)>r_2(s_*)$
for some $s_*\in [0,T)$. Then
$$
s^* := \sup \{ \tau \in (s_*, s_* +T)  : r_1(t) > r_2 (t) \mbox{ for } t\in [s_*, \tau)\}
$$
is well-defined by \eqref{r1<r2}. So we have
\begin{equation}\label{r1>r2s*}
r_1(t)>r_2 (t) \mbox{ for } t\in [s_*, s^*),\quad r_1(s^*) =r_2(s^*).
\end{equation}
Denote
$$
R_1(t):= \int_{s_*}^t r_1(t) dt,\quad R_2(t):= \int_{s_*}^t r_2(t) dt + X \mbox{ with }
X:= \int_{s_*}^{s^*} [r_1-r_2]dt.
$$
Then $R_1(t) <R_2(t)$ for $t\in [s_*, s^*)$ and $ R_1(s^*) =R_2(s^*)$ (denoted it by $x^*$).
For $i=1$ or $2$, the problem \eqref{p Dinfty} with $k=\beta -r_i$ (note that $\bar{\beta} - \ov{r_i} >-\bar{c}$ by our assumption)
has a maximal bounded solution $U(t,z; \beta-r_i)$, which is positive for $z>0$.
To derive a contradiction, let us consider the functions $u_i (t,x) :=U(t, R_i(t)-x; \beta -r_i)$.
It is easy to check that
\begin{equation}\label{VV1}
\left\{
\begin{array}{ll}
u_{it}  = u_{ixx} -\beta(t) u_{ix} + f(t, u_i), &  x< R_i(t),\ t\in[s_*, s^*],\\
u_i (t,R_i(t))=0,\ \ u_i (t, -\infty)= P(t),  & t\in [s_*, s^*],
\end{array}
\right.
\end{equation}
and, for $t\in[s_*, s^*]$,
\begin{equation}\label{rVrV}
 r_1(t)\leqslant -\mu(t) u_{1x} (t,R_1(t)) = A[\beta -r_1](t),\quad  r_2(t)=-\mu(t) u_{2x} (t,R_2(t)) =A[\beta -r_2](t).
\end{equation}
Set $W(t,x):= u_2 (t,x)-u_1(t,x)$ for $(t,x)\in \Omega :=\{(t,x)  :   x < R_1(t),\ t\in [s_*, s^*]\}$.
In the next step we will prove a claim: $W(t,x)>0$ in $\Omega\backslash \{(s^*, x^*)\}$ and $W_x (s^*, x^*)<0$.
Once this claim is proved, we have $u_{1x} (s^*,x^*) > u_{2x} (s^*,x^*)$. Combining with \eqref{rVrV} we derive $r_1(s^*) < r_2(s^*)$,
contradicting \eqref{r1>r2s*}.  This completes the proof for $r_1 \leqslant r_2$.

\medskip
{\bf Step 2}. To prove the claim: $W(t,x)>0$ over $\Omega\backslash \{(s^*, x^*)\}$ and $W_x (s^*, x^*)<0$.

For each fixed $t\in [s_*, s^*]$, since $R_1(t)\leqslant R_2(t)$ and since $u_i (t,-\infty)=P(t)$,
we have $u_2(t, x)> \rho u_1(t, x)$ for $x<R_1(t)$ provided $\rho >0$ is sufficiently small. So
$$
\rho^* :=\sup \{\rho>0  : u_2(t, x)> \rho u_1(t, x) \mbox{ for } x<R_1(t),\ t\in [s_*, s^*]\}
$$
is well-defined, and $0<\rho^* \leqslant 1$. Using the Fisher-KPP property in $\HH$, one can show that
$$
(\rho^* u_1)_t - (\rho^* u_1)_{xx} + \beta(t) (\rho^* u_1)_x -f(t, \rho^* u_1) \leqslant 0,\quad x<R_1(t),\ t\in [s_*, s^*],
$$
that is, $\rho^* u_1$ is a lower solution of \eqref{VV1}. By the strong comparison principle and
the Hopf lemma we have
$$
W^*(t,x) := u_2 (t,x) - \rho^* u_1(t,x) >0 \mbox{ in } \Omega\backslash \{(s^*, x^*)\}\quad
\mbox{ and } \quad W^*_x(s^*, x^*) <0.
$$

The claim is proved if $\rho^* =1$. Suppose by way of contradiction that $0<\rho^*<1$.
Then by the definition of $\rho^*$, for any sequence of positive numbers $\varepsilon_n \to0$, there exists
$(t_n,x_n)$ with $t_n\in [s_*, s^*]$ and $x_n < R_1(t_n)$ such that
\begin{equation}\label{V2V1}
u_2(t_n, x_n) < (\rho^* +\varepsilon_n ) u_1(t_n, x_n) \mbox{ for }  n\geqslant 1.
\end{equation}
By passing to a subsequence, we may assume that $t_n\to \hat{t} \in[s_*, s^*]$.
We show that $x_n$ has a lower bound that is independent of $n$. Otherwise, by passing to a
subsequence we may assume that $x_n\to -\infty$ as $n\to\infty$. Then
$R_2(t_n) -x_n\geqslant R_1(t_n)-x_n\to\infty$ and hence $u_i(t_n,x_n)\to P(\hat{t})$ as $n\to\infty$.
It follows from \eqref{V2V1} that $P(\hat{t})\leqslant  \rho^* P(\hat{t})$, contradicting our assumption $\rho^* \in (0,1)$.
This proves the boundedness of $x_n$, and so we may assume without loss of generality
that $x_n\to\hat{x}$ as $n\to \infty$. This, combining with \eqref{V2V1} again, leads to
$W^*(\hat{t}, \hat{x})\leqslant 0$. Since $W^*>0$ in $\Omega\backslash \{(s^*, x^*)\}$,
we necessarily have $(\hat{t}, \hat{x}) = (s^*, x^*)$, $W^*(\hat{t}, R_1(\hat{t}))=0$ and
$W^*_x(\hat{t}, R_1(\hat{t}))<0$. By continuity we can find positive constants $\epsilon_1$ and $\delta_1$ such that
\[
W_x^* (t,x)<- \delta_1 \mbox{ for }  x\in [R_1(t)-\epsilon_1, R_1(t)],\ t\in[s^* -\epsilon_1, s^*].
\]
This implies that
\[
W^*(t_n, x_n)\geqslant  W^*(t_n, x_n) - W^*(t_n, R_1(t_n)) \geqslant \delta_1 [R_1(t_n)- x_n]\ \mbox{ for all large } n.
\]
On the other hand, it follows from $u_1(t, R_1(t))=0$ and $- u_{1x} (t,x)= U_z (t,R_1(t)-x; \beta -r_1) < C$
(for some $C>0$) that
\[
u_1(t_n, x_n)=  u_1(t_n, x_n) - u_1(t_n, R_1(t_n)) \leqslant  C [R_1(t_n)-x_n] \mbox{ for all large } n.
\]
Thus for large $n$ we have
\[
u_2(t_n, x_n)\geqslant \rho^* u_1(t_n, x_n)+\delta_1[R_1( t_n)- x_n] \geqslant \Big(\rho^*+\frac{\delta_1}{C}\Big) u_1(t_n, x_n),
\]
which contradicts \eqref{V2V1}. This proves $\rho^*=1$ and so the claim is true.
\end{proof}

\begin{remark}\label{rem:unique r}
\rm
There are two simple consequences following from the previous lemma, one is $r_1 \leqslant, \not\equiv r_2$ when
$r_1 \leqslant,\not\equiv A[\beta-r_1]$ and $A[\beta -r_2] =r_2$; another one is $r_1 \equiv r_2$ when $r_i = A[\beta-r_i]\ (i=1,2)$,
which implies that, for each $\beta\in \PP$ with $\bar{\beta}\geqslant 0$, the equation $r= A[\beta -r]$ has at most
one solution $r$.
\end{remark}

On the existence and the properties of the solution of $r=A[\beta -r]$ we have the following key result.

\begin{prop}\label{prop:tsw right}
Assume $\beta \in \PP$ satisfies $\bar{\beta}\geqslant 0$. Then there exists a unique function $r (t;\beta)\in \PP_+$ with
$0< \ov{r(t;\beta)} < \bar{\beta} +\bar{c}$ such that $u(t,x)=U(t,R(t;\beta)-x;\beta-r)$
(with $R(t;\beta) := \int_0^t r(s;\beta) ds$) solves the equation \eqref{p}$_1$ for $t\in \R,\ x< R(t)$,
and $r(t;\beta)=-\mu(t)u_x(t,R(t;\beta))= A[\beta-r]$.

Moreover,
\begin{itemize}
\item[(i)] both $r(t;\beta)$ and $\beta- r(t;\beta)$ are increasing in $\beta$ in the sense that
      $r(t;\beta_1) < r(t;\beta_2)$ and $\beta_1 - r(t;\beta_1)\leqslant,\not\equiv
    \beta_2 - r(t;\beta_2)$ if $\beta_1,\, \beta_2\in \PP$  satisfy $\overline{\beta_1},\, \overline{\beta_2}\geqslant 0$
    and $\beta_1 \leqslant , \not\equiv \beta_2$;

\item[(ii)] let $\theta\in \PP^0$ be a given function and consider $\beta$ with ``shape" $\theta$, that is,
consider $\beta := b +\theta $ for $b\geqslant 0$. Then $\min_{t\in [0,T]} r(t; b +\theta ) \to \infty$ and
$b- \overline{r(t;b +\theta )} \to \infty$ as $b\to \infty$.
\end{itemize}
\end{prop}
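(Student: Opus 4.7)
The plan is to realize $r(\cdot;\beta)$ as a fixed point of the map $F(r) := A[\beta - r]$ on a suitable subset of $\PP$, and then read off the remaining properties. Since $A[\cdot]$ is monotone increasing by Lemma \ref{lem:p tsw k half}(i), one has $0 \leq F(r) \leq A[\beta]$ whenever $0 \leq r$. Setting $M := \|A[\beta]\|_{L^\infty([0,T])}$, the closed convex set $\mathcal{K} := \{r \in \PP : 0 \leq r(t) \leq M\}$ is mapped into itself by $F$. The continuity of $F:\mathcal{K} \to \mathcal{K}$ in the $C^{\nu/2}$ topology follows from the continuity statement for $A[\cdot]$ in Lemma \ref{lem:p tsw k half}(i), and the relative compactness of $F(\mathcal{K})$ follows from standard parabolic Schauder estimates for the periodic solutions $U(\cdot,\cdot;\beta - r)$ of \eqref{p Dinfty}. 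Schauder's theorem then delivers some $r \in \mathcal{K}$ with $r = A[\beta - r]$, and uniqueness of such $r$ satisfying $\bar r < \bar\beta + \bar c$ is already recorded in Remark \ref{rem:unique r}.

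Next I would verify the remaining structural assertions for this $r$. If $r \equiv 0$, then $A[\beta] \equiv 0$, contradicting the lower bound $A[\beta] \geq \mu\delta > 0$ from Lemma \ref{lem:p tsw k half}(i) (valid since $\bar\beta \geq 0 > -\bar c$); hence $r \not\equiv 0$, and the representation $r(t) = \mu(t) U_z(t,0;\beta - r)$ combined with the positive lower bound on $U_z$ forces $r \in \PP_+$. The inequality $\bar r < \bar\beta + \bar c$ follows by contradiction: if $\bar r \geq \bar\beta + \bar c$, then $\overline{\beta - r} \leq -\bar c$, and Lemma \ref{lem:p tsw k half}(ii) forces $U(\cdot,\cdot;\beta - r) \equiv 0$, contradicting $r > 0$. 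Finally, a direct computation using $R'(t) = r(t)$ together with the PDE satisfied by $U(t,z;\beta - r)$ confirms that $u(t,x) := U(t, R(t) - x;\beta - r)$ solves $u_t = u_{xx} - \beta(t) u_x + f(t,u)$ on $\{x < R(t)\}$ with $r(t) = -\mu(t) u_x(t, R(t))$.

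For part \textbf{(i)}, given $\beta_1 \leq, \not\equiv \beta_2$ of nonnegative means and $r_i := r(\cdot;\beta_i)$, the strict monotonicity of $A[\cdot]$ applied to $\beta_1 - r_1 \leq,\not\equiv \beta_2 - r_1$ gives $r_1 = A[\beta_1 - r_1] < A[\beta_2 - r_1]$ pointwise, so $r_1$ is a strict subsolution of $r = A[\beta_2 - r]$; Lemma \ref{lem subso} then yields $r_1 \leq r_2$, and the collapse $r_1 \equiv r_2$ is excluded by the same strict-monotonicity argument. Feeding $r_1 \leq,\not\equiv r_2$ back into $r_i = \mu U_z(\cdot,0;\beta_i - r_i)$ and interpolating along $\beta_s := (1-s)\beta_1 + s\beta_2$, the strict pointwise monotonicity of $U_z(t,0;\cdot)$ from Lemma \ref{lem:p tsw k half}(i) then upgrades the ordering to the pointwise $r_1(t) < r_2(t)$ and simultaneously to $\beta_1 - r_1 \leq,\not\equiv \beta_2 - r_2$. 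For part \textbf{(ii)}, part (i) makes $b \mapsto r_b := r(\cdot;b+\theta)$ monotone, so $\lim_{b\to\infty} \min r_b$ and $\lim_{b\to\infty}(b - \bar r_b)$ exist in $(0,\infty]$. Suppose towards contradiction that $b - \bar r_b$ remains bounded along some $b_n \to \infty$; writing $k_n := (b_n + \theta) - r_{b_n}$, the mean $\bar k_n$ is bounded. Parabolic Schauder estimates for the periodic solutions $U(\cdot,\cdot;k_n)$ of \eqref{p Dinfty} applied to $r_{b_n} = \mu U_z(\cdot,0;k_n)$ then bound the oscillation of $r_{b_n}$, and hence of $k_n$, uniformly in $n$; along a subsequence $k_n \to k_\infty$ in $\PP$, and the continuity of $A[\cdot]$ forces $r_{b_n}$ to be uniformly bounded --- contradicting $\bar r_{b_n} \to \infty$, which is forced by the assumption on $b_n - \bar r_{b_n}$. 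The limit $\min r_b \to \infty$ follows by an analogous compactness argument applied after shifting. The main obstacle I anticipate is this last step: justifying that the oscillation of $r_b$ remains bounded as $b \to \infty$ demands uniform parabolic estimates for \eqref{p Dinfty} when the advection coefficient itself is being varied by large amounts, and one must preclude a boundary-layer phenomenon near $z = 0$ that could otherwise break uniform Schauder-type control.
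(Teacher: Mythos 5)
Your existence argument via Schauder on a closed bounded order interval, the uniqueness via Remark~\ref{rem:unique r}, and the deduction $r\in\PP_+$ with $\bar r<\bar\beta+\bar c$ are essentially the paper's Step~0, up to a minor reshuffling (to avoid the appearance of circularity you should first use $r\not\equiv 0$ together with Lemma~\ref{lem:p tsw k half}(ii) to force $\ov{\beta-r}>-\bar c$, and only then invoke the positive lower bound on $U_z$ to upgrade to $r\in\PP_+$; the paper does exactly this). Your derivation of $r_1\leqslant,\not\equiv r_2$ in part~(i) via $r_1=A[\beta_1-r_1]<A[\beta_2-r_1]$ and Lemma~\ref{lem subso} is also the paper's argument.

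The rest of part~(i) is where the proposal genuinely breaks down. You need to show $\beta_1-r_1\leqslant,\not\equiv\beta_2-r_2$, i.e.\ $r_2-r_1\leqslant\beta_2-\beta_1$, and this does \emph{not} follow from ``strict pointwise monotonicity of $U_z(t,0;\cdot)$'' plus ``interpolating along $\beta_s$.'' Knowing $r_1\leqslant,\not\equiv r_2$ says nothing a priori about whether $r_2-r_1$ overshoots $\beta_2-\beta_1$; one must rule that out, and the interpolation phrase is not an argument (it would at minimum require differentiability of $r(\cdot;\beta)$ in the parameter and a sign on the derivative, neither of which you establish). The paper's mechanism is concrete and worth noting: set $r_3:=\beta_2-\beta_1+r_1$. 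Then $\beta_2-r_3=\beta_1-r_1$, hence $A[\beta_2-r_3]=r_1\leqslant,\not\equiv r_3$, so $r_3$ is a supersolution of the $\beta_2$-equation; Lemma~\ref{lem subso} then gives $r_2\leqslant,\not\equiv r_3$, which is precisely $\beta_1-r_1\leqslant,\not\equiv\beta_2-r_2$, and feeding that back through the strict monotonicity of $A$ upgrades to $r_1<r_2$ pointwise. Without the introduction of $r_3$ you have no control on $r_2-r_1$.

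In part~(ii) the gap is the one you yourself flag, and it is not a gap that can be waved past. Your compactness scheme asks for an oscillation bound on $r_b=\mu U_z(\cdot,0;\beta_b-r_b)$ that is uniform as the drift coefficient $\beta_b-r_b$ grows without bound; precisely because the drift is unbounded, interior Schauder estimates degenerate and a boundary layer near $z=0$ is exactly what can occur, so the estimate you need has not been established. The paper sidesteps this entirely by a constructive argument: for the lower bound $\min_t r(t;b+\theta)\to\infty$ it builds an explicit compactly supported subsolution from a homogeneous ODE traveling wave $q$ with $q_z(0)>(1-2\delta)b\,s_0$, yielding $A[b+\theta-\delta b]>\delta b$ and hence $r(t;b+\theta)\geqslant\delta b$ via Lemma~\ref{lem subso}; for $b-\ov{r(t;b+\theta)}\to\infty$ it argues indirectly by comparing a shifted profile $U(t,z-\Theta_1(t);\,\cdot)$ against $U_1(t,\cdot\,;C_1,\ell)$ and evaluating at a minimizer $t_1$ of $\Theta_1$, where $\Theta_1'(t_1)=0$ pins $r(t_1;b+\theta)=\ov{r(t;b+\theta)}+\theta(t_1)\to\infty$ against a fixed bound $\mu(t_1)U_{1z}(t_1,0;C_1,\ell)$. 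Your ``analogous compactness argument applied after shifting'' for $\min_t r_b\to\infty$ is likewise not a proof. As it stands, part~(ii) is not established by the proposal.
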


\begin{proof}
By Lemma \ref{lem:p tsw k half}, for any $r\in \PP$, the problem \eqref{p Dinfty} with $k= \beta -r$ has a maximal
(bounded and nonnegative) solution $U(t,z; \beta -r)$, and $A[\beta-r](t)=\mu(t)U_z(t,0; \beta-r)$
is non-increasing in $r$.

Since $\bar{\beta}\geqslant 0$, when $r=r_*:=0$ we have $A [\beta-r_*] = A[\beta]
= \mu(t) U_z (t,0; \beta)> 0= r_* $. When $r= r^* := \bar{\beta} +\bar{c}+A [\beta]$ we have
$\bar{\beta} - \ov{r^*} = -\bar{c}-\overline{A [\beta]} < -\bar{c}$.
It follows from Lemma \ref{lem:p tsw k half} that $A [\beta-r^*]= \mu(t) U_z (t,0; \beta-r^* ) =0 <r^*$.
Set $S:=\{r\in \PP  : r_* \leqslant  r \leqslant  r^*\}$, then as in the proof of \cite[Theorem 2.4]{DGP}
one can show that the mapping $A [\beta -\cdot]$ maps $S$ continuously into a precompact set in $S$.
Using the Schauder fixed point theorem we see that there exists $r(t;\beta)\in S$ such that
$r(t;\beta) = A [\beta(t) -r(t;\beta)]$. Clearly, $r(t;\beta)\geqslant, \not\equiv 0$ and so
$A[\beta(t) -r (t;\beta)] = \mu(t) U_z(t,0;\beta -r)\geqslant, \not\equiv 0$.
This implies by Lemma \ref{lem:p tsw k half} that $\bar{\beta} -\bar{r}> -\bar{c}$ and $U_z(t,0;\beta -r) >0$
for all $t\in [0,T]$. This yields $r(t;\beta)\in \PP_+$.
The uniqueness of $r$ follows from Remark \ref{rem:unique r}. Finally, a
direct calculation shows that the function $u=U(t,R(t;\beta)-x; \beta-r)$ with $R(t;\beta):= \int_0^t r(s;\beta) ds$
solves the equation \eqref{p}$_1$ in $\R \times (-\infty, R(t;\beta))$.

(i). Assume $\beta_1,\, \beta_2 \in \PP$ satisfy $\overline{\beta_1}, \, \overline{\beta_2}\geqslant 0$ and
$\beta_1 \leqslant, \not\equiv \beta_2$. Denote $r_i := r(t;\beta_i)\ (i=1,2)$ for convenience.
Then $r_1 = A[\beta_1 -r_1] <  A[\beta_2 -r_1]$. The strict inequality follows from Lemma \ref{lem:p tsw k half}.
This, together with Remark \ref{rem:unique r}, implies that the unique fixed point $r_2$ of the mapping $A[\beta_2 -\cdot]$
satisfies $r_1 \leqslant, \not\equiv r_2$.

Similarly, since for $r_3:=\beta_2-\beta_1+r_1$ we have $A[\beta_2- r_3] = A[\beta_1-r_1]=r_1\leqslant, \not\equiv r_3$,
by Lemma \ref{lem subso} and Remark \ref{rem:unique r} we have $r_2 \leqslant, \not\equiv r_3$,
which implies that $\beta_1 - r_1 \leqslant,\not\equiv \beta_2 - r_2$.
Using Lemma  \ref{lem:p tsw k half} again we have $A[\beta_1 - r_1] < A[\beta_2 - r_2]$, that is,
$r_1 <r_2$.

(ii). For any given $\theta \in \PP^0$ we consider $\beta$ with the form $b+\theta$.
Under the assumption $\HH$, we can construct a Fisher-KPP type of nonlinearity $f_0(u)$ such that $f_0(u) \leqslant  f(t,u)$ ($t\in [0,T]$,
$u\geqslant 0$), $f'_0(0) =a_0 := \min_{t\in [0,T]} a(t) >0$ and $f_0(0)=f_0 (s_0)=0$ for some
$s_0 \in (0,P_0)$ with $P_0 := \min_{t\in [0,T]} P(t)$. Take $b_0>0$ large and $0<\delta<1/2$ small such that
\begin{equation}\label{b delta}
\mu(t)(1-2\delta)s_0>\delta,\quad \delta b_0+\theta(t)>0
\mbox{\ \  for } t\in [0,T].
\end{equation}
From now on we consider $b$ satisfying $b>b_0$. Consider the problem
$$
q_{zz} + (1-2\delta)b \; q_z + f_0(q) =0\ (z>0),\quad q(0)=0,
\quad q(\infty)=s_0 \mbox{ and } q_z(z)>0\ (z>0).
$$
By the phase plane analysis as in \cite{GLZ} we see that this problem has a unique solution $q$. Denote $\hat{q}:= q_z$,
then
$$
\frac{d \hat{q}}{dq} = (2\delta -1)b -\frac{f_0(q)}{\hat{q}} < (2\delta -1) b, \quad q\in [0,s_0).
$$
Integrating this inequality over $q\in [0,s_0)$ we have
\begin{equation}\label{q_z(0)>}
q_z(0)=\hat{q}(0) >(1-2\delta) b\, s_0.
\end{equation}

For sufficiently large $\ell>0$, consider the problem
$$
\left\{
 \begin{array}{ll}
 v_t=v_{zz}+(b+\theta-\delta b)v_z +f(t,v),
 & z\in (0,\ell),\ t>0,\\
 v(t,0)=0,\ v(t,\ell) = P^0, & t>0,\\
 v(0,z)=P^0 \cdot \chi_{[0,\ell]} (z), & z\in [0,\ell].
 \end{array}
 \right.
$$
Thanks to \eqref{b delta}, we have $b+\theta-\delta b > (1-2\delta)b$. It follows from the comparison
principle that $v(t+ nT,z)\geqslant q(z)$ for all $z\in [0,\ell], t>0$ and integer $n>0$.
Taking limit as $n\to \infty$ we have
$$
U_1(t,z; b+\theta -\delta b,  \ell) \geqslant q(z), \quad z\in [0,\ell],\ t\in [0,T].
$$
Taking limit again as $\ell \to \infty$ we obtain
$$
U(t,z; b+\theta -\delta b ) \geqslant q(z),\quad z\geqslant 0,\ t\in [0,T].
$$
This, together with \eqref{b delta} and \eqref{q_z(0)>}, implies
that
$$
A[b+\theta -\delta b] =\mu(t) U_z(t,0; b+\theta -\delta b) \geqslant \mu(t) q_z(0) > \mu(t) (1-2\delta ) b\, s_0 >\delta b.
$$
In other words, $r_4 := \delta b$ satisfies $r_4 < A[b+\theta -r_4]$. Since $r_5 := r(t; b+\theta)$ satisfies
$r_5 = A[b +\theta -r_5]$, by Lemma \ref{lem subso} we have
\begin{equation}\label{rto1}
r(t; b+\theta) = r_5 \geqslant r_4 = \delta b\to\infty\ \ \mbox{ as }\ b\to\infty.
\end{equation}

Finally, let us employ an indirect argument to prove
\begin{equation}\label{ybto1}
y (b) := b-\overline{r(t; b+\theta)}\to \infty\ \
\mbox{ as }\ b\to \infty.
\end{equation}
When $b_2 > b_1 \geqslant 0$, it follows from (i) that $b_1 +\theta - r(t;b_1 +\theta) \leqslant,\not\equiv
b_2 +\theta - r(t;b_2 +\theta)$. Taking average over $t\in [0,T]$ we have
$b_1 - \ov{r(t;b_1 +\theta)} < b_2  - \ov{r(t;b_2 +\theta)}$. Hence $y(b)$ is a strictly
increasing function. Assume that, for some constant $C_1 > 0$, $y(b)\leqslant  C_1 $ for all $b>0$, we are
going to derive a contradiction. Set
$$
\begin{array}{c}
\theta_1(t):= \theta(t) - r(t;b+\theta) + \ov{r(t;b+\theta)}\in \PP^0,\quad \Theta_1(t):= \int_0^t \theta_1(s) ds,\\
\mbox{and \ \ } v_1(t,z):= U(t,z-\Theta_1(t); b+\theta - r(t;b+\theta)).
\end{array}
$$
Then $v_1$ satisfies
$$
\left\{
 \begin{array}{ll}
 v_{1t}=v_{1zz}+\big(b-\ov{r(t;b+\theta)}\big)v_{1z}+ f(t,v_1),
 & z> \Theta_1(t),\ t\in [0,T],\\
 v_1 (t, \Theta_1(t))=0, & t>0,\\
 v_1 (0,z)= v_1(T,z),\ \ v_{1z}(t,z)>0,
 & z\geqslant \Theta_1(t),\ t\in [0,T].
 \end{array}
 \right.
$$
Assume $z_1:= \min_{t\in [0,T]} \Theta_1(t)$ is attained at $t=t_1\in [0,T)$. Since $\Theta_1(t)$
is a $C^1$ function we have $\Theta'_1 (t_1) = \theta_1(t_1)= 0$. Combining with
$\min_{t\in [0,T]} r(t;b+\theta)\to \infty\ (b\to \infty)$ we have
\begin{equation}\label{rrto1}
r(t_1;b+\theta) =\theta(t_1)+\ov{r(t;b+\theta)}\to\infty
\ \ \ \mbox{ as }\ b\to\infty.
\end{equation}
For large $\ell$, we compare $v_1$ with the solution $v$ of the following problem
$$
\left\{
 \begin{array}{ll}
 v_t=v_{zz}+ C_1 v_z + f(t,v), & z\in (z_1,\ell + z_1),\ t>0,\\
 v(t,z_1)=0,\ v(t,\ell +z_1 ) = P^0 , & t>0,\\
 v(0,z)=P^0 \cdot \chi_{[z_1,\ell+z_1]} (z), & z\in [z_1,\ell +z_1].
 \end{array}
 \right.
$$
Using the comparison principle we have
$$
v_1(t+nT,z)\leqslant  v(t+nT,z)\mbox{ for } \Theta_1(t)
\leqslant  z\leqslant  \ell +z_1,\ t\in [0,T] \mbox{ and  any integer } n \geqslant 0.
$$
Taking limit as $n\to \infty$ we have
$$
v_1(t,z)\leqslant  U_1 (t,z-z_1;C_1 ,\ell),\quad \Theta_1(t) \leqslant  z\leqslant  \ell +z_1,\ t\in [0,T].
$$
In particular, at $t= t_1$ we have $z_1=\Theta_1 (t_1)$ and
$$
U_{1z} (t_1, 0; C_1 , \ell) \geqslant v_{1z}(t_1, z_1) = U_z(t_1, 0; b+\theta - r(t;b+\theta)),
$$
which implies that
$$r(t_1; b+\theta)=\mu(t_1)U_z(t_1,0; b+\theta- r(t;b+\theta)) \leqslant  \mu(t_1) U_{1z}(t_1, 0; C_1,\ell)<\infty,
$$
contradicting \eqref{rrto1}. Thus \eqref{ybto1} holds and the proof is complete.
\end{proof}

\begin{remark}\rm
This proposition is a key result in the following argument. We remark that the assumption $a(t)>0$
in $\HH$ is only used to ensure that $a_0 = \min a(t) >0$ in the above proof for (ii).
All the conclusions which are not related to this proposition (to say, for small advection
problems) can be extended to some cases where $a(t)$ changes sign but $\bar{a}>0$.
\end{remark}

\medskip
\noindent
{\bf (II). Periodic leftward traveling semi-waves}.

\begin{prop}\label{prop:tsw left}
Assume $0\leqslant  \bar{\beta}<\bar{c}$. Then there exists a unique function $l(t;\beta)\in\PP_+$ with
$0<\ov{l(t;\beta)} < \bar{c} -\bar{\beta}$ such that, with $L(t; \beta):=
\int_0^t l(s;\beta)ds$, $u(t,x)= U (t, x+ L(t;\beta); -\beta-l)$ solves the equation \eqref{p}$_1$ for
$t\in \R,\ x>-L(t;\beta)$,
and $l(t;\beta)= \mu(t) u_x (t, L(t;\beta)) = A [-\beta -l]$.
\end{prop}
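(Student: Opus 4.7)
The strategy is to mirror the proof of Proposition \ref{prop:tsw right}: I would first find, via Schauder's fixed point theorem, a function $l(t;\beta)\in\PP_+$ satisfying the fixed-point equation $l=A[-\beta-l]$; then a direct computation shows that $u(t,x)=U(t,x+L(t;\beta);-\beta-l)$ solves \eqref{p}$_1$ on $\{x>-L(t;\beta)\}$ and that the Stefan-type relation at the left free boundary $x=-L(t;\beta)$ reduces exactly to $l(t;\beta)=A[-\beta-l]$.

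For the fixed-point construction, I would choose $l_*:=0$ and $l^*:=\bar{c}-\bar{\beta}+A[-\beta]\in\PP_+$. Since $\overline{-\beta}=-\bar{\beta}>-\bar{c}$ by the hypothesis $\bar{\beta}<\bar{c}$, Lemma \ref{lem:p tsw k half}(i) gives $A[-\beta]>0$ with a uniform positive lower bound, so $A[-\beta-l_*]=A[-\beta]>l_*$ pointwise. At the other end, $\overline{-\beta-l^*}=-\bar{c}-\overline{A[-\beta]}<-\bar{c}$, so Lemma \ref{lem:p tsw k half}(ii) yields $U(\cdot,\cdot;-\beta-l^*)\equiv 0$ and hence $A[-\beta-l^*]\equiv 0<l^*$. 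Because $A[\cdot]$ is increasing in its argument (Lemma \ref{lem:p tsw k half}(i)), the map $\Phi:l\mapsto A[-\beta-l]$ is non-increasing in $l$ and therefore sends $S:=\{l\in\PP:l_*\leq l\leq l^*\}$ into itself. Continuity of $\Phi$ on $S$ and precompactness of $\Phi(S)$ are verified exactly as in Proposition \ref{prop:tsw right} using Lemma \ref{lem:p tsw k half}(i), so Schauder's theorem produces a fixed point $l=l(t;\beta)\in S$. Since $l\equiv 0$ is ruled out by $A[-\beta]>0$, the fixed-point identity and Lemma \ref{lem:p tsw k half}(i) then force $l\in\PP_+$ and $\bar{l}<\bar{c}-\bar{\beta}$ strictly (equality would give $A[-\beta-l]\equiv 0\neq l$).

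For uniqueness, I would prove the leftward analogue of Lemma \ref{lem subso}: if $l_1,l_2\in\PP_+$ with $\overline{l_i}<\bar{c}-\bar{\beta}$ satisfy $l_1\leq A[-\beta-l_1]$ and $A[-\beta-l_2]=l_2$, then $l_1\leq l_2$. The argument transplants verbatim to the mirror-image functions $u_i(t,x):=U(t,x+L_i(t);-\beta-l_i)$ on the domain $x>-L_i(t)$, with the signs of the drift and of the Hopf-lemma step flipped accordingly; Remark \ref{rem:unique r} then gives uniqueness of $l(t;\beta)$. The principal technical burden is re-executing this sliding/Hopf comparison in the reversed geometry, which is notationally heavy but conceptually identical. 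A conceptually cleaner route, which I would record as an alternative, is the reflection $y=-x$, $w(t,y):=u(t,-y)$, transforming \eqref{p}$_1$ into $w_t=w_{yy}+\beta(t)w_y+f(t,w)$, i.e., \eqref{p}$_1$ with $\beta$ replaced by $-\beta$. Inspection of the proof of Proposition \ref{prop:tsw right} shows that its existence and uniqueness conclusions use only the weaker hypothesis $\bar{\beta}>-\bar{c}$ (the assumption $\bar{\beta}\geq 0$ there enters solely in item (ii)); since $\overline{-\beta}=-\bar{\beta}>-\bar{c}$, applying that proposition to $-\beta$ produces $r(t;-\beta)\in\PP_+$ with all the required properties, and setting $l(t;\beta):=r(t;-\beta)$, $L(t;\beta):=R(t;-\beta)$ finishes the proof in one stroke.
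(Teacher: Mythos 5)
Your primary route is exactly the paper's: the authors take $S_l=\{l\in\PP:0\leqslant l\leqslant -\bar{\beta}+\bar{c}+A[-\beta]\}$, note that $l\mapsto A[-\beta-l]$ maps $S_l$ continuously into a precompact subset, and say the rest is as in Proposition~\ref{prop:tsw right}; your fixed-point construction, the verification that $0<\bar l<\bar c-\bar\beta$, and the appeal to the leftward analogue of Lemma~\ref{lem subso} for uniqueness are a faithful and correctly detailed expansion of that sketch. The reflection observation at the end ($y=-x$, $l(t;\beta):=r(t;-\beta)$) is a genuinely different and cleaner route the paper does not take: it collapses the whole proposition to Proposition~\ref{prop:tsw right}, at the cost of having to check that the existence and uniqueness arguments there (including Lemma~\ref{lem subso} and Remark~\ref{rem:unique r}) only use $\bar\beta>-\bar c$ rather than $\bar\beta\geqslant 0$. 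Your claim that this is so is right — the hypothesis $\bar\beta\geqslant 0$ enters Proposition~\ref{prop:tsw right} essentially only in part~(ii) (the $b\to\infty$ asymptotics) and in the monotonicity statement~(i), neither of which is needed to produce and uniquely characterize the fixed point — but since that inspection is precisely what makes the shortcut legitimate, it should be stated as a lemma rather than left as a remark if one actually wants to use the reflection as the proof.
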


\begin{proof}
Note that $A[-\beta - \cdot ]$ maps the set $S_l := \{l\in \PP : 0\leqslant l \leqslant -\bar{\beta}+\bar{c} +A[-\beta]\}$
continuously into a precompact set in $S_l$.  The rest proof is similar as that in Proposition \ref{prop:tsw right}.
\end{proof}

\medskip
\noindent
{\bf (III). Periodic traveling wave $Q \big( t, x + \bar{c} t -\int_0^t \beta(s) ds \big)$}.
It is known (cf. \cite{BH1,H1,HR}) that the equation $u_t = u_{xx} + f(t,u)$ has many {\it periodic
traveling waves} of the form $q \big(t, x + c_1 t; c_1 \big)$, where for any $c_1
\geqslant \bar{c}$, $q (t,z; c_1)$ is the solution of
$$
\left\{
 \begin{array}{l}
v_t = v_{zz} - c_1 v_z +f(t,v) \mbox{ and } v_z(t,z)>0
\quad \mbox{ for } t\in [0,T],\ z\in \R, \\
v (t,-\infty)=0,\ v(t,\infty)= P(t)
\mbox{ and } v(0,0)= (1/2)P_0 := (1/2) \min_{0\leqslant  t\leqslant  T} P(t) ,\\
 v (0,z) = v(T,z) \mbox{ for } z\in \R.
 \end{array}
 \right.
$$
Denote $Q(t,z) := q(t,z; \bar{c})$. Then $Q(t,z)$ is $T$-periodic in $t$ and $Q\big(t,x +\bar{c}t\big)$
is the periodic traveling wave of $u_t=u_{xx}+f(t,u)$ with minimal average speed $\bar{c}$.
It is easily seen that the function $u= Q^*(t,x) := Q \big( t, x + \bar{c} t -\int_0^t \beta(s) ds \big)$ solves the following problem
$$
\left\{
 \begin{array}{l}
 u_t = u_{xx} - \beta(t) u_x +f(t,u) \mbox{ and } u_x (t,x)>0
 \quad \mbox{ for } t, x\in \R, \\
 u(t,-\infty)=0,\ u(t,\infty)= P(t) \mbox{ and }
 u(0,0)= P_0 /2,\\
 u (t+T,x) = u(t,x +X) \mbox{ for } t,
 x\in \R \mbox{ and } X:= (\bar{c}-\bar{\beta})T.
 \end{array}
 \right.
$$
So $Q^*$ is a periodic traveling wave of \eqref{p}$_1$. We remark that $Q^*$ coincides with the definition
for periodic traveling waves as in \cite{BH1,H1,HR}. In fact, by setting $\mathcal{Q}(t,z):= Q \big(t,z -
\int_0^t \tilde{\beta} (s) ds \big)$ (which is $T$-periodic in $t$), we see that $Q^*$ can be expressed
as $\mathcal{Q}\big(t, x + (\bar{c} -\bar{\beta} )t \big)$.

\subsection{The set of critical advection functions}
When $\bar{\beta} >\bar{c}$, the periodic traveling wave $Q^*$ moves rightward with average speed
$\bar{\beta} -\bar{c}$, which can be used to characterize the motion of the back of the
solution $u$ (that is, the sharp increasing part of the solution, cf. \cite{GLZ}).
On the other hand, the periodic traveling semi-wave $U(t,R(t;\beta)-x; \beta -r(t;\beta))$
also moves rightward with speed $r(t; \beta)$, which can be used to characterize the propagation of the front
(that is, the sharp decreasing part of the solution, cf. \cite{DuLin, DuLou, GLZ}). A natural question
is: whether $\mathbf{v}(\beta):= \overline{r(t;\beta)} - [\bar{\beta} -\bar{c} ]$ is positive or
negative. For homogeneous problem, it was shown in \cite{GLZ} that $\mathbf{v}(\beta)$ is strictly
decreasing in $\beta$ and it has a unique zero $\beta^* \ (>\bar{c})$.
This is another critical value for $\beta$ (the first one is $\bar{c}$) which partitions the medium-sized and the large
advections. For time-periodic problem \eqref{p} we show that such critical advection functions also exist, but the
situation is much more complicated since it depends on the ``shape" $\tilde{\beta}$.
We will prove that the set of such functions can be expressed by the following equivalent sets.
\begin{equation}\label{def:B B'}
\BBB:= \{ B(\theta) +\theta  : \theta \in \PP^0\},\qquad \BBB' :=\{ A[\bar{c}+\omega ]+\bar{c}+\omega  : \omega  \in \PP^0\}.
\end{equation}

\begin{lem}\label{lem:B(theta)}
For any given $\theta\in \PP^0$, there exists a unique $B(\theta)\in\R$ such that
$B(\theta) >\bar{c}$ and
 \begin{equation}\label{B theta}
\begin{array}{c}
b-\bar{c}< \ov{r(t;b+\theta)} \mbox{ when } 0\leqslant b<B(\theta),\quad
b -\bar{c}=\ov{r(t; b+\theta)} \mbox{ when } b =B(\theta),\\
b-\bar{c} > \ov{r(t;b+\theta)} \mbox{ when } b>B(\theta).
\end{array}
\end{equation}
\end{lem}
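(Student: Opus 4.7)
Define the function
\[
y(b) := b - \bar{c} - \overline{r(t;b+\theta)}, \qquad b \in [0,\infty),
\]
so that the claim \eqref{B theta} is equivalent to showing that $y$ has a unique zero $B(\theta)$ in $[0,\infty)$, that $B(\theta) > \bar{c}$, and that $y$ changes sign at $B(\theta)$ from negative to positive. The plan is to extract all three properties from the monotonicity and continuity of the map $b \mapsto r(t;b+\theta)$ established in Proposition \ref{prop:tsw right}.

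\textbf{Step 1: strict monotonicity of $y$.} For $0 \leqslant b_1 < b_2$, set $\beta_i := b_i + \theta$, so that $\beta_1 \leqslant,\not\equiv \beta_2$ and $\overline{\beta_i} = b_i \geqslant 0$. Proposition \ref{prop:tsw right}(i) gives
\[
\beta_1 - r(t;\beta_1) \;\leqslant,\not\equiv\; \beta_2 - r(t;\beta_2).
\]
Averaging over $t\in[0,T]$ (using $\bar{\theta}=0$) yields
$b_1 - \overline{r(t;\beta_1)} < b_2 - \overline{r(t;\beta_2)}$, i.e., $y(b_1) < y(b_2)$. Hence $y$ is strictly increasing on $[0,\infty)$.

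\textbf{Step 2: boundary behavior.} For any $b \in [0,\bar{c}]$, since $r(t;b+\theta) \in \PP_+$ we have $\overline{r(t;b+\theta)} > 0$, so
\[
y(b) \;\leqslant\; -\overline{r(t;b+\theta)} \;<\; 0.
\]
In particular $y(\bar{c}) < 0$, so any zero of $y$ must satisfy $b > \bar{c}$. At the other end, Proposition \ref{prop:tsw right}(ii) states exactly that $b - \overline{r(t;b+\theta)} \to \infty$ as $b \to \infty$, hence $y(b) \to \infty$.

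\textbf{Step 3: continuity and conclusion.} The map $\beta \mapsto r(t;\beta)$ is continuous in the $C^{\nu/2}$-topology, inherited from the continuous dependence of $A[\cdot]$ in Lemma \ref{lem:p tsw k half}(i) together with the fact that $r(t;\beta)$ is the unique fixed point of $r = A[\beta-r]$ (Remark \ref{rem:unique r}). Hence $b \mapsto \overline{r(t;b+\theta)}$ is continuous, and so is $y$. Combining continuity, strict monotonicity, $y(\bar{c})<0$, and $y(b)\to\infty$, the intermediate value theorem produces a unique $B(\theta) \in (\bar{c},\infty)$ with $y(B(\theta)) = 0$; strict monotonicity then delivers the three sign statements in \eqref{B theta}.

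\textbf{Expected main difficulty.} The only non-trivial input is Step 1, whose proof is already encoded in Proposition \ref{prop:tsw right}(i) (itself relying on the delicate sliding/comparison argument of Lemma \ref{lem subso}). Everything else is book-keeping: monotonicity plus the two asymptotic sign checks automatically give existence, uniqueness, and the sharp partition. Thus the lemma reduces to packaging results that have already been established in this section.
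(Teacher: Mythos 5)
Your Steps 1 and 2 reproduce the paper's argument exactly: defining $y(b) := b - \bar{c} - \overline{r(t;b+\theta)}$, extracting strict monotonicity from Proposition \ref{prop:tsw right}(i), noting $y(\bar{c}) < 0$ since $r(\cdot;\beta)\in\PP_+$, and citing Proposition \ref{prop:tsw right}(ii) for $y(b)\to\infty$. The difference, and a genuine gap, is in Step 3. You assert that the fixed-point map $\beta \mapsto r(\cdot;\beta)$ is continuous, ``inherited'' from the continuity of $A[\cdot]$ in Lemma \ref{lem:p tsw k half}(i) together with uniqueness of the fixed point. That implication is not automatic: continuity of $A$ and uniqueness of fixed points do not by themselves give continuity of the fixed-point correspondence. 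To make your route work you would need a compactness argument (the $r(\cdot;\beta_n)$ all lie in a set that $A[\beta_n - \cdot]$ maps into a precompact set, extract a convergent subsequence, pass to the limit in $r = A[\beta-r]$, invoke uniqueness, and check that the limit still satisfies $\overline{\beta - r^*} > -\bar{c}$ so that the continuity hypothesis of Lemma \ref{lem:p tsw k half}(i) applies). None of this is spelled out in your sketch.

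The paper avoids all of this with a short, self-contained squeeze that falls directly out of your own Step 1. When $b_n \downarrow b_*$, monotonicity of $b - \overline{r(t;b+\theta)}$ gives
\begin{equation*}
b_* - \overline{r(t;b_*+\theta)} < b_n - \overline{r(t;b_n+\theta)},
\end{equation*}
while monotonicity of $r(t;\beta)$ in $\beta$ gives $\overline{r(t;b_*+\theta)} < \overline{r(t;b_n+\theta)}$. Combining,
\begin{equation*}
0 < \overline{r(t;b_n+\theta)} - \overline{r(t;b_*+\theta)} < b_n - b_*,
\end{equation*}
so $b \mapsto \overline{r(t;b+\theta)}$ is continuous from the right, and symmetrically from the left. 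This delivers continuity of $y$ using only Proposition \ref{prop:tsw right}(i) --- no compactness, no $C^{\nu/2}$-topology, no Schauder machinery. You should replace your Step 3 with this squeeze; it is strictly less work and relies only on facts you have already invoked.
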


\begin{proof}
For $\theta\in \PP^0$, define a function
$y:[\bar{c},\infty)\to\R$ by
\begin{equation}\label{def y}
y(b) := b - \bar{c} - \ov{r(t; b+\theta)}
\mbox{ for } b\geqslant \bar{c}.
\end{equation}
By Proposition \ref{prop:tsw right}, $y(b)$ is a strictly increasing function in $b\in [\bar{c}, \infty)$,
$y(\bar{c})<0$ and $y(b)>0$ for all large $b$.
The proof is complete once the continuity of $y(b)$ is established. In fact, for any $b_*\in[\bar{c},\infty)$,
assume that $b_n$ decreases and converges to $b_*$ as $n\to \infty$. It then follows from the monotonicity of $b-\ov{r(t;b+\theta)}$ with respect to $b$ in Proposition \ref{prop:tsw right} that $b_*-\ov{r(t;b_* +\theta)}<b_n-\ov{r(t;b_n+\theta)}$, which means that
\[
0<\overline{r(t; b_n +\theta)}-\overline{r(t; b_* +\theta)} <b_n -b_*.
 \]
This proves the continuity of $\ov{r(t; b+\theta)}$ on the right side
of $b_*$. When $b_* >\bar{c}$, the continuity of $\ov{r(t; b+\theta)}$
on the left side of $b_*$ is proved similarly. Thus $\ov{r(t; b+\theta)}$ is continuous in $b\in[\bar{c}, \infty)$, so is $y(b)$, as we wanted. The proof is complete.
\end{proof}

\medskip
From this lemma we see that the function $B(\theta)$ and the set $\BBB$ are well defined.
To understand further properties of $\BBB$ we now give another description for it.

\begin{lem}\label{lem:property of beta*}
Assume that $\beta^*:= A[\bar{c} +\omega ]+\bar{c} +\omega $ for some $\omega \in \PP^0$. Then
\begin{equation}\label{beta* 3}
\bar{\beta} - \bar{c} < \overline{r(t;\beta)} \mbox{ when } \beta <\beta^*,\quad
\bar{\beta} - \bar{c} = \overline{r(t;\beta)} \mbox{ when } \beta =\beta^*,\quad
\bar{\beta} - \bar{c} > \overline{r(t;\beta)} \mbox{ when } \beta >\beta^*.
\end{equation}
\end{lem}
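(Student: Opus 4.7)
The plan is to give a short direct proof using the fixed-point characterization of $r(t;\beta)$ together with the monotonicity from Proposition \ref{prop:tsw right}(i). The key observation is that the specific form $\beta^* = A[\bar{c}+\omega]+\bar{c}+\omega$ is built so that the function $r = A[\bar{c}+\omega]$ already satisfies $\beta^* - r = \bar{c}+\omega$, hence $A[\beta^* - r] = A[\bar{c}+\omega] = r$. By the uniqueness of fixed points of the map $r \mapsto A[\beta^*-r]$ (Remark \ref{rem:unique r}), this forces $r(t;\beta^*) = A[\bar{c}+\omega]$.

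With that identity in hand, the equality case is immediate: averaging gives
\[
\bar{\beta^*} = \overline{A[\bar{c}+\omega]} + \bar{c} + \bar{\omega} = \overline{r(t;\beta^*)} + \bar{c},
\]
since $\bar{\omega}=0$. Rearranging yields $\bar{\beta^*} - \bar{c} = \overline{r(t;\beta^*)}$, which is the middle statement of \eqref{beta* 3}.

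For the strict inequalities, I would invoke Proposition \ref{prop:tsw right}(i), which says that $\beta - r(t;\beta)$ is monotone in $\beta$ in the sense that $\beta_1 \leqslant,\not\equiv \beta_2$ implies $\beta_1 - r(t;\beta_1) \leqslant,\not\equiv \beta_2 - r(t;\beta_2)$. Since both sides are continuous functions and they differ on a set of positive measure, taking the average over $[0,T]$ turns this into a strict inequality between the means. Applied with $\beta_2 = \beta^*$ and $\beta_1 = \beta < \beta^*$, we obtain
\[
\bar{\beta} - \overline{r(t;\beta)} < \bar{\beta^*} - \overline{r(t;\beta^*)} = \bar{c},
\]
which rearranges to $\bar{\beta} - \bar{c} < \overline{r(t;\beta)}$. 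The case $\beta > \beta^*$ is completely symmetric.

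I do not anticipate a major obstacle here; the content of the lemma is essentially an algebraic reformulation of what has already been proved. The only subtle step is the passage from the pointwise $\leqslant,\not\equiv$ inequality to strict inequality of averages, but this is routine by continuity (both $\beta - r(t;\beta)$ and $\beta^* - r(t;\beta^*)$ lie in $\PP$, so the set where they differ is open). The heavy lifting has been done in Proposition \ref{prop:tsw right}.
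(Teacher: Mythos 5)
Your proof is correct and follows essentially the same path as the paper: you identify $r(t;\beta^*)=A[\bar{c}+\omega]$ via the fixed-point characterization and uniqueness, deduce the equality case by averaging, and pass to the strict inequalities through the monotonicity of $\beta - r(t;\beta)$ from Proposition \ref{prop:tsw right}(i). The paper asserts the strict inequality of averages without spelling out the continuity argument for why $\leqslant,\not\equiv$ of continuous functions gives strict inequality of means, but that detail is routine and you have supplied it correctly.
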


\begin{proof}
Denote $r_1 := A[\bar{c} +\omega]$. By the definition of $\beta^*$ we have $\beta^* - r_1 = \bar{c}+\omega$, and so
 $A[\beta^* - r_1] = A[\bar{c}+\omega] =r_1$. Thus $r(t;\beta^*)=r_1$ by Proposition \ref{prop:tsw right}.
Moreover, $\ov{\beta^*}- \ov{r_1} = \ov{\beta^*} - \ov{r(t; \beta^*)} = \bar{c}$. This proves the equality in \eqref{beta* 3}.

Now we consider the case $\beta<\beta^*$. It follows from Proposition \ref{prop:tsw right} that
$\beta - r(t;\beta) \leqslant,\not\equiv \beta^*- r(t;\beta^*)$. Hence
$\bar{\beta}- \overline{r(t;\beta)}< \overline{\beta^*}-\overline{r(t;\beta^*)}$.
This, together with the equality in \eqref{beta* 3}, yields that
$$
\overline{\beta}-\bar{c}<\overline{\beta^*}-\overline{r(t;\beta^*)}
+\overline{r(t;\beta)}-\bar{c}=\overline{r(t;\beta)},
$$
which proves the first inequality in \eqref{beta* 3}. The last inequality is proved similarly.
\end{proof}

\begin{lem}\label{lem:B=B'}
Let $\BBB$ and $\BBB'$ are defined by \eqref{def:B B'}, then $\BBB = \BBB'$.
\end{lem}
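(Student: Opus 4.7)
The plan is to show that both $\BBB$ and $\BBB'$ are characterizations of precisely those $\beta^*\in\PP$ for which the equality $\bar{\beta^*}-\bar{c}=\overline{r(t;\beta^*)}$ holds, using Lemmas \ref{lem:B(theta)} and \ref{lem:property of beta*}.

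For the inclusion $\BBB\subseteq\BBB'$, take any $\beta^*=B(\theta)+\theta$ with $\theta\in\PP^0$. The idea is to produce the required $\omega$ by reading off $\bar{c}+\omega$ from the shifted argument $\beta^*-r(t;\beta^*)$ that appears in Proposition \ref{prop:tsw right}. Concretely, set
\[
\omega:=\beta^*-r(t;\beta^*)-\bar{c}.
\]
Then $\omega\in\PP$, and by Lemma \ref{lem:B(theta)} applied to $b=B(\theta)$ we have $\bar{\beta^*}-\bar{c}=\overline{r(t;\beta^*)}$, so $\bar{\omega}=0$, i.e.\ $\omega\in\PP^0$. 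Since $\bar{c}+\omega=\beta^*-r(t;\beta^*)$, Proposition \ref{prop:tsw right} gives $A[\bar{c}+\omega]=A[\beta^*-r(t;\beta^*)]=r(t;\beta^*)$, and consequently
\[
A[\bar{c}+\omega]+\bar{c}+\omega=r(t;\beta^*)+\beta^*-r(t;\beta^*)=\beta^*,
\]
so $\beta^*\in\BBB'$.

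For the reverse inclusion $\BBB'\subseteq\BBB$, take $\beta^*=A[\bar{c}+\omega]+\bar{c}+\omega$ with $\omega\in\PP^0$, and let $\theta:=\tilde{\beta^*}$, $b:=\bar{\beta^*}$, so $\beta^*=b+\theta$. By Lemma \ref{lem:property of beta*} we have $\bar{\beta^*}-\bar{c}=\overline{r(t;\beta^*)}$, i.e.\ $b-\bar{c}=\overline{r(t;b+\theta)}$. Moreover $\overline{A[\bar{c}+\omega]}>0$ (since $\overline{\bar{c}+\omega}=\bar{c}>-\bar{c}$, the positive lower bound from Lemma \ref{lem:p tsw k half}(i) gives $A[\bar{c}+\omega]\geqslant\delta>0$), so $b>\bar{c}$. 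By the uniqueness of the zero of $y(b)=b-\bar{c}-\overline{r(t;b+\theta)}$ established in Lemma \ref{lem:B(theta)}, we conclude $b=B(\theta)$, and therefore $\beta^*=B(\theta)+\theta\in\BBB$.

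No step is genuinely difficult here: the whole argument is bookkeeping of averages and an application of the defining identity $r(t;\beta)=A[\beta-r(t;\beta)]$ from Proposition \ref{prop:tsw right}. The only point requiring a touch of care is checking that the candidate $\omega$ constructed in the first inclusion really lies in $\PP^0$ (periodicity and regularity are inherited from $\beta^*$ and $r(\cdot;\beta^*)\in\PP$, and the zero-average condition is precisely the equality supplied by Lemma \ref{lem:B(theta)}), and that in the second inclusion the mean $b=\bar{\beta^*}$ is strictly greater than $\bar{c}$ so that Lemma \ref{lem:B(theta)} is applicable.
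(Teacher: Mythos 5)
Your proof is correct and follows essentially the same approach as the paper's: in both directions you recover the auxiliary quantities (your $\omega=\beta^*-r(t;\beta^*)-\bar{c}$ equals the paper's $\theta-\tilde r_1$, and your $b=\bar{\beta^*}$, $\theta=\tilde{\beta^*}$ match the paper's $B^*=\bar r_0+\bar c$, $\theta=\omega+\tilde r_0$), with the only cosmetic difference being that you invoke Lemma \ref{lem:property of beta*} directly where the paper recomputes the fixed point. Nothing is missing; the argument is sound.
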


\begin{proof}
We first prove $\BBB' \subset\BBB$. For any given $\omega \in\PP^0$, set
$$
r_0 := A[\bar{c}+\omega], \quad \theta:=\omega  +\wt{r_0} , \quad B^*:= \ov{r_0} +\bar{c}
\mbox{\ \ and \ \ } \beta^* := A[\bar{c} +\omega ] + \bar{c} +\omega  =  B^* +\theta.
$$
Then $ r_0$ is a fixed point of $A[\beta^* -\cdot]$, and so $r(t;\beta^*)=r(t; B^* +\theta) = r_0$.
Thus the function $y(b)$ defined by \eqref{def y} satisfies
$$
y(B^*) = B^* -  \bar{c} - \ov{r(t; B^* +\theta)} =0,
$$
that is, $B^*=B(\theta)$. Thus $\beta^*=B(\theta)+\theta\in\BBB$.

Next we show that $\BBB\subset\BBB'$. For any given $\theta\in \PP^0$, denote $\beta_*
:= B(\theta) +\theta$, $r_1 (t):= r(t; \beta_* )$ and $\omega := \theta - \wt{r_1}$. It follows from Lemma \ref{lem:B(theta)} that $B(\theta)-\bar{c} =\ov{r_1}$, then
$$
\beta_*= B(\theta)+ \theta =\bar{c} +\ov{r_1} +\theta=\bar{c}+
\ov{r_1} +\omega +\wt{r_1}=\bar{c} +\omega + r_1,
$$
which implies that $\beta_*-r_1=\bar{c} +\omega $ and thus
$r_1= r(t;\beta_*)= A[\beta_* -r_1 ]= A[\bar{c} +\omega ]$.
Consequently we have $\beta_*=\bar{c} +\omega + r_1 =\bar{c} +\omega  + A[ \bar{c} +\omega  ]\in \BBB'$.
\end{proof}

\begin{remark}\rm
Lemmas \ref{lem:B(theta)} and \ref{lem:property of beta*} explain the constructions of
$\BBB$ and $\BBB'$, respectively. The equivalence in Lemma \ref{lem:B=B'} implies that,
for any $\theta\in \PP^0$, there exists $\omega\in \PP^0$ such that $B(\theta)+\theta =
A[\bar{c}+\omega] +\bar{c} +\omega$, and vice versa.
The notation in $\BBB$ is convenient when we regard $B(\theta)+\theta$ as an analogue of
the second critical value $\beta^*$ for homogeneous problems. Namely, for any
given ``shape" $\theta$ we have a critical value $B(\theta)$ which corresponds to the second
critical function $B(\theta)+\theta$. The notation in $\BBB'$, however, is convenient to
estimate the spreading speed $\ov{r(t;\beta^*)}$ of the periodic rightward traveling semi-wave
(which equals to the speed $\ov{\beta^*} -\bar{c}$ of the periodic traveling wave $Q^*$ when $\beta=\beta^*$).
It turns out that this speed is nothing but $\ov{A[\bar{c}+\omega]}$ for some $\omega\in \PP^0$.
\end{remark}

It is natural to ask whether $\BBB$ or $\BBB'$ can be defined as an equivalence class with the same
average, like $\{\beta  : \bar{\beta}=\ov{A[\bar{c}]}+ \bar{c}\}$ or $\{ \beta  : \bar{\beta} = B(0)\}$.
At the end of this subsection we show that the answer is generally negative.
The main reason is that $A[\bar{c}+\omega ]=\mu(t)U_z(t,0; \bar{c}+\omega )$ depends not only on $\omega $, but
also on $\mu(t)$.

\begin{lem}\label{lem:BBB}
For any given $\omega \in \PP^0$, there exists some $\mu\in \PP_+$ such that $\ov{\beta^*(\omega)} \not=
\ov{\beta^*(0)}$, where $\beta^*(\omega) := A[\bar{c}+\omega ] +\bar{c}+\omega $
and $\beta^*(0) :=  A[\bar{c}] +\bar{c}$.
\end{lem}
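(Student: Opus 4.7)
Since the half-line problem in Lemma~\ref{lem:p tsw k half} defining $U(t,z;k)$ does not involve $\mu$, the functions $\psi_\omega(t):=U_z(t,0;\bar{c}+\omega)$ and $\psi_0(t):=U_z(t,0;\bar{c})$ are independent of the choice of $\mu$. Using $\bar{\omega}=0$ one finds
\[
\ov{\beta^*(\omega)}-\ov{\beta^*(0)}=\frac{1}{T}\int_0^T \mu(t)\bigl[\psi_\omega(t)-\psi_0(t)\bigr]\,dt,
\]
so the lemma is equivalent to producing some $\mu\in\PP_+$ making this linear functional of $\mu$ nonzero. The case $\omega\equiv 0$ is vacuous, so I assume $\omega\not\equiv 0$.

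Granted that the continuous function $\phi:=\psi_\omega-\psi_0$ is not identically zero on $[0,T]$, the construction of $\mu$ is straightforward: if $\bar\phi\neq 0$ take $\mu\equiv 1$; otherwise $\phi\in\PP^0\setminus\{0\}$ and the choice $\mu:=C+\phi$ with $C>\|\phi\|_{\infty}$ lies in $\PP_+$ and satisfies $\int_0^T\mu\phi\,dt=\|\phi\|_{L^2([0,T])}^2>0$. The whole task therefore reduces to the key claim: \emph{if $\omega\in\PP^0\setminus\{0\}$ then $\psi_\omega\not\equiv\psi_0$ on $[0,T]$.}

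I would prove the key claim by contradiction. Assume $\psi_\omega\equiv\psi_0$, and write $U_1:=U(\cdot,\cdot;\bar{c})$, $U_2:=U(\cdot,\cdot;\bar{c}+\omega)$, $W:=U_2-U_1$ and $c(t,z):=\int_0^1 f_u\bigl(t,sU_2+(1-s)U_1\bigr)\,ds$. Then $W$ is $T$-periodic in $t$, satisfies $W(t,0)=W_z(t,0)=0$ and $W(t,\infty)=0$, and solves the forced linear parabolic equation
\[
W_t-W_{zz}-\bar{c}\,W_z-c(t,z)W=\omega(t)(U_2)_z(t,z),\qquad z>0.
\]
Evaluating this equation at $z=0$ and using $W_t(t,0)\equiv 0$ gives $W_{zz}(t,0)=-\omega(t)\psi_\omega(t)\not\equiv 0$, so $W$ is nontrivial; on the other hand, the vanishing of both $W$ and $W_z$ on $\{z=0\}$, together with $T$-periodicity and decay as $z\to\infty$, ought to force $W\equiv 0$ by unique continuation from the lateral boundary of the half-strip. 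The cleanest route to extract the contradiction is to multiply the equation by a suitable weighted test function (such as $(U_1)_z$ or $e^{-\lambda z}$ for appropriate $\lambda>0$), integrate over $(0,T)\times(0,\infty)$, and exploit $T$-periodicity together with the homogeneous Cauchy data to reduce the identity to $\int_0^T\omega(t)\Psi(t)\,dt=0$ for a strictly positive $T$-periodic weight $\Psi$, which is incompatible with $\omega\not\equiv 0$.

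The main obstacle is this last step: rigorously deducing $W\equiv 0$ from the overdetermined zero Cauchy data at $z=0$. Because $\omega$ changes sign on $[0,T]$, neither the Hopf boundary lemma nor the strong maximum principle applies directly to $W$. A workable fallback is to linearize at $\omega=0$, writing $\omega=\varepsilon\omega_0$ for small $\varepsilon$, and to show by an energy identity that the Fréchet derivative $\omega_0\mapsto V_z(\cdot,0)$ of $\omega\mapsto\psi_\omega-\psi_0$ is injective on $\PP^0\setminus\{0\}$; this establishes the key claim for small nontrivial $\omega$, which combined with the strict monotonicity from Lemma~\ref{lem:p tsw k half}(i) and a continuation along the homotopy $\lambda\mapsto\bar{c}+\lambda\omega$ should extend it to all $\omega\in\PP^0\setminus\{0\}$.
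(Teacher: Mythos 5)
Your reduction of the lemma to the key claim $\psi_\omega\not\equiv\psi_0$ matches Step 1 of the paper, and your subsequent construction of $\mu$ (take $\mu\equiv 1$ if $\overline{\psi_\omega}\neq\overline{\psi_0}$, otherwise $\mu:=C+(\psi_\omega-\psi_0)$ with $C$ large) is correct and, if anything, slightly slicker than the paper's smoothing of a step function. The problem lies entirely in your attempted proof of the key claim, where the argument has two genuine gaps.

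First, the unique continuation heuristic does not apply. You derive that $W:=U_2-U_1$ solves a \emph{forced} linear parabolic equation with right-hand side $\omega(t)(U_2)_z$. Zero Cauchy data $W=W_z=0$ on $\{z=0\}$ for a homogeneous parabolic equation would indeed force $W\equiv 0$, but for a forced equation it does not: the nonzero source is precisely what allows $W\not\equiv 0$ (which you yourself exhibit via $W_{zz}(t,0)=-\omega\psi_\omega\not\equiv 0$). So there is no contradiction to extract from ``$W$ nontrivial versus $W\equiv 0$.'' Second, the proposed fallback identity $\int_0^T\omega(t)\Psi(t)\,dt=0$ for some strictly positive periodic weight $\Psi$ does \emph{not} contradict $\omega\not\equiv 0$: since $\omega\in\PP^0$ one already has $\int_0^T\omega\,dt=0$, so a single such orthogonality relation carries no information. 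To make this route work one would need the identity to hold for an entire family of weights rich enough to separate points, which is not what an energy integration produces. The linearization-plus-homotopy fallback is likewise only a sketch; injectivity of the Fr\'echet derivative at $\omega=0$ and the continuation step would each require substantial work.

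The paper proves the key claim by a quite different, elementary device. Writing $\Theta(t):=\int_0^t\omega(s)\,ds$, it sets $\hat v(t,z):=U(t,z-\Theta(t);\bar c+\omega)$. The change of variable absorbs the oscillatory part of the drift, so $\hat v$ solves the \emph{constant-drift} $\bar c$ equation but on the moving domain $\{z>\Theta(t)\}$ with Dirichlet boundary $\hat v(t,\Theta(t))=0$ and slope $\hat v_z(t,\Theta(t))=\psi_\omega(t)$. Setting $z_0:=\min_{[0,T]}\Theta$, the comparison principle on the fixed half-line $\{z>z_0\}$ gives $U(t,z-z_0;\bar c)\geqslant\hat v(t,z)$, with strict inequality propagating from any time at which $\Theta(t)>z_0$; the Hopf lemma then yields $U_z(t_2,0;\bar c)>\hat v_z(t_2,z_0)=\psi_\omega(t_2)$ at a first return time $t_2$ with $\Theta(t_2)=z_0$, contradicting $\psi_\omega\equiv\psi_0$. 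This avoids unique continuation entirely and uses only positivity of the comparison principle, which is robust under the sign-changing $\omega$. I would recommend replacing your Cauchy-data argument for the key claim by this moving-frame comparison.
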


\begin{proof}
Denote $\xi_1(t):= U_z(t,0;\bar{c})$ and $\xi_2(t):=U_z(t,0; \bar{c} +\omega )$, then
$A[\bar{c}] = \mu(t) \xi_1(t)$ and $A[\bar{c}+\omega] = \mu(t) \xi_2(t)$. Clearly, to prove
the lemma it suffices to show that $\ov{\mu \xi_1} \not= \ov{\mu \xi_2}$ for some $\mu$.
For clarity we divide the proof into three steps.

\smallskip

{\bf Step 1}. We claim that $\xi_1(t)\not \equiv \xi_2(t)$.

Suppose by way of contradiction that $\xi_1(t) \equiv \xi_2(t)$. Denote $\Theta(t): =\int_0^t \omega (s)ds$, then the
function $\hat{v}(t,z):= U(t,z-\Theta(t); \bar{c}+\omega )$ satisfies $\hat{v}_z (t,\Theta(t)) = \xi_2(t) \equiv \xi_1(t)$ and
$$
\left\{
 \begin{array}{ll}
 \hat{v}_t = \hat{v}_{zz} +\bar{c} \hat{v}_z + f(t,\hat{v}),
 & z> \Theta(t),\ t\in [0,T],\\
 \hat{v}(t, \Theta(t))=0, & t \in [0,T],\\
 \hat{v}(0,z)= \hat{v}(T,z), & z\geqslant \Theta(t),\ t\in [0,T].
 \end{array}
 \right.
$$
Set $z_0 := \min_{t\in [0,T]} \Theta(t)$. For large $\ell$ we compare $\hat{v}$ with the solution $v$ of
$$
\left\{
 \begin{array}{ll}
 v_t = v_{zz} + \bar{c} v_z + f(t,v), & z\in (z_0 , \ell + z_0),\ t>0,\\
 v(t,z_0)=0,\ v(t,\ell +z_0 ) = P^0, & t>0,\\
 v(0,z)=P^0 \cdot \chi_{[z_0,\ell+z_0]} (z), & z\in [z_0,\ell +z_0].
 \end{array}
 \right.
$$
It follows from the comparison principle that
$$
v(t+nT, z) \geqslant \hat{v}(t+nT, z) ,\quad \Theta(t) \leqslant  z\leqslant  \ell +z_0,\ t\in [0,2T] \mbox{ and any integer }
n\geqslant 0.
$$
Taking limit as $n\to \infty$ we have
$$
U_1 (t, z-z_0; \bar{c}, \ell) \geqslant \hat{v}(t,z), \quad  \Theta(t) \leqslant  z\leqslant  \ell +z_0,\ t\in [0,2T].
$$
Taking limit again as $\ell \to \infty$ we have
$$
U (t, z-z_0; \bar{c}) \geqslant \hat{v}(t,z),
\quad  z\geqslant \Theta(t),\ t\in [0,2T].
$$
Choose $t_1 \in [0,T)$ such that $\Theta(t_1) > z_0$. Then
the strong comparison principle yields that
$$
U (t, z-z_0; \bar{c}) > \hat{v}(t,z),
\quad  z\geqslant \Theta(t),\ t\in [t_1, t_2),
$$
where $t_2:= \max\{\tau \in (t_1,t_1 +T)  : \Theta (t)>z_0 \mbox{ for } t\in (t_1, \tau)\}$.
At $t=t_2$, by the comparison principle and the Hopf boundary lemma we have
$$
U_z(t_2, 0; \bar{c}) > \hat{v}_z (t_2, z_0)= \xi_2(t_2) = \xi_1(t_2) = U_z(t_2, 0;\bar{c}).
$$
This contradiction proves $\xi_1(t)\not \equiv \xi_2(t)$.

\smallskip

{\bf Step 2}. We shall show that when $\overline{\xi_1}= \overline{\xi_2}$, there exist some
$\mu\in \PP_+$ such that $\overline{\mu \xi_1} \not= \overline{\mu \xi_2}$.

In fact, it follows from Step 1 that there is a closed interval $J\in [0,T]$ such that $\xi_1(t)<\xi_2(t)$.
Hence $\xi_1(t)+\delta<\xi_2(t)$ for some small $\delta>0$. Define a function $\mu_0$ by
$$
\mu_0(t) :=
 \left\{
  \begin{array}{ll}
  2, &  t\in J,\\
  1, &  t\in  [0,T]\backslash J.
  \end{array}
  \right.
$$
Then
$$
\int_0^T \mu_0(t) [\xi_2(t) -\xi_1(t)] dt
= \int_J [\xi_2(t) -\xi_1(t)] dt + \int_0^T [\xi_2(t) -\xi_1(t)]dt > \delta |J|.
$$
Let $\mu$ be a function in $\PP_+$ obtained by smoothenning $\mu_0(t)$ slightly, then
$$
 \overline{\mu \xi_2 } -\overline{\mu \xi_1 } = \frac{1}{T} \int_0^T \mu(t) [\xi_2(t) -\xi_1(t)] dt >0.
$$
This completes the proof of Step 2.

\smallskip

{\bf Step 3}. Finally we consider the case where $\overline{\xi_1} \not= \overline{\xi_2}$.
In this case it is clear that $\overline{\mu \xi_1 } \not= \overline{\mu \xi_2} $ for any
constant $\mu$.

The proof of the lemma is complete.
\end{proof}

\subsection{Traveling waves with compact supports}
In this subsection we present some periodic traveling waves with compact supports, which
will be used to support the solution of \eqref{p} from below so that spreading happens.
This subsection can be regarded as a supplement to Subsection 3.2.

\begin{prop}\label{prop:tw compact large}
Assume that $\beta\in \PP$ and $\bar{\beta}\geqslant 0$.
\begin{itemize}
\item[(i)] For any small $\delta>0$, when $\ell>0$ is sufficiently large, the function
$u= W(t,x):= U_0(t, R_1(t)-x; \beta-r_1, \ell)$ (with $r_1 := \bar{\beta}-\bar{c}+\delta $
and $R_1 (t) := \int_0^t r_1(s) ds$) solves \eqref{p}$_1$ in $\R \times (R_1(t)-\ell, R_1(t))$.

\item[(ii)] Assume further that $0\leqslant \bar{\beta} < B(\tilde{\beta})$. Then
for any small $\epsilon >0$, when $\ell>0$ is large enough, the function
$u= W^\epsilon (t,x) := U_0(t, R^\epsilon (t)-x; \beta-r^\epsilon, \ell)$
(with $r^\epsilon := r(t;\beta)-\epsilon$ and $R^\epsilon (t) := \int_0^t r^\epsilon (s) ds$)
is a lower solution of \eqref{p} in the sense that, it solves \eqref{p}$_1$
in $\R \times (R^\epsilon (t)-\ell, R^\epsilon(t))$, and $r^\epsilon < -\mu(t)
W^\epsilon_x (t,R^\epsilon(t)) = A_0[\beta-r^\epsilon, \ell]$.

\item[(iii)] Assume that $0\leqslant \bar{\beta} < \bar{c}$. Then for any small $\epsilon >0$, when
$\ell>0$ is sufficiently large, the function $u= W^\epsilon_l (t,x) := U_0(t, x+L^\epsilon (t);
-\beta-l^\epsilon, \ell)$ (with $l^\epsilon := l(t;\beta)-\epsilon$ for
$l$ given in Proposition \ref{prop:tsw left} and $L^\epsilon (t) := \int_0^t l^\epsilon (s;\beta) ds$)
is a lower solution of \eqref{p} in the sense that, it solves \eqref{p}$_1$
in $\R \times (-L^\epsilon (t), \ell - L^\epsilon(t))$, and $l^\epsilon < \mu(t)
W^\epsilon_{lx} (t, -L^\epsilon(t)) = A_0[-\beta-l^\epsilon, \ell]$.
\end{itemize}
\end{prop}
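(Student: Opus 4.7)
The plan for all three items is: (a) realize the candidate function as a change of variables applied to the time-periodic Dirichlet solution $U_0(t,z;k,\ell)$ from Lemma~\ref{lem:positive sol DD0}; (b) verify $|\bar{k}|<\bar{c}$ in each case so that Lemma~\ref{lem:positive sol DD0} is available; and (c) for parts (ii) and (iii), deduce the strict boundary inequality from the monotonicity of $A[\cdot]$ in Lemma~\ref{lem:p tsw k half} combined with the uniform convergence $A_0[\cdot,\ell]\to A[\cdot]$ of Lemma~\ref{lem:A0 ell to A}. The change of variables $W(t,x)=U_0(t,R(t)-x;k,\ell)$ translates $v_t=v_{zz}+k(t)v_z+f(t,v)$ into $W_t=W_{xx}-\beta(t)W_x+f(t,W)$ precisely when $k(t)=\beta(t)-R'(t)$, so verifying \eqref{p}$_1$ on the appropriate strip reduces to a direct chain-rule computation in each case.

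For (i), I would take $k=\beta-r_1$ with the \emph{constant} $r_1=\bar{\beta}-\bar{c}+\delta$, so $\bar{k}=\bar{c}-\delta\in(0,\bar{c})$ and Lemma~\ref{lem:positive sol DD0} supplies $U_0(t,z;k,\ell)$ for every $\ell>\ell^*(k,a)$; the chain-rule computation then yields \eqref{p}$_1$ on $\R\times(R_1(t)-\ell,R_1(t))$ and no further inequality is required. For (ii) I set $k=\beta-r^\epsilon$. The upper bound $\bar{k}<\bar{c}$ comes from Lemma~\ref{lem:B(theta)}, since $\bar{\beta}<B(\tilde{\beta})$ forces $\bar{\beta}-\overline{r(t;\beta)}<\bar{c}$, while the lower bound $\bar{k}>-\bar{c}$ comes from Proposition~\ref{prop:tsw right}, where $\overline{r(t;\beta)}<\bar{\beta}+\bar{c}$. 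Thus $|\bar{k}|<\bar{c}$ for small $\epsilon$, $U_0$ exists for large $\ell$, and the PDE is inherited. Evaluating at $z=0$ identifies $-\mu(t)W^\epsilon_x(t,R^\epsilon(t))=A_0[\beta-r^\epsilon,\ell](t)$, so it remains to show $A_0[\beta-r^\epsilon,\ell]>r^\epsilon$ on $[0,T]$ for large $\ell$. Since $\beta-r^\epsilon\geqslant,\not\equiv\beta-r(t;\beta)$, the strict pointwise monotonicity in Lemma~\ref{lem:p tsw k half}(i) gives $A[\beta-r^\epsilon](t)>A[\beta-r(t;\beta)](t)=r(t;\beta)>r^\epsilon(t)$ for every $t\in[0,T]$; by continuity and $T$-periodicity the gap attains a positive minimum, and Lemma~\ref{lem:A0 ell to A} (applicable because $|\bar{k}|<\bar{c}$) transports the inequality to $A_0[\beta-r^\epsilon,\ell]$ for all sufficiently large $\ell$.

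Part (iii) is symmetric. Setting $k=-\beta-l^\epsilon$ and using Proposition~\ref{prop:tsw left} ($0<\overline{l(t;\beta)}<\bar{c}-\bar{\beta}$) together with $\bar{\beta}\geqslant 0$ gives $\bar{k}=-\bar{\beta}-\overline{l(t;\beta)}+\epsilon\in(-\bar{c},\bar{c})$ for small $\epsilon$. The chain rule with $W^\epsilon_l(t,x)=U_0(t,x+L^\epsilon(t);k,\ell)$ (now $W^\epsilon_{lx}=+(U_0)_z$) converts the equation for $U_0$ into \eqref{p}$_1$, and Lemmas~\ref{lem:p tsw k half}(i) and~\ref{lem:A0 ell to A} applied to $-\beta-l^\epsilon\geqslant,\not\equiv-\beta-l(t;\beta)$ give $A_0[-\beta-l^\epsilon,\ell](t)>l^\epsilon(t)$ uniformly in $t$ for $\ell$ large. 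The step I expect to need the most care is exactly this uniform-gap argument: one must invoke the Hopf-lemma based strict monotonicity in Lemma~\ref{lem:p tsw k half}(i) to turn the non-strict comparisons $\beta-r^\epsilon\geqslant,\not\equiv\beta-r$ (respectively $-\beta-l^\epsilon\geqslant,\not\equiv-\beta-l$) into pointwise strict inequalities for $U_z(t,0;\cdot)$, and then the $L^\infty([0,T])$ convergence of Lemma~\ref{lem:A0 ell to A} to make the strict inequality persist after the finite-$\ell$ truncation. Everything else is bookkeeping.
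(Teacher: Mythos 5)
Your proposal is correct and mirrors the paper's argument step by step: verify $|\bar{k}|<\bar{c}$ in each case so Lemma~\ref{lem:positive sol DD0} applies (using Lemma~\ref{lem:B(theta)} and Proposition~\ref{prop:tsw right} for (ii), Proposition~\ref{prop:tsw left} for (iii)), apply the chain rule to get \eqref{p}$_1$, and then combine the strict monotonicity of $A[\cdot]$ from Lemma~\ref{lem:p tsw k half} with the $L^\infty$-convergence in Lemma~\ref{lem:A0 ell to A} to obtain the boundary inequality for large $\ell$. The only cosmetic difference is that the paper obtains the uniform gap directly from $r^\epsilon=r-\epsilon<r=A[\beta-r]<A[\beta-r^\epsilon]$ (so the gap is at least $\epsilon$ without a compactness step), whereas you invoke continuity and $T$-periodicity, which is an equivalent and equally valid justification.
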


\begin{proof}
(i). For $r_1 := \bar{\beta}-\bar{c}+\delta$ we have $\bar{\beta}- \ov{r_1} = \bar{c} -\delta \in (-\bar{c}, \bar{c})$, provided $\delta>0$
is small. By Lemma \ref{lem:positive sol DD0}, the problem \eqref{p DD0}
with $k=\beta -r_1$ has a unique positive solution $U_0(t,z; \beta-r_1,
\ell)$ for each large $\ell>0$. A direct calculation shows that
$W(t,x):= U_0(t, R_1(t)-x; \beta-r_1, \ell)$ (with $R_1 (t) := \int_0^t r_1(s) ds$) solves \eqref{p}$_1$ in $\R\times (R_1(t)-\ell, R_1(t))$.
This (compactly supported) traveling wave travels rightward/leftward if $\ov{r_1} >0$/$\ov{r_1}<0$.

(ii). By Proposition \ref{prop:tsw right}, there exists $r(t;\beta)$ which satisfies $0<\bar{r}< \bar{\beta}+\bar{c}$ and $r=A[\beta -r]$.
On the other hand, by our assumption $0\leqslant \bar{\beta} <B(\tilde{\beta})$ and by Lemma \ref{lem:property of beta*}, we have $\bar{\beta}-\bar{c}<\bar{r}$. Therefore $| \bar{\beta}-\bar{r}| < \bar{c}$. Consequently, there exists $\epsilon_0 >0$
small such that for any $r^\epsilon := r -\epsilon $ (with $\epsilon \in (0,\epsilon_0)$) we have $| \bar{\beta}-\ov{r^\epsilon}| < \bar{c}$.
It follows from Lemma \ref{lem:positive sol DD0} that the problem
\eqref{p DD0} with $k=\beta -r^\epsilon$ has a unique positive solution $U_0(t,z; \beta-r^\epsilon, \ell)$ for each large $\ell>0$. A direct
calculation shows that $W^\epsilon(t,x):= U_0(t, R^\epsilon (t)-x; \beta-r^\epsilon, \ell)$ (with $R^\epsilon (t):=
\int_0^t r^\epsilon (s) ds$) solves \eqref{p}$_1$ in $\R \times (R^\epsilon (t)-\ell, R^\epsilon (t))$.

Moreover, $r^\epsilon = r-\epsilon  < r =A[\beta -r] < A[\beta -r^\epsilon]$ by
Lemma \ref{lem:p tsw k half}. This, together with Lemma
\ref{lem:A0 ell to A}, implies that $r^\epsilon <A_0 [\beta -r^\epsilon, \ell] =
-\mu(t) W^\epsilon_x (t, R^\epsilon (t))$ when $\ell$ is sufficiently large.
This means that $W^\epsilon$ is a lower solution of the problem \eqref{p}.

(iii). The proof is similar as (ii) by using Proposition \ref{prop:tsw left} instead of Proposition
\ref{prop:tsw right}.
\end{proof}

\section{Long time behavior of the solutions}\label{seclo}
In this section we study the influence of $\beta(t)$ on the asymptotic behavior of the solutions. In Subsection 4.1
we focus on the small advection case $0\leqslant\bar{\beta} <\bar{c}$ and prove Theorem \ref{thm:small beta}. In
Subsection 4.2 we study the boundedness of $g_\infty $ and $h_\infty$ when $\bar{\beta} \geqslant \bar{c}$.
Theorem \ref{thm:large beta} then follows easily. In Subsection 4.3 we deal with the medium-sized advection
case (i.e., $\bar{c}\leqslant\bar{\beta} <B(\tilde{\beta})$) and prove Theorem \ref{thm:middle beta}.

\subsection{Small advection case}\label{sac}
We start with the following equivalent conditions for vanishing.

\begin{lem}\label{lemvansmall}
Assume $0\leqslant \bar{\beta}<\bar{c}$.  Then the following three assertions are equivalent:
$$
{\rm (i)}\  h_\infty \mbox{ or } g_\infty \mbox{ is finite};\qquad
{\rm (ii)}\  h_\infty-g_\infty\leqslant  \ell^* (-\beta,a); \qquad  {\rm (iii)}\ \|u(t,\cdot)\|_{L^\infty ([g(t),h(t)])} \to 0
\mbox{ as } t\to \infty.
$$
\end{lem}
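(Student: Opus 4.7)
The plan is to establish the three-way equivalence by the chain $(ii) \Rightarrow (i)$ (trivial), $(i) \Rightarrow (ii)$ (the crux), and $(ii) \Leftrightarrow (iii)$ (via the principal-eigenvalue characterization of $\ell^*$). The implication $(ii) \Rightarrow (i)$ is immediate from the monotonicity $g'(t) < 0 < h'(t)$: if $h(t) - g(t) \leqslant \ell^* := \ell^*(-\beta, a)$ for all $t$, then both endpoints are trapped in a fixed bounded interval. Note that the hypothesis $0 \leqslant \bar{\beta} < \bar{c}$ gives $|\overline{-\beta}| < \bar{c}$, so Lemma \ref{lem:1eigenvalue} applies with $k = -\beta$ and $\ell^*$ is a well-defined positive number.

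For $(ii) \Rightarrow (iii)$, I would enlarge $[g_\infty, h_\infty]$ slightly to a fixed interval $[A, B]$ of length strictly less than $\ell^*$, on which the principal eigenvalue $\lambda_1 > 0$ with a positive $T$-periodic eigenfunction $\Phi$. Then $W(t,x) := C\Phi(t,x) e^{-\mu t}$ with $\mu \in (0, \lambda_1]$ is a super-solution of \eqref{p}$_1$ on $[A, B]$; using $f(t,u) \leqslant a(t)u$ from $(H_0)$ one checks $W_t - W_{xx} + \beta W_x - f(t,W) \geqslant (\lambda_1 - \mu)W \geqslant 0$. Taking $C$ large so that $u(t_0, \cdot) \leqslant W(t_0, \cdot)$ on $[g(t_0), h(t_0)]$ at some time $t_0$, comparison yields $u(t, x) \leqslant W(t, x) \to 0$ uniformly. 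The borderline case $h_\infty - g_\infty = \ell^*$ is handled by invoking the strict decrease of $f(t,u)/u$ in $u$ from $(H_0)$.

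For $(iii) \Rightarrow (ii)$, I argue by contradiction: if $h_\infty - g_\infty > \ell^*$, then by monotonicity there exist $T_0 > 0$ and a closed interval $J$ of length greater than $\ell^*$ with $J \subset (g(t), h(t))$ for all $t \geqslant T_0$. By Lemma \ref{lem:1eigenvalue}, the principal eigenvalue $\lambda_1 < 0$ on $J$, with positive $T$-periodic eigenfunction $\varphi$. Since $f(t,u)/u \to a(t)$ as $u \to 0^+$, the function $\varepsilon \varphi$ is a sub-solution of \eqref{p}$_1$ on $J$ for small $\varepsilon > 0$. Picking $t_0$ a large integer multiple of $T$ beyond $T_0$, and $\varepsilon$ small enough that $u(t_0, \cdot) \geqslant \varepsilon\varphi(t_0, \cdot)$ on $J$, comparison on the fixed domain $J$ gives $u(t, \cdot) \geqslant \varepsilon\varphi(t, \cdot)$ on $J$ for $t \geqslant t_0$, contradicting (iii).

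For $(i) \Rightarrow (ii)$, assume without loss of generality $h_\infty < \infty$. The first task is to show $g_\infty > -\infty$. Suppose otherwise; then for any $L > \ell^*$ the fixed interval $[h_\infty - L - 1, h_\infty - 1]$ lies inside $(g(t), h(t))$ for all sufficiently large $t$, and the sub-solution argument from the previous paragraph produces a $T$-periodic positive lower bound for $u$ there. Via a change of variables $y = (x - g(t))/(h(t) - g(t))$ and parabolic $W^{2,p}$ estimates together with Hopf's lemma at the right endpoint, this interior lower bound translates into a $T$-periodic positive lower bound for $-u_x(t, h(t))$, hence for $h'(t) = -\mu(t) u_x(t, h(t))$; but then $h(t) \to \infty$, contradicting $h_\infty < \infty$. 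So $g_\infty > -\infty$, and applying the same sub-solution contradiction to the now-fixed interval $[g_\infty, h_\infty]$ forces $h_\infty - g_\infty \leqslant \ell^*$. The main obstacle in the whole proof is this gradient-at-moving-boundary estimate, which is the essential tool converting interior information on $u$ into a lower bound for the free-boundary speed.
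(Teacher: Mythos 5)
Your handling of $(ii)\Leftrightarrow(iii)$ matches the paper: the $(iii)\Rightarrow(ii)$ contradiction, using a small multiple of the positive eigenfunction associated with a negative principal eigenvalue on an interval of length $>\ell^*$ as a persistent sub-solution, is precisely the paper's argument; and your $(ii)\Rightarrow(iii)$ super-solution $C\Phi e^{-\mu t}$ (valid when $\lambda_1>0$) together with the strict concavity of $f(t,u)/u$ to cover the borderline $h_\infty-g_\infty=\ell^*$ is, taken together, what the paper extracts in one stroke from \cite[Theorem~28.1]{P}.

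The genuinely different route is $(i)\Rightarrow(ii)$. The paper fixes $g_\infty>-\infty$ WLOG and, assuming $h_\infty-g_\infty>\ell^*$, freezes the right endpoint at $h(t_1)$ to form a one-sided auxiliary free boundary problem for a lower solution $v$ with left free boundary $\xi(t)\geqslant g(t)$; a $C^2$-convergence result in the spirit of \cite[Lemma~3.3]{DGP} shows $v$ approaches the periodic Dirichlet profile $U_0(t,\cdot;-\beta,\ell)$, whence $\liminf\xi'(t)<0$, so $\xi\to-\infty$, contradicting $\xi\geqslant g>g_\infty$. You instead fix $h_\infty<\infty$ WLOG and close the contradiction by deriving a uniform lower bound on $-u_x(t,h(t))$, hence on $h'(t)$, from the interior positivity supplied by the sub-solution. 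This is the classical mechanism whereby interior mass propels a Stefan boundary, and it is sound in spirit, but the specific tool you cite --- rescaling $[g(t),h(t)]$ to $[0,1]$ together with parabolic $W^{2,p}$ estimates --- degenerates when $g(t)\to-\infty$; what is actually needed is a local comparison confined to a fixed strip near $x=h_\infty$ (a parabola or a narrow truncated eigenfunction whose normal derivative pins $-u_x(t,h(t))$ uniformly for large $t$). That gradient bound is a genuine construction, roughly comparable in effort to the \cite{DGP} convergence lemma. The trade-off: the paper obtains the boundary-speed estimate for free from the $C^2$-convergence, at the price of invoking a heavier outside result; your route has to build the gradient bound by hand, but in exchange yields more direct local control of $h'(t)$ and avoids introducing the auxiliary one-sided free boundary problem.
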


\begin{proof}
``(i)$\Rightarrow$ (ii)". Without loss of generality we assume $g_\infty > -\infty$ and
prove (ii) by contradiction. Assume that $h_\infty-g_\infty>\ell^* (-\beta , a)$, then for sufficiently large
integer $n_1$ and $t_1 := n_1 T$, we have $h(t_1) - g(t_1) >  \ell^*$.

Now we consider an auxiliary problem:
\begin{equation}\label{subso}
\left\{
\begin{array}{ll}
v_t = v_{xx} - \beta(t) v_x +f(t, v), & t> t_1,\ x\in (\xi(t), h(t_1)),\\
v(t, \xi(t)) = 0,\quad \xi'(t)= -\mu(t) v_x(t, \xi(t)),& t>t_1,\\
v (t,h(t_1))=0, & t> t_1,\\
\xi(t_1) = g(t_1),\ \  v(t_1, x)= u(t_1, x), & x\in [g(t_1), h(t_1)].
  \end{array}
 \right.
 \end{equation}
Clearly, $v$ is a lower solution of \eqref{p}. So $\xi(t)\geqslant g(t)$ and $\xi(\infty)>-\infty$
by our assumption.
Using a similar argument as in \cite[Lemma 3.3]{DGP} by straightening the free boundary
one can show that
$$
\|v(t,\cdot)- V(t,\cdot)\|_{C^2([\xi(t), h(t_1)])} \to 0,\quad \mbox{as } t\to\infty,
$$
where $V(t,x):= U_0(t,x-\xi(\infty);-\beta, \ell)$
and $U_0(t,z;-\beta,\ell)$ is the periodic solution of \eqref{p DD0} with $k=-\beta$ and
$\ell := h(t_1) - \xi(\infty)> h(t_1) -\xi(t_1) >\ell^*$. Therefore,
\[
\liminf_{t\to\infty} \xi'(t) = \liminf_{t\to\infty} [ -\mu (t) v_x(t,\xi(t))] =
\liminf_{t\to\infty} [-\mu(t) V_x (t,\xi(\infty)) ] = -\delta
\]
for some $\delta>0$. This contradicts the assumption $\xi(\infty) >-\infty$.

\smallskip
``(ii)$\Rightarrow$(i)". When (ii) holds, (i) is obvious.

\smallskip

``(ii)$\Rightarrow$(iii)".  By the assumption and \cite[Theorem 28.1]{P} we see that the unique positive
solution of the following problem
\begin{equation}\label{upbsoper}
\left\{
\begin{array}{ll}
v_t=v_{xx}-\beta(t)v_x+f(t,v), & t>0,\ x\in[g_\infty,h_\infty],\\
 v(t,g_\infty)= v(t,h_\infty)=0, &  t>0,\\
 v(0,x)=v_0(x)\geqslant 0, & x\in[g_\infty,h_\infty],
  \end{array}
 \right.
 \end{equation}
with $v_0(x)\geqslant u_0(x)$ for $x\in[-h_0,h_0]$, satisfies $v\to0$ uniformly for $x\in[g_\infty,h_\infty]$ as $t\to\infty$.
Then the conclusion (iii) follows easily from the comparison principle.

\smallskip

``(iii)$\Rightarrow$(ii)": We proceed by a contraction argument. Assume that, for
some small $\varepsilon>0$ there exists a large integer $n_2$ such that
$h(t)-g(t)>\ell^*+ 3\varepsilon$ for all $t>t_2 := n_2 T$.
It is known that eigenvalue problem \eqref{eigen-p} with $\ell = \ell^*+\varepsilon$
and  $k(t)=-\beta(t)$, admits a negative principal eigenvalue, denoted by $\lambda_\varepsilon$, whose corresponding
positive eigenfunction, denoted by $\varphi_\varepsilon$, can be chosen positive and normalized by $\|\varphi_\varepsilon\|_{L^{\infty}}=1$. Set
\[
w(t,x) :=\delta\varphi_\varepsilon(t,x)\ \mbox{ for } (t,x)\in[0,T]\times[0,\ell^*+\varepsilon],
\]
with $\delta>0$ small such that
\[
f(t,\delta\varphi_\varepsilon)\geqslant a(t)\delta\varphi_\varepsilon+
\frac{1}{2}\lambda_\varepsilon\delta\varphi_\varepsilon\ \mbox{in } [0,T]\times[0, \ell^*+\varepsilon].
\]
A simple calculation yields that for $(t,x)\in[0,\infty)\times[0, \ell^*+\varepsilon]$,
$$
w_t-w_{xx}+\beta(t)w_x-f(t,w)  = \delta\varphi_\varepsilon [a(t)+\lambda_\varepsilon ]
-f(t,\delta\varphi_\varepsilon) \leqslant \frac{1}{2}\lambda_\varepsilon\delta\varphi_\varepsilon \leqslant 0.
$$
Furthermore, one can see that
\[
0\leqslant  w(0,x) = \delta \varphi_\varepsilon (0,x) <  u(t_2, x +g(t_2)+\varepsilon),\quad
 x\in [0, \ell^*+\varepsilon],
\]
when $\delta$ is sufficiently small, since the last function is positive on $[0,\ell^* +\varepsilon]$.
By comparison we have
$$
u(t+t_2,x +g(t_2) +\varepsilon) \geqslant w(t,x),\quad (t,x)\in[0,\infty)\times[0, \ell^*+\varepsilon],
$$
contradicting (iii).

This proves the lemma.
\end{proof}

Next, we give a sufficient condition for vanishing, which indicates that if both of
the initial domain and the initial function are small, then the species dies out eventually in
the environment with small advection.

\begin{lem}\label{vfsma}
Assume $0\leqslant \bar{\beta}<\bar{c}$ and let $(u,g,h)$ be a
solution of \eqref{p}. If $h_0<\ell^*(-\beta, a)/2$ and if $\|u_0\|_{L^\infty([-h_0,h_0])}$ is sufficiently small, then
vanishing happens.
\end{lem}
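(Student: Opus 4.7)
The plan is to confine the free boundaries $g(t),h(t)$ to an interval strictly shorter than $\ell^*(-\beta,a)$ for all $t\geqslant 0$, after which Lemma~\ref{lemvansmall} immediately delivers vanishing. Pick $h_1\in\big(h_0,\,\ell^*(-\beta,a)/2\big)$; by Lemma~\ref{lem:1eigenvalue} the principal eigenvalue $\lambda_1$ of \eqref{eigen-p} with $k=-\beta$ on $(-h_1,h_1)$ is strictly positive, with associated $T$-periodic principal eigenfunction $\Phi(t,x)>0$ vanishing at $x=\pm h_1$.

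Fix $\gamma\in(0,\lambda_1)$ and set
\[
\bar u(t,x):= M\,e^{-\gamma t}\,\Phi(t,x)\qquad\text{on }[0,\infty)\times[-h_1,h_1],
\]
with $M:=\|u_0\|_{L^\infty}\big/\min\{\Phi(t,x):t\in[0,T],\,|x|\leqslant h_0\}$. The eigenvalue identity $\Phi_t-\Phi_{xx}+\beta\Phi_x-a\Phi=\lambda_1\Phi$ together with the KPP bound $f(t,u)\leqslant a(t)u$ for $u\geqslant 0$ yields
\[
\bar u_t-\bar u_{xx}+\beta(t)\bar u_x-f(t,\bar u)\geqslant (\lambda_1-\gamma)\bar u\geqslant 0,
\]
and the choice of $M$ ensures $\bar u(0,x)\geqslant u_0(x)$ on $[-h_0,h_0]$. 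Let
\[
T^*:=\sup\{\,T>0:\,-h_1<g(t)\leqslant h(t)<h_1\text{ for all }t\in[0,T]\,\}>0.
\]
For $t\in[0,T^*]$ the domain $[g(t),h(t)]$ is contained in $(-h_1,h_1)$, and at the lateral boundary $\bar u(t,g(t))>0=u(t,g(t))$ and $\bar u(t,h(t))>0=u(t,h(t))$; the parabolic maximum principle therefore gives $u(t,x)\leqslant \bar u(t,x)$ on $[g(t),h(t)]$, and in particular $\|u(t,\cdot)\|_{L^\infty}\leqslant M\|\Phi\|_{L^\infty}e^{-\gamma t}$.

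I next invoke the a~priori boundary-gradient bound $|h'(t)|+|g'(t)|\leqslant K\,\|u(t,\cdot)\|_{L^\infty([g(t),h(t)])}$, a standard consequence of straightening the free boundary and applying interior-plus-boundary Schauder estimates to the resulting linear parabolic equation on a fixed interval (cf.~\cite{DuLin,DuLou}); the constant $K$ depends on $h_1$ and the H\"older norms of $\mu,\beta,a$ but not on $M$. Combining this with the exponential decay above gives $|h'(t)|\leqslant KM\|\Phi\|_{L^\infty}e^{-\gamma t}$, hence
\[
h(t)\leqslant h_0+\frac{KM\|\Phi\|_{L^\infty}}{\gamma},\qquad g(t)\geqslant -h_0-\frac{KM\|\Phi\|_{L^\infty}}{\gamma},\qquad t\in[0,T^*].
\]
Since $M$ is proportional to $\|u_0\|_{L^\infty}$, taking $\|u_0\|_{L^\infty}$ sufficiently small forces the right-hand side above to be strictly less than $h_1$, which is incompatible with $T^*<\infty$. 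Thus $T^*=\infty$, so $h_\infty-g_\infty<2h_1<\ell^*(-\beta,a)$, and Lemma~\ref{lemvansmall} yields vanishing. The main obstacle is the boundary-gradient estimate with linear dependence on $\|u\|_{L^\infty}$ and $K$ independent of $M$: this is well known but requires verifying that the straightened problem on the fixed domain falls under a parabolic Schauder estimate up to the boundary, with Schauder constant controlled by $h_0$, $h_1$ and the coefficients' H\"older norms; the regularity assumption in $\HH$ is exactly what is needed to secure this.
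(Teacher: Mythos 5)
Your first half is correct and essentially coincides with the paper's: both use the principal eigenfunction of \eqref{eigen-p} on a slightly larger interval to manufacture an exponentially decaying super-solution $\bar u$ (the paper's $w=\varepsilon_0\varepsilon_1 e^{-\delta t}\varphi_1$) that dominates $u$ as long as $[g(t),h(t)]\subset(-h_1,h_1)$, giving $\|u(t,\cdot)\|_{L^\infty}\leqslant M\|\Phi\|_{L^\infty}e^{-\gamma t}$. The two arguments diverge in how they control the free boundaries: you want to push the $L^\infty$ decay through the Stefan condition via a boundary-gradient estimate, whereas the paper constructs a \emph{second} upper solution $(v,-\rho,\xi)$ whose moving endpoint $\xi(t)=h_0\bigl(1+\delta-\tfrac{\delta}{2}e^{-\delta t}\bigr)$ is prescribed so that its total displacement is $\delta h_0/2<h_1-h_0$ and so that the free-boundary super-solution inequality $\xi'(t)\geqslant -\mu(t)v_x(t,\xi(t))$ holds by design (via the choice of $\varepsilon_1$). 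That comparison traps $h(t)\leqslant\xi(t)<h_1$ directly and never needs any gradient bound on $u$.

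The gap in your proposal is exactly the step you flag as the main obstacle and then take for granted: the claim $|h'(t)|+|g'(t)|\leqslant K\|u(t,\cdot)\|_{L^\infty}$ with $K$ independent of $M$. After straightening, the drift of the transformed equation contains $g'(t)$ and $h'(t)$, so the Schauder or $L^p$ constant in the resulting gradient bound depends on a priori bounds for $g',h'$ --- which is precisely what you are trying to estimate. Breaking the circularity requires a preliminary $L^\infty$ bound on $g',h'$; the Du--Lin barrier gives $|h'(t)|\leqslant C_2$, but $C_2$ depends on $\|u_0\|_{C^1}$ (the barrier must dominate $u_0$ near $x=\pm h_0$ at $t=0$) and not merely on $\|u_0\|_{L^\infty}$, and for $t$ near $0$ there is no interior-in-time smoothing, so $u_x(0^+,\pm h_0)=u_0'(\pm h_0)$ is genuinely governed by $u_0'$. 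Consequently $K$ is not uniform over general $u_0\in\mathscr X(h_0)$ with small $\|u_0\|_{L^\infty}$, and your argument does not establish a pure $L^\infty$-smallness threshold as the lemma and the paper's proof do. If one restricts to $u_0=\sigma\phi$ with $\phi$ fixed, the ratio $\|u_0\|_{C^1}/\|u_0\|_{L^\infty}$ is fixed and a bootstrap (Du--Lin barrier for the crude bound, then parabolic $L^p$/Schauder on $[t-1,t]$ for $t\geqslant 1$, plus an integration on $[0,1]$) would indeed make the constants uniform as $\sigma\to 0$, but that chain of estimates has to be carried out; it is not a one-line consequence of Schauder theory on a fixed domain, and the references you cite do not state it in the form you need.
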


\begin{proof}
We use a similar idea as in \cite{DGP}, but our approach is more complicated
since we are not clear about the symmetry and the monotonicity for the principal eigenfunction.

\smallskip

For any given $h_1\in(h_0,\ell^*(-\beta,a)/2)$, we consider the problem \eqref{eigen-p} with $\ell =2h_1$
and $k(t) = -\beta(t)$. Denote by $\lambda_1$ and $\varphi_1$ with $\|\varphi_1\|_{L^\infty}=1$
the principal eigenvalue and the corresponding positive eigenfunction of this problem.
Then $\lambda_1>0$ by Lemma \ref{lem:1eigenvalue}. We use $\zeta_1(t)$ and $\zeta_2(t)$ to denote the
leftmost and  the rightmost local maximum point of $\varphi_1(t,\cdot)$, respectively.
Set $\eta_1:=\min_{t\in [0,T]}\zeta_1(t)$, $\eta_2:=\max_{t\in [0,T]}\zeta_2(t)$,
\[
 \varepsilon_0 :=\min\Big\{\min_{t\in [0,T]}\varphi_1(t,\eta_1),\ \min_{t\in [0,T]}\varphi_1(t,\eta_2)\Big\}, \
\delta: =\min\Big\{\lambda_1 ,\ \frac{h_1}{h_0}-1 \Big\},
\]
then $\varepsilon_0<1$, and there exists $\varepsilon_1=\varepsilon_1(\delta)>0$
small such that
\[
2\varepsilon_1 \cdot \max \left\{ \max_{t\in [0,T]} |\mu(t) \varphi_{1x}(t,2h_1)|,\ \max_{t\in [0,T]}
[\mu(t) \varphi_{1x}(t,0)] \right\} <\delta^2h_0.
\]
Define
\[
w(t,x) :=\varepsilon_0\varepsilon_1e^{-\delta t}\varphi_1(t,x+h_1)
\ \mbox{ for } (t,x)\in[0,\infty)\times[-h_1,h_1].
\]
A direct calculation shows that for $(t,x)\in[0,\infty)\times[-h_1,h_1]$,
\[
w_t-w_{xx}+\beta(t)w_x-f(t,w)\geqslant
\varepsilon_0\varepsilon_1e^{-\delta t}(\lambda_1 - \delta)\varphi_1(t,x+h_1) \geqslant0.
\]
If we choose $u_0$ satisfying
\[
u_0(x) \leqslant \varepsilon_0 \varepsilon_1 \varphi_1(0,x+h_1) =  w(0,x) \mbox{ for } x\in [-h_0,h_0],
\]
then the comparison principle yields
\begin{equation}\label{uw10}
u(t,x)\leqslant w(t,x)\ \mbox{ for }(t,x)\in[0,\tau )\times[g(t),h(t)],
\end{equation}
where $\tau:=\sup\{t>0  : h(t)<h_1 \mbox{ and }  g(t)>-h_1\}$.
We will prove that $\tau =\infty$. Once this is proved
we have $[g(t), h(t)]\subset [-h_1, h_1]$ for all $t\geqslant0$, and
so the vanishing conclusion follows from the previous lemma.

To prove $\tau =\infty$, we employ an indirect argument by assuming that $\tau<\infty$.
Without loss of generality we may assume that $h(\tau)=h_1$.
Define
\[
\xi(t) :=h_0\Big(1+\delta-\frac{\delta}{2}e^{-\delta t}\Big),\
v(t,x) :=\varepsilon_1e^{-\delta t}\varphi_1(t,x-\xi(t)+2h_1)
\]
for $t\geqslant 0,\ x\in J_r(t):= [\eta_2+\xi(t)-2h_1, \ \xi(t)]$.
A direct calculation shows that
\begin{eqnarray*}
v_t-v_{xx}+\beta(t)v_x-f(t,v)& \geqslant & \varepsilon_1e^{-\delta t}[(\lambda_1- \delta)\varphi_1 (t,x-\xi(t)+2h_1)
- \xi'\varphi_{1x} (t,x-\xi(t)+2h_1)] \\ & \geqslant &  0, \quad t>0,\ x\in J_r(t),
\end{eqnarray*}
since $\xi'>0$ and $\varphi_{1x}(t,x-\xi(t)+2h_1)<0$ for $t\geqslant 0$ and
$x\in J_r(t)$. On the other hand, by the choice of $\varepsilon_1$
we have
$$
\xi'(t)=\frac{\delta^2 h_0}{2} e^{-\delta t} \geqslant
-\mu(t)\varepsilon_1 e^{-\delta t} \varphi_{1x}(t,2h_1)
=- \mu(t) v_x(t, \xi(t)).
$$
We claim that $h(t)\leqslant \xi(t)$ for all $t\in [0,\tau]$.  When $h(t)\leqslant \eta_2+ \xi(t)-2h_1$
the claim is true since $\eta_2+ \xi(t)-2h_1 <\xi(t)$.
Assume that the set $\{ 0\leqslant t\leqslant \tau  : h(t)>\eta_2+ \xi(t)-2h_1\}\not= \emptyset$ consists of
some intervals and $[\tau_1, \tau_2]$ is one of them. Then $h(\tau_1 )=\eta_2+ \xi(\tau_1)-2h_1$, and
on the left boundary $x=\eta_2+ \xi(t)-2h_1$ of the domain $\Omega := \{(t,x)  : \eta_2+ \xi(t)-2h_1 \leqslant x \leqslant h(t),\
t\in [\tau_1, \tau_2]\}$ we have
\begin{eqnarray*}
& & u(t, \eta_2+ \xi(t)-2h_1)\leqslant  w(t, \eta_2+ \xi(t)-2h_1)  =  \varepsilon_0 \varepsilon_1
e^{-\delta t} \varphi_1(t, \eta_2+ \xi(t)-h_1) \\
 & \leqslant &  \varepsilon_0 \varepsilon_1 e^{-\delta t} \leqslant \varepsilon_1 e^{-\delta t} \varphi_1(t, \eta_2)
  \equiv  v(t, \eta_2+ \xi(t)-2h_1),\quad t\in [\tau_1, \tau_2].
\end{eqnarray*}
Hence $v$ is an upper solution in $\Omega$ and by comparison we have $u\leqslant v$ in $\Omega$
and $h(t)< \xi(t)$ for $t\in [\tau_1, \tau_2]$.
(Note that in case $\tau_1 =0$ we need an additional condition: $u_0(x)\leqslant v(0,x)$ for
$\eta_2 +\xi(0)-2h_1 \leqslant x \leqslant h_0$. This is true if we choose $u_0$ sufficiently small.)
In summary, we proved the claim and so $h(\tau)\leqslant \xi (\tau)< \xi (\infty)\leqslant h_1$,
contradicting our assumption $h(\tau)=h_1$. This proves $\tau=\infty$, and the proof of the lemma is complete.
\end{proof}

We now present a sufficient condition for spreading.

\begin{lem}\label{lemuto1}
Assume $0\leqslant \bar{\beta}<\bar{c}$. If $h_0\geqslant \ell^*(-\beta, a)/2$, then spreading happens, that is,
 $-g_\infty=h_\infty=\infty$, and
\begin{equation}\label{utoPt}
\lim_{t\to\infty}[u(t,\cdot)-P(t)]=0 \ \mbox{ locally uniformly in } \R,
\end{equation}
where $P$ is the unique positive $T$-periodic solution of $u_t = f(t,u)$.
\end{lem}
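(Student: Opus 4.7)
The plan is to establish the two assertions separately: first that both free boundaries go to infinity, and then that $u(t,\cdot)$ approaches the periodic state $P(t)$ on compacta. The spreading of the free boundaries will come directly from Lemma \ref{lemvansmall}, and the convergence to $P(t)$ will be obtained by squeezing $u$ between two functions that both approach $P(t)$.

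For the first step I observe that, since $g'(t)<0<h'(t)$ for all $t>0$, the hypothesis $h_0\geqslant \ell^*(-\beta,a)/2$ gives $h(t)-g(t)>2h_0\geqslant \ell^*(-\beta,a)$ for any $t>0$. Hence assertion (ii) of Lemma \ref{lemvansmall} fails, so assertion (i) must fail too, which forces $h_\infty=-g_\infty=\infty$.

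For the lower bound I would fix $L>0$ and $\varepsilon>0$, and use the fact that $|\overline{-\beta}|=\bar{\beta}<\bar{c}$ to invoke Lemma \ref{lem:positive sol DD0} with $k=-\beta$: for $\ell$ large, the $T$-periodic solution $U_0(t,z;-\beta,2\ell)$ of \eqref{p DD0} on $[0,2\ell]$ exists, and by the locally uniform convergence $U_0(t,z+\ell;-\beta,2\ell)\to P(t)$ I can choose $\ell$ so large that
\[
U_0(t,z+\ell;-\beta,2\ell)>P(t)-\varepsilon\quad\text{for all }t\in[0,T],\ z\in[-L,L].
\]
Since $g_\infty=-\infty$ and $h_\infty=\infty$, there is $t_\ell>0$ with $g(t_\ell)<-\ell$ and $h(t_\ell)>\ell$. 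On the fixed cylinder $[t_\ell,\infty)\times[-\ell,\ell]$ consider the auxiliary problem with the same PDE, Dirichlet data $0$ at $x=\pm\ell$, and initial data $v_0(x)$ chosen continuous with $v_0(\pm\ell)=0$ and $0\leqslant v_0(x)\leqslant u(t_\ell,x)$ (for instance $v_0(x)=\min\{u(t_\ell,x),\eta(\ell^2-x^2)\}$ for small $\eta>0$, which is positive and below $u(t_\ell,\cdot)$ since $u(t_\ell,\cdot)>0$ on $(-\ell,\ell)\subset(g(t_\ell),h(t_\ell))$). The solution $v(t,x)$ is a lower solution for $u$ on $[t_\ell,\infty)\times[-\ell,\ell]$, because $\pm\ell\in(g(t),h(t))$ for $t\geqslant t_\ell$ and hence $v=0<u$ there. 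A zero-number/attractivity argument in the spirit of Lemma \ref{lem:positive sol DD1} (applied to the analogous non-degenerate IBVP, using that the only $T$-periodic solution is $U_0$) gives $\|v(t+nT,\cdot)-U_0(t,\cdot+\ell;-\beta,2\ell)\|_{L^\infty([-\ell,\ell])}\to 0$ as $n\to\infty$. Combining,
\[
\liminf_{t\to\infty}\bigl[u(t,x)-P(t)\bigr]\geqslant -\varepsilon\quad\text{uniformly for }x\in[-L,L],
\]
and letting $\varepsilon\to 0$ gives the lower half of \eqref{utoPt}.

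For the upper bound I would compare $u$ with the spatially constant solution $\bar{U}(t)$ of the ODE $\bar{U}'=f(t,\bar{U})$ with $\bar{U}(0)=M:=\max\{\|u_0\|_\infty,P^0\}$. Then $\bar{U}$ is a super-solution of \eqref{p} (its Dirichlet values at $g(t),h(t)$ are positive, and the Stefan condition trivially holds since $\bar{U}_x\equiv 0$), so $u(t,x)\leqslant \bar{U}(t)$. The hypotheses on $f$ in $(H_0)$ (in particular, $f(t,u)/u$ strictly decreasing and $P$ being the unique positive $T$-periodic solution) imply $\bar{U}(t)-P(t)\to 0$ uniformly as $t\to\infty$, giving $\limsup_{t\to\infty}[u(t,x)-P(t)]\leqslant 0$ uniformly. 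Together with the lower estimate this yields \eqref{utoPt}. The main technical obstacle is the attractivity step in the lower-bound argument: one must make sure the $T$-periodic Dirichlet problem on $[-\ell,\ell]$ is globally attracting from positive initial data, which requires a zero-number argument as in \cite{BPS,A} because the coefficient $-\beta(t)$ is merely $C^{\nu/2}$ in $t$; the workaround via the change of variables $y=x+\int_0^t\beta(s)\,ds$ (alluded to in the footnote to Lemma \ref{lem:positive sol DD1}) is precisely what I would invoke.
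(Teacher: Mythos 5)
Your first step and lower bound are essentially the paper's argument. The paper also observes $h(t)-g(t)>\ell^*$ for $t>0$ and invokes Lemma \ref{lemvansmall}; for the lower bound it defers to ``a similar argument as in the proof of \cite[Lemma 3.4]{DGP},'' which is exactly the bounded-domain comparison you spell out: pick $\ell$ large so that $U_0(t,\cdot+\ell;-\beta,2\ell)$ approximates $P(t)$ on $[-L,L]$, wait until $[-\ell,\ell]\subset(g(t),h(t))$, compare with the Dirichlet IBVP on the frozen interval, and use attractivity of the unique positive periodic solution (which needs the instability of $0$, guaranteed here by $2\ell>\ell^*$ and Lemma \ref{lem:1eigenvalue} — you leave this slightly implicit but it is the correct mechanism). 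Making this explicit is a reasonable substitute for the citation.

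Where you genuinely diverge is the upper bound. The paper compares $u$ with the solution $v$ of the Cauchy problem \eqref{ee4} on all of $\R$ with compactly supported initial data, and then invokes \cite[Theorem 1.6]{Nad} to conclude $v(t,\cdot)-P(t)\to 0$ locally uniformly. You instead compare $u$ with the spatially constant ODE trajectory $\bar{U}(t)$ with $\bar{U}(0)=M\geqslant\max\{\|u_0\|_\infty,P^0\}$: since $\bar{U}_x\equiv 0$, $\bar{U}$ satisfies the PDE exactly and dominates $u$ on the lateral boundary $x=g(t),h(t)$, so $u\leqslant\bar{U}$ on $\{g(t)\leqslant x\leqslant h(t)\}$, and the Fisher--KPP structure of $(H_0)$ (uniqueness of $P$ and strict monotonicity of $f(t,u)/u$, e.g.\ via the decreasing ratio $\bar{U}(t)/P(t)$) gives $\bar{U}(t)-P(t)\to 0$. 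This is a valid and in fact lighter route: it delivers a \emph{uniform} (not merely locally uniform) upper bound and bypasses the external Cauchy-problem spreading result entirely, at the cost of nothing — the argument is self-contained given $(H_0)$. The two routes yield the same conclusion; yours removes one dependency and is arguably cleaner for this particular lemma.
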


\begin{proof}
Since $g'(t)<0<h'(t)$ for $t>0$, we have $h(t)-g(t)>\ell^*$ for any $t>0$.
So the conclusion $-g_\infty = h_\infty =\infty$ follows from Lemma \ref{lemvansmall}.
In what follows we prove \eqref{utoPt}.

First, using a similar argument as in the proof of \cite[Lemma 3.4]{DGP}
one can show that, for positive integer $m$,
\begin{equation}\label{zzt}
\liminf_{m\to\infty} u(t + mT,x) \geqslant P(t)\
\mbox{ locally uniformly for } (t,x)\in[0,T]\times\R.
\end{equation}
On the other hand, let $v$ be the solution of the following Cauchy problem
\begin{equation}\label{ee4}
\left\{
\begin{array}{ll}
 v_t =v_{xx}-\beta(t)v_x+f(t,v), &
 x\in\R,\ t\in(0,\infty),\\
 v(0,x)=  v_0(x), &  x\in\R,
\end{array}
\right.
\end{equation}
where $v_0 (x)= u_0(x)$ for $x\in [-h_0, h_0]$, and $v_0(x)=0$ for $|x|>h_0$. By comparison we have
\begin{equation}\label{supu1}
u(t,x)\leqslant  v(t,x) \ \mbox{ for all } (t,x)\in[0,\infty)\times[g(t),h(t)].
\end{equation}
Since $0\leqslant \bar{\beta}<\bar{c}$, it follows from \cite[Theorem 1.6]{Nad} that
\[
\lim_{t\to\infty}[v(t,x)-P(t)]=0  \ \mbox{ locally uniformly for } x\in\R.
\]
This, together with \eqref{supu1}, yields that
$$
\limsup_{m\to\infty} u(t + mT,x) \leqslant  P(t)\ \mbox{ locally uniformly for } (t,x)\in[0,T]\times\R.
$$
Combining with \eqref{zzt} we have
$$
\lim_{m\to\infty} u(t + mT,x) =  P(t)\ \mbox{ locally uniformly for } (t,x)\in[0,T]\times\R.
$$
Finally, using the periodicity of $P(t)$ one can easily obtain \eqref{utoPt}.
\end{proof}

\begin{remark}\label{rem12}\rm
Consider the equation $u_t = u_{xx}+k(t)u_x +f(t,u)$ between two variable boundaries $\hat{g}(t)$
and $\hat{h}(t)$. If $\hat{g}(t)\to -\infty$ and $\hat{h}(t)\to \infty$ as $t\to \infty$, then
a similar argument as above shows that $u(t,\cdot)-P(t)\to 0$ as $t\to \infty$, provided
$k\in \PP$ satisfies $|\bar{k}| <\bar{c}$.
\end{remark}

\medskip

\noindent
{\it Proof of Theorem \ref{thm:small beta}.}
Based on the previous lemmas, one can prove Theorem \ref{thm:small beta} easily in a similar way as
proving \cite[Theorem 5.2]{DuLou}.  \qed

\subsection{Boundedness of $g_\infty$ and $h_\infty$}\label{bbbgh}
In this subsection we prove that $g_\infty>-\infty$ when $\bar{\beta}\geqslant \bar{c}$,
and $h_\infty<\infty$ when $\bar{\beta}\geqslant  B(\tilde{\beta})$, which will be used in
the proof of Theorem \ref{thm:large beta}.

First we give some estimates for the solution $u$ of \eqref{p}. Denote
\begin{equation}\label{def:M}
M:=1+\|u_0\|_{L^{\infty}([-h_0,h_0])},
\end{equation}
then it follows from the comparison principle that $u(t,x)\leqslant  M$ for $(t,x)\in[0,\infty)\times[g(t),h(t)]$.
Define a function $f_M(t,u) \in C^{\nu/2, 1+\nu/2}([0,T]\times \R)$ such that
\[
f_M(t,u)\left\{\begin{array}{ll}= a(t)u & \mbox{ for }\ (t,u)\in[0,T]\times[0,1],\\
>0 & \mbox{ for }\ (t,u)\in[0,T]\times(1,M),\\
<0 & \mbox{ for }\ (t,u)\in[0,T]\times(M,\infty),
\end{array}
\right.
\]
and that $f_M(t,u)$ is a Fisher-KPP type of nonlinearity, $T$-periodic in $t$,
\[
(f_M)_u(t,M) <0 \mbox{ and } f(t,u)\leqslant  f_M(t,u)\leqslant  a(t)u \ \mbox{for } (t,u)\in[0,T]\times[0,\infty).
\]
It is known that the equation $u_t=u_{xx}+f_M(t,u)$ has many periodic traveling waves. Denote
the traveling wave with minimal average speed $\bar{c}$ by $Q_M (t,x+\bar{c} t)$,
then $Q_M(t,-\infty) =0$, $Q_M(t,\infty) = M$, $(Q_M)_z (t,z)>0$, $Q_M(t,z)$ is $T$-periodic in $t$ and,
without loss of generality, we may assume that it satisfies the normalization condition: $Q_M (0,0) = M/2$.
Using the principal eigenvalue of time-periodic parabolic operators as in \cite[Subsection 1.3]{H1}
or \cite[Subsection 1.4]{HR}, we see that, for some positive constant $C$,
\begin{equation}\label{qq1}
Q_M(t,z)\sim -Cze^{\frac{\bar{c}}{2}z}  \ \mbox{ as } z\to-\infty,\quad \mbox{uniformly in } t\in [0,T].
\end{equation}
Clearly, $Q_M \big(t, x +\bar{c} t - \int_0^t \beta(s) ds \big)$ is a periodic traveling wave of
the equation $u_t=u_{xx}-\beta(t)u_x+f_M(t,u)$. By the comparison principle we see that the solution $u$ of \eqref{p} satisfies
\begin{equation}\label{qqul}
u(t,x)\leqslant  Q_M \Big(t, x +\bar{c} t - \int_0^t \beta(s) ds + x_0\Big) \ \ \mbox{ for } t>0,\ x\in[g(t),h(t)],
\end{equation}
where $x_0>0$ is a large real number depending on $u_0$.

Now we give another estimate for $u$. Set $z:=\int_0^t \beta(s)ds-x$ and let $v$ be the solution of the Cauchy problem
\[
\left\{
\begin{array}{l}
v_t=v_{zz}+\frac{1}{M}f_M(t,Mv),\quad  t>0, \ z\in \R,\\
v(0,z) =\left\{\begin{array}{ll}
\frac{1}{M} u_0(-z), & |z|\leqslant h_0,\\
0, & |z|>h_0.
\end{array}
\right.
\end{array}
\right.
\]
It then follows from the comparison principle that
\begin{equation}\label{usma11}
u(t,x)\leqslant Mv\Big(t, \int_0^t \beta(s)ds-x\Big)\ \ \mbox{for }
(t,x)\in(0,\infty)\times[g(t),h(t)].
\end{equation}
On the other hand, it follows from \cite[Proposition 2.3]{HNRR}\footnote{Note that \cite[Proposition 2.3]{HNRR} holds
for the equation $u_t =u_{xx} + au$ with $a>0$ being a constant. If $a=a(t)$ is a periodic function with
$\bar{a} >0$, then the function $w:= ue^{-\int_0^t [a(s) -\bar{a}] ds}$ satisfies $w_t =w_{xx} + \bar{a}w$, and so the conclusions in \cite[Proposition 2.3]{HNRR} hold for $w$.} and its proof that there are positive constants $C_0$, $C_1$, $C_2$,
and $t_0$ depending on $u_0$ such that (see also \cite{GLZ})
\begin{equation}\label{usma112}
v\left(t, \bar{c}t-\frac{3}{\bar{c}}\ln \Big( 1+\frac{t}{t_0} \Big)+z\right)
\leqslant C_0 Z(t,z), \ \ t>0,\ z\geqslant h_0,
\end{equation}
where
\[
Z(t,z):=\frac{C_1}{\sqrt{t_0}}z e^{ -\frac{\bar{c}}{2} z}
\Big[C_2e^{\frac{-z^2} {4(t+t_0)}}+\xi(t,z)\Big],\quad t>0,\ z\in\R,
\]
with $\xi(t,z)$ satisfying
\[
\limsup_{t\to\infty} \sup_{0\leqslant z\leqslant \sqrt{t+1}}|\xi(t,z)|\leqslant \frac{C_2}{2}.
\]
Thanks to \eqref{usma11} and \eqref{usma112}, it is easy to check that
\begin{equation}\label{uss11}
u(t, x)\leqslant C_3 e^{-\frac{5\bar{c}}{12}(Y(t)-x)} \ \mbox{ for }
\max\{Y(t)-\sqrt{t}, g(t)\}\leqslant x\leqslant \min\{Y(t), h(t)\},\ t\gg 1,
\end{equation}
where $C_3$ is a positive constant and
\begin{equation}\label{Yt}
Y(t):= \bar{\beta} t - \bar{c} t + \frac{3}{\bar{c}} \ln t \quad \mbox{for } t>0.
\end{equation}

\medskip
Now we consider the case $\bar{\beta} \geqslant \bar{c}$ and prove the boundedness of $g_\infty$
and the locally uniform convergence  $u\to 0$.

\begin{lem}\label{lcr}
Assume $\bar{\beta}\geqslant\bar{c}$ and $(u,g,h)$ is a solution of
\eqref{p}. Then
\begin{itemize}
\item[(i)] for any $K \in I_\infty$, $\|u(t,\cdot)\|_{L^\infty ([g(t), K])} \to 0$ as $t\to\infty$;
\item[(ii)]   $g_\infty > -\infty$.
\end{itemize}
\end{lem}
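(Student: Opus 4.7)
The plan is to prove (i) directly from the two supersolution estimates \eqref{qqul} and \eqref{uss11} recorded above, and to prove (ii) by showing that the same estimates deliver an $L^1$-in-time bound for $g'(t)$ once the free boundary is straightened.

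For (i), the key observation is that the hypothesis $\bar{\beta}\geqslant \bar{c}$ forces $Y(t)=(\bar{\beta}-\bar{c})t+\frac{3}{\bar{c}}\ln t\to +\infty$, so every fixed $K\in I_\infty$ eventually satisfies $K<\min\{Y(t),h(t)\}$. For such $t$ I would split $[g(t),K]$ at $x=Y(t)-\sqrt{t}$. On the right piece $\{x\geqslant Y(t)-\sqrt{t}\}\cap [g(t),K]$, estimate \eqref{uss11} gives $u(t,x)\leqslant C_3 e^{-\frac{5\bar{c}}{12}(Y(t)-K)}\to 0$. On the left piece $\{x<Y(t)-\sqrt{t}\}\cap [g(t),K]$, the shift $z:=x+\bar{c}t-\int_0^t\beta(s)ds+x_0$ satisfies $z\leqslant -\sqrt{t}+\frac{3}{\bar{c}}\ln t+O(1)$ (since $\int_0^t \tilde{\beta}(s)\,ds$ is bounded), so \eqref{qqul} combined with the asymptotic \eqref{qq1} of $Q_M$ yields $u(t,x)\leqslant C|z|e^{\bar{c}z/2}\leqslant C\sqrt{t}\,e^{-\bar{c}\sqrt{t}/2}\to 0$. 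Gluing the two pieces completes (i).

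For (ii), I would straighten the free boundary by setting $v(t,\xi):=u(t,\xi+g(t))$, which solves a uniformly parabolic equation on $\xi>0$ with $v(t,0)=0$ and drift coefficient $g'(t)-\beta(t)$. A standard a priori bootstrap (as in \cite{DuLin,DuLou}) yields a uniform bound $|g'(t)|\leqslant \Lambda$, making the coefficient bounded. Parabolic boundary regularity (Schauder or Hopf-type) on the cylinder $[t-1,t]\times [0,1]$ then delivers
\begin{equation*}
|g'(t)|=\mu(t)\,v_\xi(t,0)\leqslant C\sup_{s\in [t-1,t],\; x\in [g(s),g(s)+1]} u(s,x).
\end{equation*}
Applying the same case split as in (i) to the point $x=g(s)+1\leqslant -h_0+1$, and using the monotonicity of $Q_M$ in its second variable together with the monotonicity of $x\mapsto e^{-\frac{5\bar{c}}{12}(Y(s)-x)}$, the right-hand side is dominated by
\begin{equation*}
C_3\, e^{-\frac{5\bar{c}}{12}(Y(s)+h_0-1)}+C\sqrt{s}\,e^{-\bar{c}\sqrt{s}/2}.
\end{equation*}
The first summand is $O(s^{-5/4})$ when $\bar{\beta}=\bar{c}$ and exponentially small when $\bar{\beta}>\bar{c}$; the second is super-exponentially small. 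Both are integrable on $[1,\infty)$, so $\int_0^\infty |g'(t)|\,dt<\infty$, and since $g'\leqslant 0$ this forces $g_\infty>-\infty$.

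The delicate step is the boundary regularity after straightening: one must first secure the uniform bound $|g'|\leqslant \Lambda$ to control the drift coefficient of $v$, and then convert $L^\infty$-smallness of $u$ near $g(t)$ into the pointwise smallness of $u_x$ at the free boundary. The critical case $\bar{\beta}=\bar{c}$ is the subtlest one, since there is no linear rightward drift of the back and the integrability of the boundary estimate rests entirely on the logarithmic correction $\frac{3}{\bar{c}}\ln t$ in $Y(t)$, producing only the polynomial decay $s^{-5/4}$ rather than the exponential decay available when $\bar{\beta}>\bar{c}$.
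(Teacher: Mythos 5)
Your proof of part (i) takes essentially the same approach as the paper's: both rely on the two supersolution estimates \eqref{qqul} (with the asymptotics \eqref{qq1} of $Q_M$) and \eqref{uss11}, splitting the interval at $Y(t)-\sqrt{t}$. You simply unify the cases $\bar{\beta}>\bar{c}$ and $\bar{\beta}=\bar{c}$ into a single argument, whereas the paper treats them separately (Case 1 / Case 2 / Subcases 2.1--2.2). The paper also records the precise quantitative rate $u(t,x)\leqslant C't^{-5/4}$ on $[g(t),K]$ because it reuses this constant in part (ii); you obtain the same rates implicitly when you need them.

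For part (ii) your route is genuinely different, and it works. The paper builds an explicit upper solution $(w,-\rho,-\rho+\zeta)$ on a shifted interval using the principal eigenfunction $\varphi_1$ of \eqref{eigen-p} with $k=-\beta$, matching the time-decay $C'(t+\tau_1)^{-5/4}$ from part (i), and then observes that the shift $\rho(t)$ stays bounded because $\int^\infty (t+\tau_1)^{-5/4}\,dt<\infty$, whence $g(t)\geqslant -\rho(t)>-\infty$. You instead control $g'$ directly: after straightening the boundary and invoking the standard uniform Lipschitz bound $|g'|\leqslant\Lambda$ (so the drift coefficient $g'(t)-\beta(t)$ is bounded), a boundary $L^p/$Schauder estimate on the fixed cylinder gives $|g'(t)|=\mu(t)u_x(t,g(t))\leqslant C\sup_{[t-1,t]\times[g(s),g(s)+1]}u$, and you then bound this supremum by the same case split as in (i). The dominant contribution $C_3 e^{-\frac{5\bar c}{12}(Y(s)+h_0-1)}$ is $O(s^{-5/4})$ at the critical value $\bar{\beta}=\bar{c}$ (thanks to the logarithmic correction in $Y$) and exponentially small when $\bar{\beta}>\bar{c}$, while the left-piece contribution $C\sqrt{s}\,e^{-\bar{c}\sqrt{s}/2}$ is always super-exponentially small. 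Both are integrable on $[1,\infty)$, and since $g'\leqslant 0$ you conclude $g_\infty>-\infty$. This is a cleaner mechanism conceptually (converting pointwise decay into an $L^1$ bound on the front speed directly), at the cost of invoking two pieces of standard theory (the a priori Lipschitz bound on the free boundary and boundary parabolic regularity) that the paper's bare-hands construction avoids. Both arguments pivot on the same $t^{-5/4}$ decay and both handle the critical case $\bar{\beta}=\bar{c}$ in the same way, namely through the logarithmic shift in $Y(t)$ inherited from the Bramson-type estimate \eqref{usma112}.
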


\begin{proof}
(i). We prove the conclusion by showing that
\begin{equation}\label{utt0}
u(t,x)\leqslant C' t^{-\frac{5}{4}},\quad x\in[g(t), K],\ t\gg 1,
\end{equation}
for some $C'>0$.

{\it Case 1: $\bar{\beta} > \bar{c}$}. In this case, for $x\in [g(t), K]$ and $t\gg 1$, by
\eqref{qqul} and \eqref{qq1} we have
\begin{eqnarray*}
u(t,x) & \leqslant &  Q_M ( t, K +\bar{c}t - \bar{\beta} t +O(1) ) \leqslant C_1 (\bar{\beta}-\bar{c})t
e^{\frac{\bar{c}}{2} (\bar{c} -\bar{\beta})t}  \leqslant  C' t^{-\frac54},
\end{eqnarray*}
provided $C' >0$ is large enough, where $C_1 >0$ is a constant independent of $t$.

{\it Case 2: $\bar{\beta} =\bar{c}$}. In this case we have $Y(t) = \frac{3}{\bar{c}} \ln t$.

{\it Subcase 2.1:  $g(t) < Y(t)-\sqrt{t}$ for some $t \gg 1$}. In this case,
for $x\in[g(t),Y(t)-\sqrt{t}]\not= \emptyset$ and $t\gg 1$, by \eqref{qqul} and \eqref{qq1} we have
\begin{eqnarray*}
u(t,x) & \leqslant & Q_M (t, Y(t)-\sqrt{t} + O(1) )  =  Q_M \Big(  t, \frac{3}{\bar{c}}  \ln t -\sqrt{t} +O(1)\Big) \\
& \leqslant &  Q_M \Big( t, -\frac{3}{\bar{c}}  \ln t \Big)  \leqslant C_2  \ln t \cdot t^{-\frac{3}{2}}
\leqslant C' t^{-\frac{5}{4}},
\end{eqnarray*}
provided $C' >0$ is large, where $C_2$ is a constant independent of $t$.
For $x \in [Y(t)-\sqrt{t}, K]$ and $t\gg 1$, by \eqref{uss11} we have
\begin{equation}\label{g(t)-K}
u(t,x)\leqslant  C_3 e^{-\frac{5\bar{c}}{12}(Y(t)-K)}= C_3 e^{\frac{5\bar{c} K}{12}} \cdot t^{-\frac{5}{4}}.
\end{equation}

{\it Subcase 2.2: $Y(t) -\sqrt{t} <g(t)$ for some $t\gg 1$}.
In this case, for $x\in [g(t), K]$ and $t\gg 1$, the inequalities in \eqref{g(t)-K} remain valid, and so
\eqref{utt0} holds.

\smallskip

(ii). We now use the principal eigenfunction and the estimate \eqref{utt0}
to construct a suitable upper solution to prove $g_\infty>-\infty$.
Since $\bar{\beta}\geqslant\bar{c}$, for any given $\ell>0$, it is well known that
the problem \eqref{eigen-p} with $k(t)=-\beta(t)$, admits a unique positive principal eigenvalue $\lambda_1$,
whose corresponding positive eigenfunction is denoted by $\varphi_1(t,x)$.
Let $\zeta_l(t)$ be the leftmost local maximum point of $\varphi_1$, and set
\[
\zeta:=\min_{t\in [0,T]}\zeta_l(t),\  \epsilon:=\min_{t\in [0,T]}\varphi_1(t,\zeta),\ \mu^0:=\max_{t\in [0,T]}\mu(t),
\ D:=\max_{t\in [0,T]}(\varphi_1)_x(t,0),
\]
then
\[
\zeta>0,\ \ D>0 \ \mbox{ and }\ \  (\varphi_1)_x(t,x)> 0\ \mbox{ for } (t,x)\in[0,\infty)\times[0, \zeta ).
\]
Choose an integer $m_1$ such that $\tau_1 := m_1 T \geqslant 1+ \frac{5}{4\lambda_1}$ and that \eqref{utt0} holds
for $t>\tau_1$. Define
\[
\rho(t):=-g(\tau_1)+\zeta+4C'\mu^0D\epsilon^{-1} \big[ \tau_1^{-\frac{1}{4}}
-(t+ \tau_1)^{-\frac{1}{4}} \big]\ \mbox{ for } t\geqslant 0,
\]
\[
w(t,x):=C'\epsilon^{-1}(t+\tau_1)^{-\frac{5}{4}}\varphi_1
(t,x+\rho(t)) \ \mbox{ for } t\geqslant 0,\
x\in [-\rho(t), -\rho(t)+\zeta].
\]
A direct calculation yields that
$$
w_t-w_{xx}+\beta(t)w_x-f(t,w) \geqslant w\Big(\lambda_1-\frac{5}{4(t+\tau_1)}\Big)\geqslant 0, \quad t>0,\ x\in[-\rho(t), -\rho(t)+\zeta],
$$
\[
-\rho'(t)=-C'\mu^0D\epsilon^{-1}(t+\tau_1)^{-\frac{5}{4}}
\leqslant -\mu(t)w_x(t,-\rho(t)), \quad t\geqslant 0,
\]
and when $-\rho(t)+\zeta \in [g(t+\tau_1 ), h(t +\tau_1 )]$ for some $t>0$ we have
$$
w(t,-\rho(t)+\zeta)  =  C'\epsilon^{-1}(t+\tau_1)^{-\frac{5}{4}}\varphi_1 (t,\zeta)  \geqslant  C'(t+\tau_1)^{-\frac{5}{4}}
 \geqslant  u(t+\tau_1, -\rho(t)+\zeta ).
$$
Here we used the estimate \eqref{utt0} in the last inequality. Hence,
$(w, -\rho, -\rho +\zeta)$ is an upper solution of \eqref{p}, and so
by the comparison principle (cf. \cite[Lemma 2.2]{DuLou}) we have
\[
g(t+\tau_1) \geqslant  -\rho(t) >  g(\tau_1)-\zeta-4C'\mu^0D
\epsilon^{-1}\tau_1^{-\frac{1}{4}}>-\infty,\quad t \geqslant 0.
\]
This completes the proof of the lemma.
\end{proof}

\begin{remark}\rm
We remark that this lemma is an analogue of Lemma 4.1 and Proposition 4.6 in \cite{GLZ}.
But our construction for upper solutions is more complicated. The difficulty is that in the present
case we have no monotonicity for $u(t,\cdot)$ in the interval $[g(t), -h_0]$ (since
$\beta(t)$ may change sign).
The boundedness of $g(t)$ indicates that when the advection intensity is large, namely, when
$\bar{\beta}\geqslant\bar{c}$, spreading does not happen.
\end{remark}

\begin{remark}\label{rem:h finite u to 0}\rm
The conclusion (i) in the previous lemma also  indicates that if $h_\infty <\infty$, then
$\|u(t,\cdot)\|_{L^\infty ([g(t),h(t)])}$ $\to 0$ as $t\to \infty$.
Hence $h_\infty <\infty$ is a necessary and sufficient condition for vanishing in case $\bar{\beta} \geqslant \bar{c}$.
We will give other sufficient conditions for vanishing in the next two lemmas.
\end{remark}

Next let us consider the case $\bar{\beta} \geqslant B(\tilde{\beta}) $ and prove the boundedness of $h_\infty$ and the uniform convergence $u\to 0$.

\begin{lem}\label{boundh}
Assume $\HH$, $\HHH$, $\bar{\beta} \geqslant B(\tilde{\beta})$ and that $(u,g,h)$ is a
solution of \eqref{p}. Then
\begin{itemize}
\item[(i)]  $\|u(t,\cdot)\|_{L^\infty ([g(t), h(t)])} \to 0$ as $t\to\infty$;
\item[(ii)]   $h_\infty <\infty$.
\end{itemize}
\end{lem}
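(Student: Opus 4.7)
The strategy has three stages. First, I would build a supersolution from the periodic rightward traveling semi-wave of Proposition \ref{prop:tsw right} to obtain the rough bound $h(t)\leqslant R(t;\beta)+H$. Second, the back estimates \eqref{qqul}--\eqref{uss11} upgrade this into polynomial $t$-decay of $\|u(t,\cdot)\|_\infty$. Third, the Stefan condition combined with boundary parabolic regularity converts this decay into integrability of $h'(t)$, forcing $h_\infty<\infty$.

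Let $r(\cdot;\beta)$, $U(t,z;\beta-r)$ and $R(t;\beta):=\int_0^t r(s;\beta)\,ds$ be as in Proposition \ref{prop:tsw right}. Since $U(0,\cdot;\beta-r)$ is strictly increasing with $U(0,\infty;\beta-r)=P(0)$, one can fix $H>h_0$ so large that $U(0,H-x;\beta-r)\geqslant u_0(x)$ on $[-h_0,h_0]$. For any $\varepsilon>0$ set
\[
\bar u_\varepsilon(t,x):=U\bigl(t,\,R(t;\beta)+H+\varepsilon t-x;\,\beta-r(\cdot;\beta)\bigr),\qquad \bar h_\varepsilon(t):=R(t;\beta)+H+\varepsilon t.
\]
A direct calculation using the equation satisfied by $U$ gives $\bar u_{\varepsilon,t}-\bar u_{\varepsilon,xx}+\beta\bar u_{\varepsilon,x}-f(t,\bar u_\varepsilon)=\varepsilon U_z>0$ in the interior, together with $\bar u_\varepsilon(t,\bar h_\varepsilon(t))=0$ and $\bar h_\varepsilon'(t)=r(t;\beta)+\varepsilon>r(t;\beta)=-\mu(t)\bar u_{\varepsilon,x}(t,\bar h_\varepsilon(t))$. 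Thus $(\bar u_\varepsilon,-\infty,\bar h_\varepsilon)$ is a strict supersolution of \eqref{p}, and the Stefan-type comparison principle yields $h(t)\leqslant \bar h_\varepsilon(t)$; letting $\varepsilon\downarrow 0$ gives $h(t)\leqslant R(t;\beta)+H$ for all $t\geqslant 0$.

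By the hypothesis $\bar\beta\geqslant B(\tilde\beta)$ and Lemma \ref{lem:B(theta)}, $\ov{r(t;\beta)}\leqslant \bar\beta-\bar c$, so $R(t;\beta)=\ov{r}\,t+O(1)\leqslant(\bar\beta-\bar c)t+O(1)$. Recalling the definition \eqref{Yt} of $Y(t)=(\bar\beta-\bar c)t+\tfrac{3}{\bar c}\ln t$, Step 1 yields $Y(t)-h(t)\geqslant \tfrac{3}{\bar c}\ln t-C$ for large $t$, and in particular $h(t)\leqslant Y(t)$ eventually. Applying \eqref{uss11} on $[\max\{Y(t)-\sqrt t,g(t)\},h(t)]$ gives
\[
u(t,x)\leqslant C_3\,e^{-\frac{5\bar c}{12}(Y(t)-h(t))}\leqslant C_\ast\,t^{-5/4},
\]
while on the complementary piece $[g(t),Y(t)-\sqrt t]$ (when nonempty) the bound \eqref{qqul} together with the asymptotics \eqref{qq1} of $Q_M$ produces decay faster than any polynomial in $t$. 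Hence $\|u(t,\cdot)\|_{L^\infty([g(t),h(t)])}\leqslant C_\ast t^{-5/4}$ for $t\gg 1$.

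Finally, straightening the right free boundary by $y:=h(t)-x$, $v(t,y):=u(t,h(t)-y)$ turns \eqref{p}$_1$ into a parabolic equation with H\"older coefficients (the drift $\beta(t)-h'(t)$ lies in $C^{\nu/2}$ by $\HH$ together with the global $C^{1+\nu/2,2+\nu}$ estimate on $u$) and $v(t,0)=0$. Standard boundary Schauder estimates then yield $|u_x(t,h(t))|=|v_y(t,0)|\leqslant C\|v\|_{L^\infty([t-1,t]\times[0,1])}\leqslant C' t^{-5/4}$, hence $h'(t)\leqslant C'' t^{-5/4}$, which is integrable in $t$. Thus $h_\infty<\infty$, proving (ii), and (i) follows immediately from Remark \ref{rem:h finite u to 0}. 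The main technical obstacle is the critical case $\bar\beta=B(\tilde\beta)$, where $Q^*$ and the semi-wave $U$ have identical average speeds; only the Bramson-type logarithmic correction $\tfrac{3}{\bar c}\ln t$ hidden in $Y(t)$ creates a positive gap between $h(t)$ and $Y(t)$, and this gap is exactly what forces the precisely integrable rate $t^{-5/4}$.
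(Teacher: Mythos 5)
Your overall strategy is sound, but there is a genuine gap in Stage 1, and Stage 3 takes a different (valid) route from the paper's.

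\textbf{Gap in Stage 1.} The comparison $U(0,H-x;\beta-r)\geqslant u_0(x)$ on $[-h_0,h_0]$ need not hold for any $H$: since $U(t,\cdot;\beta-r)<P(t)$ always, the inequality fails whenever $u_0$ exceeds $P(0)$ somewhere, and $\mathscr{X}(h_0)$ places no upper bound on $u_0$. The supersolution $U(t,R(t)+H+\varepsilon t-x;\beta-r)$ alone therefore cannot dominate $u$ initially. This is precisely why the paper (Step 1 of Proposition \ref{pro:sigma01}, invoked via Remark \ref{rem:est h all beta} in the proof of Lemma \ref{boundh}) works with the multiplicative supersolution $v^+(t,x)=(1+M'e^{-\delta t})V(t,h^+(t)-x)$, and it is there that the condition $\HHH$ (which gives $F_v\leqslant -\delta$ near $v=1$) is actually used to verify $\mathcal{N}v^+\geqslant 0$ away from the free boundary, while the term $K\delta V_z$ handles the region near $z=0$. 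You assume $\HHH$ but never use it; inserting the paper's $(1+M'e^{-\delta t})$ factor would both close the gap and account for $\HHH$. An additive perturbation $U+Me^{-\delta t}$ would not work here, since near the free boundary $f_u(t,0)=a(t)>0$ defeats the exponential decay; the multiplicative structure is essential.

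\textbf{Stage 2} (the $t^{-5/4}$ decay via \eqref{qqul}, \eqref{qq1}, \eqref{uss11}, and the $\tfrac{3}{\bar c}\ln t$ Bramson shift in $Y(t)$) matches the paper's Step (i) in substance. Your unified phrasing through the single lower bound $Y(t)-h(t)\geqslant\tfrac{3}{\bar c}\ln t-C$ is fine; in the strict case $\bar\beta>B(\tilde\beta)$ the interval $[Y(t)-\sqrt t,h(t)]$ is actually empty for large $t$, so \eqref{uss11} contributes nothing there and the decay is exponential, but this only strengthens the conclusion.

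\textbf{Stage 3} is a genuinely different route from the paper. The paper builds an upper solution $(w_*,\rho_*+\zeta_*,\rho_*+\ell)$ from the principal eigenfunction $\varphi_1$ of \eqref{eigen-p} moving with a uniformly bounded shift $\rho_*(t)$, and reads off boundedness of $h$ by comparison. You instead straighten the right boundary and use the uniform $C^{1+\nu/2,2+\nu}$ estimate (available because $h'=-\mu u_x(t,h(t))$ is uniformly $C^{\nu/2}$ once $u$ is bounded) to get $|u_x(t,h(t))|\leqslant C\|u\|_{L^\infty}\leqslant C't^{-5/4}$ and hence $h'\in L^1$. Both arguments are correct; yours is shorter and avoids the explicit eigenfunction construction, while the paper's is self-contained at the level of the comparison principle and mirrors the treatment of $g_\infty>-\infty$ in Lemma \ref{lcr}(ii). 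Finally, note that the $t^{-5/4}$ estimate already \emph{is} statement (i), so citing Remark \ref{rem:h finite u to 0} at the end for (i) is unnecessary (and circular): the paper proves (i) directly by the decay estimate and then uses it for (ii), which is the order you should present as well.
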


\begin{proof}
(i). We prove the conclusion by showing that
\begin{equation}\label{est u large beta}
u(t,x)\leqslant C'' t^{-\frac{5}{4}},\quad x\in[g(t), h(t)],\ t\gg 1,
\end{equation}
for some $C''>0$. We will use a conclusion in Remark \ref{rem:est h all beta} in Section 5
(whose proof is independent of the current conclusions), which says that
\begin{equation}\label{hbound00}
h(t)\leqslant \bar{r}t+H_r\ \mbox{ for all }t\geqslant0 \mbox{ and some } H_r >0.
\end{equation}

{\it Case 1: $\bar{\beta} > B(\tilde{\beta})$}. In this case we have $\kappa:=\bar{\beta}-\bar{c}-\bar{r}>0$
by the definition of $B(\theta)$. Thanks to \eqref{qqul}, \eqref{hbound00} and \eqref{qq1}, we have
$$
u(t,x)  \leqslant   Q_M ( t, h(t) +\bar{c}t - \bar{\beta} t +O(1) ) \leqslant Q_M (t, -\kappa t +O(1))
 \leqslant  C_4 t e^{- \frac{\bar{c} \kappa }{2} t}  \leqslant  C'' t^{-\frac54},
$$
provided $C'' >0$ is large enough, where $C_4 >0$ is a constant independent of $t$.

{\it Case 2: $\bar{\beta} = B(\tilde{\beta} )$}. In this case we have
$$
Y(t) -\sqrt{t} = (\bar{\beta} -\bar{c})t + \frac{3}{\bar{c}}\ln t -\sqrt{t} +O(1) >g(t),\quad t\gg 1.
$$
For $x\in[g(t),Y(t)-\sqrt{t}] $ and $t\gg 1$, by \eqref{qqul} and \eqref{qq1} we have
\begin{eqnarray*}
u(t,x) & \leqslant & Q_M (t, Y(t)-\sqrt{t} +\bar{c} t - \bar{\beta} t + O(1) )  =  Q_M \Big(  t, \frac{3}{\bar{c}}  \ln t -\sqrt{t} +O(1)\Big) \\
& \leqslant &  Q_M \Big( t, -\frac{3}{\bar{c}}  \ln t \Big)  \leqslant C_5  \ln t \cdot t^{-\frac{3}{2}}
\leqslant C'' t^{-\frac{5}{4}},
\end{eqnarray*}
provided $C'' >0$ is large, where $C_5$ is a constant independent of $t$.
For $x \in [Y(t)-\sqrt{t}, h(t)]$ and $t\gg 1$, by \eqref{uss11} and \eqref{hbound00} we have
\begin{equation}\label{g(t)-h(t)}
u(t,x)\leqslant  C_3 e^{-\frac{5\bar{c}}{12}(Y(t)- h(t))} \leqslant C'' t^{-\frac{5}{4}},\quad \mbox{provided } C'' >0 \mbox{ is large enough}.
\end{equation}

\medskip

(ii). Based on the estimate \eqref{est u large beta} for $u$, we now construct
an upper solution to prove $h_\infty <\infty$. Since $\bar{\beta}\geqslant B(\tilde{\beta})$, for any given $\ell>0$, the problem \eqref{eigen-p} with $k(t)=-\beta(t)$ admits a unique positive principal eigenvalue $\lambda_1$, whose corresponding positive eigenfunction is denoted by
$\varphi_1(t,x)$. Let $\zeta_r (t)$ be the rightmost local maximum point
of  $\varphi_1(t,\cdot)$, and set $\zeta_*:=\max_{t\in [0,T]}\zeta_r (t),\ \epsilon_*:=\min_{t\in [0,T]}\varphi_1(t,\zeta_*)$
and $D_*:=\max_{t\in [0,T]} |\varphi_{1x} (t,\ell)|$, then
\[
D_* >0,\ \ \ (\varphi_1)_x(t,x)< 0\ \mbox{ for } (t,x)\in[0,\infty)\times (\zeta_*,\ell].
\]
Choose an integer $m_2$ such that $\tau_2  := m_2 T > 1 + \frac{5}{4\lambda_1}$ and that \eqref{est u large beta}
holds for $t>\tau_2$. Denote $\mu^0:=\max_{t\in [0,T]}\mu(t)$,
\[
\rho_*(t):=h(\tau_2)+\zeta_* + 4C'' \mu^0 D_*\epsilon^{-1}_*
[\tau_2^{-\frac{1}{4}}-(t+ \tau_2)^{-\frac{1}{4}}]\ \mbox{ for } t\geqslant 0,
\]
and
\[
w_*(t,x):=C'' \epsilon_*^{-1}(t+ \tau_2)^{-\frac{5}{4}}\varphi_1 (t,x-\rho_*(t)) \ \mbox{ for } t\geqslant 0,\
x\in [\rho_*(t)+\zeta_*, \rho_*(t)+\ell].
\]
A direct calculation as in the proof of the previous lemma shows
that $(w_*, \rho_*+\zeta_*, \rho_*+\ell)$ is an upper solution to
\eqref{p}, and so by comparison we have
\[
h(t+\tau_2) < \rho_* (t) +\ell < \ell +  h(\tau_2)+ \zeta_* +  4C'' \mu^0 D_*\epsilon^{-1}_*\tau_2^{-\frac{1}{4}}<\infty,
\quad t\geqslant 0.
\]
This proves the lemma.
\end{proof}

\smallskip

\noindent
{\it Proof of Theorem \ref{thm:large beta}.}
The conclusions in Theorem \ref{thm:large beta} follow from Lemma \ref{boundh} immediately.
\hfill
$\square$

\subsection{Problem with medium-sized advection: $\bar{c}\leqslant \bar{\beta} <B(\tilde{\beta})$}\label{bbbmd}
In this subsection we consider the case with medium-sized advection. New phenomena like virtual spreading
and transition happen for some solutions.
In the first part we give some conditions for vanishing and for
virtual spreading, in the second part we prove Theorem \ref{thm:middle beta}.

\subsubsection{Vanishing and virtual spreading phenomena}\label{vvsp}
When $\bar{\beta}\geqslant\bar{c}$, it follows from Lemma \ref{lcr} that $g_\infty>-\infty$ and
$u\to 0$ in $[g(t), K]$ for any $K \in I_\infty$. We now present a sufficient condition
for vanishing.

\begin{lem}\label{mavan}
Assume that $\bar{c}\leqslant\bar{\beta}<B(\tilde{\beta})$, and $(u,g,h)$ is
the solution of \eqref{p}. Then vanishing happens when $\|u_0\|_{L^\infty([-h_0,h_0])}$
is sufficiently small.
\end{lem}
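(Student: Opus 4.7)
By Lemma \ref{lcr}, the assumption $\bar{\beta}\geqslant\bar c$ automatically gives $g_\infty>-\infty$ together with $u(t,\cdot)\to 0$ locally in $(g_\infty,h_\infty)$, and by Remark \ref{rem:h finite u to 0} vanishing is equivalent to $h_\infty<\infty$. So the plan is to show that, when $\|u_0\|_{L^\infty}$ is small, both $g(t)>-h_1$ and $h(t)<h_1$ hold for all time, where $h_1>h_0$ is arbitrarily chosen. The strategy closely mirrors Lemma \ref{vfsma}, but now takes advantage of the key feature of the medium-sized regime: by Lemma \ref{lem:1eigenvalue}, since $|\overline{-\beta}|=\bar\beta\geqslant\bar c$, the principal eigenvalue $\lambda_1$ of \eqref{eigen-p} with $k=-\beta$ on $[0,2h_1]$ is positive for \emph{every} $h_1$, so the upper bound on $h_0$ present in Lemma \ref{vfsma} disappears and is replaced by a smallness hypothesis on $u_0$ alone.

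Concretely, I would take the positive principal eigenfunction $\varphi_1(t,x)$ of \eqref{eigen-p} on $[0,2h_1]$, normalized by $\|\varphi_1\|_{L^\infty}=1$. Let $\zeta_l(t)$ and $\zeta_r(t)$ be the leftmost and rightmost local maxima of $\varphi_1(t,\cdot)$, set $\eta_1:=\min_t\zeta_l(t)$, $\eta_2:=\max_t\zeta_r(t)$, $\varepsilon_0:=\min\bigl\{\min_t\varphi_1(t,\eta_1),\,\min_t\varphi_1(t,\eta_2)\bigr\}$, choose $\delta\in(0,\min\{\lambda_1,h_1/h_0-1\})$ and a small $\varepsilon_1>0$ satisfying $2\varepsilon_1\max_t\bigl[\mu(t)|\varphi_{1x}(t,0)|+\mu(t)|\varphi_{1x}(t,2h_1)|\bigr]<\delta^2 h_0$. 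Using these, define the fixed-box super-solution
\[
w(t,x):=\varepsilon_0\varepsilon_1 e^{-\delta t}\varphi_1(t,x+h_1)\quad\text{on } [-h_1,h_1],
\]
together with two moving-boundary super-solutions
\[
v_r(t,x):=\varepsilon_1 e^{-\delta t}\varphi_1(t,x-\xi_r(t)+2h_1),\qquad v_l(t,x):=\varepsilon_1 e^{-\delta t}\varphi_1(t,x+\xi_l(t)),
\]
supported on the decreasing region $[\eta_2+\xi_r(t)-2h_1,\xi_r(t)]$ and on the increasing region $[-\xi_l(t),\eta_1-\xi_l(t)]$ of $\varphi_1$, with $\xi_r(t)=\xi_l(t):=h_0\bigl(1+\delta-(\delta/2)e^{-\delta t}\bigr)$. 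Provided $\|u_0\|_{L^\infty}$ is below $\varepsilon_0\varepsilon_1\min_{|x|\leqslant h_0}\varphi_1(0,x+h_1)$, one has $u_0\leqslant w(0,\cdot)$ and the corresponding initial compatibility for $v_r,\, v_l$.

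The endgame is then a continuation argument as in Lemma \ref{vfsma}: set $\tau:=\sup\{t>0:g(t)>-h_1\text{ and }h(t)<h_1\}$ and suppose $\tau<\infty$. On $[0,\tau)$, the comparison principle on the fixed box $[-h_1,h_1]$ yields $u\leqslant w$; on any sub-interval where $h(t)>\eta_2+\xi_r(t)-2h_1$, comparison with $v_r$ (whose inner Dirichlet boundary lies under $w$ and whose outer boundary $x=\xi_r(t)$ satisfies the Stefan compatibility by the choice of $\varepsilon_1$) gives $h(t)\leqslant\xi_r(t)$, and symmetrically $g(t)\geqslant-\xi_l(t)$. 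Since $\xi_r(t),\,-\xi_l(t)<h_1$ for all $t$, this contradicts $\tau<\infty$. Hence $\tau=\infty$, $h_\infty\leqslant h_0(1+\delta)<\infty$, and Remark \ref{rem:h finite u to 0} delivers vanishing.

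The main obstacle is the asymmetry introduced by $\beta(t)\not\equiv 0$: unlike the small-advection case, the two free boundaries cannot be controlled by symmetry, and the Stefan-type inequalities $\xi_r'(t)\geqslant-\mu(t)v_{rx}(t,\xi_r(t))$ and $-\xi_l'(t)\leqslant-\mu(t)v_{lx}(t,-\xi_l(t))$ must be verified independently from the derivative behaviour of $\varphi_{1x}$ at the two Dirichlet ends of $[0,2h_1]$. The positivity of $\lambda_1$ under $\bar\beta\geqslant\bar c$ for arbitrary $h_1>h_0$ is precisely what saves the day: it removes any upper bound on $h_1$ and lets the exponential decay $e^{-\delta t}$ dominate the drift term $\beta(t)w_x$ in the super-solution computation, closing the argument.
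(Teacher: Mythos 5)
Your proposal is correct and is essentially identical to the paper's proof, which simply observes that $\bar{\beta}\geqslant\bar{c}$ forces $\lambda_1>0$ for any $h_1>h_0$ by Lemma \ref{lem:1eigenvalue} and then refers back verbatim to the construction of Lemma \ref{vfsma}. Your explicit write-out of the two moving-frame super-solutions $v_r$ and $v_l$ is just what the paper's ``WLOG $h(\tau)=h_1$'' in Lemma \ref{vfsma} implicitly assumes, so there is no deviation in method.
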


\begin{proof}
For any fixed $h_1 >h_0$, we use $\lambda_1 $ to denote the principal eigenvalue
of the problem \eqref{eigen-p} with $\ell=2 h_1 $ and $k(t)=-\beta(t)$.
Since $\bar{\beta}\geqslant \bar{c}$, we have $\lambda_1 >0$ by Lemma \ref{lem:1eigenvalue}.
The rest of the proof is exactly the same as that for Lemma \ref{vfsma}. We omit the detail.
\end{proof}

Next we give a sufficient condition for virtual spreading, which means that spreading
should be considered in a moving frame.

\begin{lem}\label{mvirtsp}
Assume that $\bar{c}\leqslant\bar{\beta}<B(\tilde{\beta})$ and  $(u,g,h)$
is a solution of \eqref{p}. Then virtual spreading happens if and only if,
there exist $x_i,\ \epsilon_i, \ \ell_i$ ($i=1,2$) with $0<\epsilon_1 <\epsilon_2\ll 1$ and an integer
$m \geqslant 0$ such that
\begin{equation}\label{ssuso1}
u(mT ,x)\geqslant W^{\epsilon_i} (0, x-x_i) \ \ \mbox{ for }\ x\in [x_i -\ell_i ,\, x_i],\quad i=1,2,
\end{equation}
where, for each $i$, $W^{\epsilon_i}$ is the compactly supported traveling wave given in
Proposition \ref{prop:tw compact large}(ii).
\end{lem}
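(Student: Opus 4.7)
The plan is to prove both implications of the equivalence, starting with the sufficient direction. Since $W^{\epsilon_i}$ is a lower solution of \eqref{p} by Proposition \ref{prop:tw compact large}(ii), the comparison principle propagates the hypothesis \eqref{ssuso1} forward to give $u(mT+t,x)\geq W^{\epsilon_i}(t,x-x_i)$ on the moving support of the right-hand side. In particular, $h(mT+t)\geq x_i + R^{\epsilon_i}(t)$, where $R^{\epsilon_i}(t)=\int_0^t(r(s;\beta)-\epsilon_i)\,ds$ has average growth rate $\bar r-\epsilon_i$. Because $\bar\beta<B(\tilde\beta)$, Lemma \ref{lem:property of beta*} (applied with $\beta^*=B(\tilde\beta)+\tilde\beta$) yields $\bar r>\bar\beta-\bar c$, so for $\epsilon_2$ sufficiently small one has $h(t)-(\bar\beta-\bar c)t\to+\infty$. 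Combining this with $g_\infty>-\infty$ and $u(t,\cdot)\to 0$ locally uniformly in $(g_\infty,\infty)$ supplied by Lemma \ref{lcr}, the first three requirements of virtual spreading are in hand.

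The delicate remaining task is to verify $u(t,\cdot+c_1 t)-P(t)\to 0$ locally uniformly in $\R$ for some $c_1>0$. I would choose $c_1$ in the non-empty interval $(\bar\beta-\bar c,\ \bar r-\epsilon_2)$ and set $\tilde u(t,y):=u(t,y+c_1 t)$. In this moving frame, the support of $\tilde u$ expands on both sides, since $c_1>0$ together with $g_\infty>-\infty$ sends the left endpoint to $-\infty$, while $h(t)-c_1 t\to\infty$ sends the right endpoint to $+\infty$. To boost $\tilde u$ up to $P(t)$, I would iterate with progressively wider compactly-supported traveling waves: the strict inequality $r^{\epsilon_i}<A_0[\beta-r^{\epsilon_i},\ell_i]$ of Proposition \ref{prop:tw compact large}(ii) creates a positive gap between $u$ and $W^{\epsilon_i}$ near the lower solution's right boundary, which propagates forward via the comparison principle. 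Using $U_0(t,z+\ell/2;k,\ell)\to P(t)$ as $\ell\to\infty$ from Lemma \ref{lem:positive sol DD0}, at suitably chosen later times $t_k=mT+n_k T$ the solution $u(t_k,\cdot)$ dominates a lower barrier $W^{\epsilon'}$ with larger $\ell'$, hence with peak closer to $P(t)$. Passing to subsequential limits of $\tilde u(t_k+\cdot,\cdot)$ yields $\liminf_{t\to\infty}\tilde u(t,y)\geq P(t)$ locally uniformly, while the matching upper bound follows from comparing $u$ with the spatially-constant attractor of $U_t=f(t,U)$.

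Necessity is easier: if virtual spreading holds, then $u(t,\cdot+c_1 t)\to P(t)$ locally uniformly for the spreading speed $c_1>0$. Fix any $0<\epsilon_1<\epsilon_2\ll 1$ and choose $\ell_i$ large enough that Proposition \ref{prop:tw compact large}(ii) applies; then each $W^{\epsilon_i}(0,\cdot)$ has maximum strictly below $\max_{[0,T]}P$ by Lemma \ref{lem:positive sol DD0}, since $\ell_i$ is finite. Because $u(mT,x)$ is uniformly close to $P(0)$ on arbitrarily wide intervals centered near $c_1 mT$ once $m$ is large, choosing $x_i\approx c_1 mT$ lets both shifted barriers $W^{\epsilon_i}(0,\cdot-x_i)$ fit under $u(mT,\cdot)$ simultaneously, which establishes \eqref{ssuso1}. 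The main obstacle is the bootstrap in the sufficient direction, because the moving-frame equation has drift with average $-\bar\beta-c_1<-\bar c$, outside the regime in which the Nadin-type spreading result of Remark \ref{rem12} applies directly; the replacement relies on exploiting both the strict lower-solution gap and the free-boundary structure to stack ever wider compactly supported waves until the peak reaches $P(t)$.
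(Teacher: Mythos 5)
Your necessity argument is essentially the paper's and is correct. In the sufficiency direction, the overall strategy --- use the $W^{\epsilon_i}$ to force the habitat to expand on both sides in a moving frame, then conclude that $u$ converges to $P(t)$ in that frame --- is the right one and matches the paper's. However, you have a sign error in the drift of the moving-frame equation, and that error leads you into an unnecessary and incomplete detour. Setting $\tilde u(t,y):=u(t,y+c_1t)$ gives
\[
\tilde u_t=\tilde u_{yy}+\bigl(c_1-\beta(t)\bigr)\tilde u_y+f(t,\tilde u),
\]
so the drift coefficient is $k(t)=c_1-\beta(t)$ with mean $\bar k=c_1-\bar\beta$, not $-\bar\beta-c_1$. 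Since you chose $c_1\in(\bar\beta-\bar c,\ \bar r-\epsilon_2)$ and $\bar r<\bar\beta+\bar c$ by Proposition \ref{prop:tsw right}, this mean lies strictly inside $(-\bar c,\bar c)$, so Remark \ref{rem12} applies directly once you have shown (as you do, correctly) that both boundaries escape to $\mp\infty$ in the moving frame. No bootstrap with ever-wider compactly supported waves is required; moreover your bootstrap sketch does not actually close --- the claimed subsequential bound $\liminf\tilde u\geq P(t)$ is asserted rather than derived, and making it rigorous would essentially reproduce the proof of the spreading result you believed you could not invoke. The paper's proof does precisely what you should have done, except that it uses a periodic moving frame $y=x-R^{\epsilon_2}(t)-x_2$ (drift $r^{\epsilon_2}(t)-\beta(t)$, mean still in $(-\bar c,\bar c)$) and uses the second lower solution $W^{\epsilon_1}$ with $\epsilon_1<\epsilon_2$ to show that the right boundary tends to $+\infty$ in that frame.
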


\begin{proof}
Clearly the inequality \eqref{ssuso1} is a consequence of virtual spreading (see the definition in Section 2). We only
need to show that \eqref{ssuso1} is a sufficient condition for virtual spreading.

For each $i=1,2$, from Proposition \ref{prop:tw compact large}(ii) we know that $R^{\epsilon_i}(t):=
\int_0^t r^{\epsilon_i}(s) ds$ (with $r^{\epsilon_i}(t):= r(t;\beta)-\epsilon_i$) satisfies
$(R^{\epsilon_i})' (t) <  -\mu(t) W^{\epsilon_i}_x (t, R^{\epsilon_i}(t))$. Hence $W^{\epsilon_i}(t,x)$ is
a lower solution of \eqref{p}, and by \eqref{ssuso1} we have
\begin{equation}
u(t+mT ,x)> W^{\epsilon_i} (t, x-x_i) \ \ \mbox{ for }\ x\in [R^{\epsilon_i}(t)+ x_i -\ell_i ,\, R^{\epsilon_i}(t) +x_i],\ t>0,
\end{equation}
and $R^{\epsilon_i}(t) + x_i < h(t +mT)$ for $t> 0$. Define
$$
w(t,x):= u(t+mT, x +R^{\epsilon_2}(t) +x_2)\mbox{ for } G(t)\leqslant x\leqslant H(t),\ t\geqslant 0,
$$
with
$$
H(t):= h(t+mT) - R^{\epsilon_2}(t) -x_2 \mbox{ and } G(t):= g(t+mT) - R^{\epsilon_2}(t) -x_2 \mbox{ for } t\geqslant 0.
$$
Then as $t\to\infty$, $G(t)\to-\infty$,
$$
H(t) = h(t+mT) - R^{\epsilon_2}(t) -x_2 > R^{\epsilon_1}(t) +x_1 - R^{\epsilon_2}(t) -x_2 \to \infty,
$$
and $w$ satisfies
$$
w_t=w_{xx}-[\beta(t)-r^{\epsilon_2}(t)]w_x+ f(t,w), \quad t>0,\ G(t)<x<H(t).
$$
Since $\bar{\beta}<B(\tilde{\beta})$, by the definition of $B(\theta)$ and Lemmas \ref{lem:B(theta)} and
\ref{lem:property of beta*} we have $\overline{r(t;\beta)}>\bar{\beta} -\bar{c}$, and so $\overline{r^{\epsilon_2}} =
\ov{r(t;\beta)}  -\epsilon_2 > \bar{\beta} - \bar{c}$ provided $ \epsilon_2 >0$ is small. On the other hand,
we have $\overline{r^{\epsilon_2}} < \bar{\beta} +\bar{c}$ by Proposition \ref{prop:tsw right}. Therefore,
$| \bar{\beta} - \overline{r^{\epsilon_2}}| <\bar{c}$ and it follows from Remark \ref{rem12} that $w(t,x)-P(t)\to 0$ as $t\to \infty$ locally uniformly
in $x\in \R$. Namely, virtual spreading happens for $u$.
\end{proof}

\subsubsection{Proof of Theorem \ref{thm:middle beta}}\label{mddsa1}
For any given $h_0>0$ and $\phi\in \mathscr {X}(h_0)$, we write the solution $(u,g,h)$ as $(u(t,x;\sigma\phi),
g(t;\sigma\phi), h(t;\sigma\phi))$ to emphasize the dependence on the initial data $u_0=\sigma\phi$. Define
$$
\Sigma_0:=\{\sigma\geqslant 0 : \mbox{vanishing happens for } u(\cdot, \cdot; \sigma \phi)\},\  \ \sigma_*:=\sup \Sigma_0.
$$
It follows from Lemma \ref{mavan} that $\sigma\in \Sigma_0$ for all small $\sigma>0$, thus $\Sigma_0$
is nonempty. By comparison $[0,\sigma_*)\subset \Sigma_0$. If $\sigma_*=\infty$, then there
is nothing left to prove. In what follows we consider  the case $\sigma_* \in(0,\infty)$.
Using Lemma \ref{mavan} one can prove $\sigma_* \not\in \Sigma_0$ in a similar way as
proving Theorem 4.9 in \cite{GLZ}. Hence $\Sigma_0 = [0,\sigma_*)$.

On the other hand, define
$$
\Sigma_1:=\{\sigma:\ \mbox{virtual spreading happens for } u(\cdot, \cdot; \sigma\phi)\},\ \ \sigma^*:=\inf \Sigma_1.
$$
When $\Sigma_1 = \emptyset $, virtual spreading does not happen for any $\sigma\geqslant0$. Then each solution
$u(t,x;\sigma\phi)$ with $\sigma\in [\sigma_*, \infty)$  is a transition one. When $\sigma^* < \infty$, it
is easy to see from Lemma \ref{mvirtsp} and the continuous dependence of the solution on the initial data
that $\Sigma_1$ is an open set. So $\Sigma_1 = (\sigma^*,\infty)$. Each solution $u(t,x;\sigma\phi)$
with $\sigma\in [\sigma_*, \sigma^*]$  is a transition one, for which neither
virtual spreading nor vanishing happens. Moreover, for any transition solution, it follows from Lemma \ref{lcr} and Remark
\ref{rem:h finite u to 0} that $g_\infty >-\infty$, $h_\infty =\infty$ and $u(t,\cdot)\to 0$ as $t\to \infty$ locally
uniformly in $(g_\infty, \infty)$.

This completes the proof of Theorem \ref{thm:middle beta}. \hfill \qed

\begin{remark}\rm
For the homogeneous problem (that is, $\beta, f$ and $\mu$ are independent of $t$),
in the medium-sized advection case: $\beta\in (\bar{c}, B(0))$, it was shown
in \cite[Theorem 2.2]{GLZ} that transition happens for exactly one initial data, that is,
$\sigma_* =\sigma^*$, and any transition solution $u$ converges to a tadpole-like traveling
semi-wave. It has a big head on the right side and a long tail on the left side,
and moves rightward with speed $\beta -\bar{c}$. We guess that similar results
should be true for our time periodic problem \eqref{p}. We will study this problem in a forthcoming paper.
The approach should be much more complicated than the homogeneous case since we can not construct
a time periodic tadpole-like semi-wave beforehand. In homogeneous case, it was constructed
easily by using a phase plane analysis, and then it played a key role in the later approach
for proving the convergence of the transition solution to this semi-wave.
\end{remark}


\section{Asymptotic profiles of (virtual) spreading solutions}
In this section we study the asymptotic profiles for spreading or virtual spreading solutions.
As before, we write $U(t,R(t)-x;\beta -r)$ as the rightward periodic traveling semi-wave with speed $r(t;\beta)$
and $R(t):= \int_0^t r(s;\beta) ds$, write $U(t,x+L(t);-\beta-l)$ as the leftward periodic traveling
semi-wave with speed $l(t;\beta)$ and $L(t):= \int_0^t l(s;\beta) ds$.
In the first subsection we show that $|h(t)-R(t)|$ is bounded
when $0\leqslant\bar{\beta}<B(\tilde{\beta})$, and $|g(t)+L(t)|$
is bounded when $0\leqslant\bar{\beta} <\bar{c}$. Based on these results, we prove Theorem
\ref{thm:profile of spreading sol} in the second subsection.

\subsection{Boundedness for $|h(t)-R(t)|$ and $|g(t)+L(t)|$.}\label{sub51}
For convenience, we normalize the problem \eqref{p} by setting
\begin{equation}\label{bihu}
v(t,x):=\frac{u(t,x)}{P(t)},\ \ \gamma(t):=\mu(t)P(t).
\end{equation}
Then the problem \eqref{p} is converted into
\begin{equation}\label{fuzhu1}
\left\{
\begin{array}{ll}
v_t=v_{xx}-\beta(t)v_x+ F(t,v), & t >0,\ g(t)<x<h(t),\\
v(t,x)= 0,\ \
g'(t)=-\gamma(t) v_x(t,x),&  t >0,\ x= g(t),\\
v(t,x)= 0,\ \
h'(t)=- \gamma (t)v_x(t,x),&  t >0,\ x= h (t),\\
v(0,x)=u_0(x) /P(0), &  -h_0\leqslant x\leqslant h_0,
\end{array}
\right.
\end{equation}
where the new nonlinearity $F(t,v):=\frac{1}{P(t)}[f(t, P(t)v)-f(t, P(t))v]$ satisfies
$$
\left\{
 \begin{array}{l}
 F(t,v)\in C^{\nu/2, 1+\nu/2} ([0,T]\times \R) \mbox{ for some } \nu\in (0,1),\ T\mbox{-periodic in } t,\
 F(t,0)\equiv F(t,1)\equiv 0, \\
  \mbox{for any } t\in [0,T],\ F(t,v)>0 \mbox{ for } 0<v<1,\ \
  F(t,v)<0 \mbox{ for } v>1,  \\
  F(t,v)/v \mbox{ is decreasing in } v >0,\\
  a_1(t) := F_v(t,0) =a(t) - f(t,P(t))/P(t),\ \ \alpha_1(t):= F_v(t,1)= \alpha(t)/P(t).
  \end{array}
 \right. \ \hskip 10mm \hfill
$$
Clearly, $\ov{a_1} = \bar{a}$ and, by the condition $\HHH$, $\alpha_1(t) <-2 \delta$ for some $\delta>0$.
The latter inequality implies that, for some small $\varepsilon >0$, there holds
\begin{equation}\label{fuzhu3}
F_v(t,v) \leqslant -\delta \    \mbox{ for }\ t\in[0,T],\ v\in[1-\varepsilon ,1+\varepsilon ].
\end{equation}

We first give a rough estimate for $g$ and $h$, and show that $u(t,\cdot) \to P(t)$
in the interior of the ``main" habitat of $u$.

\begin{lem}\label{lem:rough gh}
\begin{itemize}
\item[(i)]
Assume that $0\leqslant \bar{\beta} <\bar{c}$ and that spreading happens for $(u,g,h)$. Then for any
constants $c_1, c_2$ satisfying $-\bar{l} < -c_1 < 0 < c_2 <\bar{r}$, there exists a constant $K_1>0$ such that
\begin{equation}\label{u to P(t)}
g(t) < -c_1 t,\quad c_2 t <h(t),\quad \|u(t,\cdot)-P(t)\|_{L^\infty ([-c_1 t, c_2 t])} \leqslant K_1 e^{-\delta t} \mbox{ for } t\gg 1.
\end{equation}

\item[(ii)] Assume that $\bar{c} \leqslant \bar{\beta} <B(\tilde{\beta}) $ and that virtual spreading happens for $(u,g,h)$.
Then for any constants $c_3, c_4$ satisfying $\bar{\beta} -\bar{c} < c_3 < c_4 <\bar{r}$, there exists a constant $K_2>0$ such that
\begin{equation}\label{u to P(t) virtual}
c_4 t <h(t),\quad \|u(t,\cdot)-P(t)\|_{L^\infty ([c_3 t, c_4 t])} \leqslant K_2 e^{-\delta t} \mbox{ for } t\gg 1.
\end{equation}
\end{itemize}
\end{lem}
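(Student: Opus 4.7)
I would prove the lemma in two independent parts: the free-boundary estimates and the interior exponential convergence.

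For the bound $c_2 t < h(t)$ in both parts, I would apply the compactly supported lower solution $W^\epsilon$ from Proposition~\ref{prop:tw compact large}(ii) with $\epsilon > 0$ so small that $\bar r - \epsilon > c_2$. Because spreading (part (i)) or virtual spreading (part (ii)) is assumed, an argument parallel to Lemma~\ref{mvirtsp} produces a time $T_0$ and a shift $x_0$ such that $u(T_0, \cdot) \geq W^\epsilon(0, \cdot - x_0)$ on the support of the latter. The free-boundary comparison principle then propagates this inequality, giving $h(t + T_0) \geq x_0 + R^\epsilon(t) = (\bar r - \epsilon)t + O(1)$, which exceeds $c_2 t$ for large $t$. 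The bound $g(t) < -c_1 t$ in part (i) is obtained symmetrically via the leftward $W_l^\epsilon$ from Proposition~\ref{prop:tw compact large}(iii), which is available because $\bar\beta < \bar c$.

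For the interior estimate I pass to $v := u/P$, satisfying $v_t = v_{xx} - \beta(t) v_x + F(t, v)$ with $F(t, 1) = 0$ and $F_v(t, v) \leq -\delta$ on $[1 - \varepsilon, 1 + \varepsilon]$ by \eqref{fuzhu3}. Denote the target interval by $[c_- t, c_+ t]$ (so $c_- = -c_1$, $c_+ = c_2$ in part (i); $c_- = c_3$, $c_+ = c_4$ in part (ii)), and let $\tilde c_\pm$ be slightly enlarged speeds with $\tilde c_- < c_-$, $\tilde c_+ > c_+$, chosen to remain strictly inside the Cauchy spreading cone $\{(\bar\beta - \bar c)t < x < (\bar\beta + \bar c)t\}$ (possible by Propositions~\ref{prop:tsw right} and \ref{prop:tsw left}, which give $\bar r < \bar c + \bar\beta$ and, in part (i), $\bar l < \bar c - \bar\beta$). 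The plan has two stages. \emph{Stage A:} for every fixed $\eta \in (0, \varepsilon/3)$ I would show $\|v(t, \cdot) - 1\|_{L^\infty([\tilde c_- t, \tilde c_+ t])} \leq \eta$ for $t \geq T_\eta$. The upper bound $v \leq 1 + \eta$ follows from $u \leq w$, where $w$ solves the Cauchy problem on $\mathbb R$ with the same initial data; standard Fisher-KPP spreading theory for time-periodic advective media (\cite{Nad}) forces $w \to P$ uniformly on the enlarged interval. The lower bound $v \geq 1 - \eta$ is produced by patching together rightward (and, in part (i), leftward) lower solutions $W^\epsilon$ with support width $\ell$ large enough that by Lemma~\ref{lem:positive sol DD0} the profile is within $\eta P_0$ of $P(t)$ on a prescribed central window of the support, re-initialised at advancing times so that the windows collectively cover $[\tilde c_- t, \tilde c_+ t]$. \emph{Stage B:} define
\begin{equation*}
\bar v(t, x) := 1 + K \Bigl[ e^{-\delta(t - T_\eta)} + e^{\lambda(x - \tilde c_+ t)} + e^{-\lambda(x - \tilde c_- t)} \Bigr]
\end{equation*}
with $K := \varepsilon/3$ (so $|\bar v - 1| \leq \varepsilon$ keeps $F(t, \bar v) \leq -\delta(\bar v - 1)$ valid) and $\lambda > 0$ so small that both $\delta + \lambda(\beta(t) - \tilde c_+) - \lambda^2 \geq 0$ and $\delta + \lambda(\tilde c_- - \beta(t)) - \lambda^2 \geq 0$ hold uniformly in $t$; a direct computation using $F(t, 1) = 0$ and $F_v \leq -\delta$ shows $\bar v$ is a supersolution of the normalised equation. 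On the parabolic boundary of $\{t \geq T_\eta,\ \tilde c_- t \leq x \leq \tilde c_+ t\}$ at least one spatial exponential equals $1$, so $\bar v \geq 1 + K \geq 1 + \eta \geq v$ by Stage A, and comparison yields $v \leq \bar v$ throughout this region. On the smaller target interval $[c_- t, c_+ t]$ each spatial exponential is bounded by $K e^{-\delta t}$ provided $\lambda$ is additionally required to satisfy $\lambda \min(\tilde c_+ - c_+,\, c_- - \tilde c_-) \geq \delta$. The symmetric subsolution $\underline v$ is built the same way, providing the matching lower bound and hence the desired estimate with $K_1 = 3 K P^0 e^{\delta T_\eta}$.

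The main obstacle is the lower bound in Stage A: a single $W^\epsilon$ of fixed width $\ell$ controls $v$ from below only on a window moving at constant speed $\bar r - \epsilon$, whereas $[\tilde c_- t, \tilde c_+ t]$ grows linearly. Bridging this gap demands careful bookkeeping of re-initialised lower solutions, or equivalently a parabolic Harnack-type propagation of the locally uniform convergence $u \to P$ from Lemma~\ref{lemuto1} (or its virtual-spreading analogue) throughout the linearly expanding interior of $[g(t), h(t)]$. Once Stage A is in hand, the barrier computation of Stage B is essentially algebraic: the three competing requirements---supersolution property, linearisation window $|\bar v - 1| \leq \varepsilon$, and spatial decay matching the temporal rate $\delta$ on the inner interval---are mutually compatible, and they dictate the order $K \to \lambda \to \tilde c_\pm$ in which the parameters must be selected.
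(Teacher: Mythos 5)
Your treatment of the two boundary inequalities (placing $W^\epsilon$ and $W^\epsilon_l$ below $u$ and invoking comparison) is exactly the paper's approach, and it is correct. For the interior exponential estimate, however, the paper simply invokes the argument of \cite[Lemma 6.5]{DuLou} applied to the normalised solution $v=u/P$, using the same compactly supported lower solutions to anchor the relaxation; you instead try to make the convergence explicit via the barrier $\bar v = 1 + K\big[e^{-\delta(t-T_\eta)}+e^{\lambda(x-\tilde c_+ t)}+e^{-\lambda(x-\tilde c_- t)}\big]$. That is a genuinely different route, but as written it has two problems.

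First, your Stage~B parameter constraints are \emph{not} mutually compatible in general. The supersolution inequality requires $\lambda^2+\lambda(\tilde c_+ -\beta(t))\leqslant\delta$ for every $t$, while your spatial-decay requirement forces $\lambda(\tilde c_+ -c_+)\geqslant\delta$. Chaining these gives $\lambda^2+\lambda(\tilde c_+ -\beta(t))\leqslant\lambda(\tilde c_+ -c_+)$, i.e.\ $\beta(t)\geqslant c_+ +\lambda$ for all $t$. In part (i) one has $c_+=c_2>0$ while $\beta$ only satisfies $\bar\beta\geqslant 0$ (and may well vanish or dip negative on a subinterval), so the inequality $\beta(t)\geqslant c_2+\lambda$ fails; the same obstruction appears at the left edge with $c_-$. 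Your claim that ``the three competing requirements\dots are mutually compatible'' is therefore false. The construction can be salvaged by decoupling the temporal rate from $\delta$: choose $\lambda$ small first (so the supersolution inequality holds), then accept a decay rate $\delta'':=\min\{\delta,\lambda(\tilde c_+-c_+),\lambda(c_- -\tilde c_-)\}<\delta$ on $[c_-t,c_+t]$. That gives a valid exponential estimate, but at a slower rate than the $\delta$ appearing in \eqref{fuzhu3}, and you should say so explicitly rather than assert compatibility. (This weaker rate would in fact still suffice for the use made of Lemma~\ref{lem:rough gh} in Proposition~\ref{pro:sigma01}, but that is a separate verification you would need to record.)

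Second, your Stage~A lower bound is the real crux and you leave it as an acknowledged gap. A single copy of $W^\epsilon$ of fixed width $\ell$ controls $v$ from below only in a window moving at one speed, whereas the target interval $[\tilde c_- t,\tilde c_+ t]$ expands linearly; re-initialising copies of $W^\epsilon$ presupposes that $u$ is already large on the new seeding interval, which is circular. The standard resolution is to use a one-parameter family of compactly supported traveling waves $U_0(t,\,\cdot\,;\beta-c,\ell)$ indexed by a continuum of speeds $c$ filling $[\tilde c_-,\tilde c_+]$ (Lemma~\ref{lem:positive sol DD0} and Proposition~\ref{prop:tw compact large}(i) make this available since $|\bar\beta-c|<\bar c$ there), each one seeded at an early time from the locally uniform convergence $u\to P$, so that their interiors jointly sweep the expanding cone. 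This is essentially the mechanism inside the Du--Lou argument the paper cites. Without carrying out that bookkeeping, Stage~A is not established, and the subsequent barrier comparison has no boundary data to compare against.
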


\begin{proof}
(i) For any $\epsilon>0$ satisfying $c_1 <\bar{l} -\epsilon$ and $c_2<\bar{r}-\epsilon$, when
it is sufficiently small, by Proposition \ref{prop:tw compact large} (ii) and (iii), the function
$W^{\epsilon} (t,x)$ is a compactly supported traveling wave which travels rightward with
speed $r^{\epsilon}(t):= r(t;\beta) -\epsilon$, and the function $W^{\epsilon}_l(t,x)$
is a compactly supported traveling wave which travels leftward with
speed $l^{\epsilon}(t):= l(t;\beta) -\epsilon$.
Since  spreading happens for the solution $u$, there is a large integer $m$ such that
both $W^{\epsilon}(0,x)$ and $W^{\epsilon}_l (0,x)$ lie below $u(mT, x)$. Hence $W^{\epsilon}(t,x)$ and $W^{\epsilon}_l (t,x)$
lie below $u(mT+t, x)$ for all $t>0$ since they are lower solutions of \eqref{p}. So
$$
g(t+mT) < - \int_0^t l^\epsilon (s) ds +O(1) ,\quad   \int_0^t r^\epsilon (s) ds +O(1) < h(t +mT),\quad t\geqslant 0.
$$
The first two inequalities in \eqref{u to P(t)} then follows.

By \eqref{fuzhu3}, one can use the lower solutions $W^{\epsilon}(t,x)$ and $W^{\epsilon}_l (t,x)$ and
use the same argument for the normalized function $v$ as in the proof of \cite[Lemma 6.5]{DuLou} to show that
$$
|v(t,x) - 1| \leqslant k_1 e^{-\delta t} \mbox{ for } x\in [-c_1 t, c_2 t],\ t\gg 1,
$$
where $k_1>0$ is a constant. This reduces to the third inequality in \eqref{u to P(t)}.

(ii) Choose $\epsilon_1 \in (0,\bar{c})$ small such that $c_5:= \bar{\beta} -\bar{c} + \epsilon_1 <
c_3 < c_4 < \bar{r} -\epsilon_1 $. Then $\bar{\beta} -c_5 = \bar{c} -\epsilon_1$ and
$\bar{\beta} -\bar{r} +\epsilon_1  <\bar{c} -\epsilon_1$. On the other hand, by Proposition \ref{prop:tsw right}
we have $0<\bar{r}<\bar{\beta} +\bar{c}$, and so $\bar{\beta} -\bar{r} +\epsilon_1  > -\bar{c} +\epsilon_1$.
Thus both $U_0(t,z; \beta -c_5,\ell)$ and $U_0(t, z; \beta - r(t;\beta) +\epsilon_1, \ell)$ exist
when $\ell$ is large. Define
$$
W_1 (t,x):= U_0(t,c_5 t -x; \beta -c_5,\ell) \mbox{ and } W_2 (t,x):= U_0(t, R(t) -
\epsilon_1 t -x; \beta - r(t;\beta) +\epsilon_1, \ell).
$$
Then they are compactly supported traveling waves of \eqref{p}$_1$. $W_2$ moves rightward with average
speed $\bar{r}-\epsilon_1$ and $W_1$ moves rightward with constant speed $c_5$.
Since virtual spreading happens for $u$, both $W_1(0,x)$ and $W_2(0,x)$ can lie below
$u(m'T, x+x'')$ for some large integer $m'$ and some suitable shift $x''$. By comparison
they stay below $u$ since they are lower solutions of the problem \eqref{p}.
In fact, at the right end
point $x= R(t) -\epsilon_1 t$ of $W_2$ we have $ (R(t)-\epsilon_1 t)' = r(t;\beta)-\epsilon_1 <
-\mu(t) W_{2x} (t,R(t) - \epsilon_1 t) = A_0[\beta - r+\epsilon_1, \ell]$ by Proposition \ref{prop:tw compact large}(ii).
So $W_2$ is a lower solution and $R(t) -\epsilon_1 t < h(t+m' T)$ for $t>0$.
Hence $c_4 t<h(t)$ for $t\gg 1$. For the function $W_1$, though it may not satisfy the free boundary
condition on its right end point $x= c_5 t$, it is still a lower solution since it does not touch $u$ at this
end point at all (it moves rightward behind $W_2$).
Now we can use these lower solutions to support the spreading of $u$ and using the same argument
for the normalized function $v$ as in the proof of \cite[Lemma 6.5]{DuLou} to show that
$$
|v(t,x) - 1| \leqslant k_2 e^{-\delta t} \mbox{ for } x\in [c_3 t, c_4 t],\ t\gg 1,
$$
where $k_2>0$ is a constant. This reduces to the second inequality in \eqref{u to P(t) virtual}.
\end{proof}

Next we prove the boundedness of $h(t)-R(t)$ and show that $u(t,\cdot) \approx P(t)$ in the domain
$[c_l t, h(t)-X]$, where $X>0$ is a large number and, for any given small $\epsilon_0 >0$, $c_l$ denotes a
speed defined by
\begin{equation}\label{def:cl}
c_l :=
\left\{
\begin{array}{ll}
  0, & \mbox{when } 0\leqslant \bar{\beta} <\bar{c} \mbox{ and spreading happens},\\
  \bar{\beta} -\bar{c}+\epsilon_0, & \mbox{when } \bar{c}\leqslant \bar{\beta} <B(\tilde{\beta}) \mbox{ and virtual spreading happens}.
\end{array}
\right.
\end{equation}

\begin{prop}\label{pro:sigma01}
Assume that $\HH$, $\HHH$ and $\beta\in\PP$ satisfies $0\leqslant\bar{\beta}<B(\tilde{\beta})$.
Assume further that spreading or virtual spreading happens for the solution $(u,g,h)$. Then
\begin{itemize}
\item[(i)] there exists $C>0$ such that
\begin{equation}\label{hh1}
|h(t)-R(t) |\leqslant C \ \ \mbox{for all } t\geqslant0 ;
\end{equation}

\item[(ii)] for any small $\epsilon, \epsilon_0>0$, let $c_l$ be the number defined by \eqref{def:cl}, then
there exists $X_\epsilon >0$ and $T_\epsilon >0$ such that
\begin{equation}\label{u to P near 0}
\|u(t,\cdot ) - P(t) \|_{L^\infty ([c_l t, h(t) -X_\epsilon])} \leqslant \epsilon \ \mbox{ when } t> T_\epsilon.
\end{equation}
\end{itemize}
\end{prop}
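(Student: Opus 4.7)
The plan is to sandwich $u$ between two translates of the rightward periodic traveling semi-wave $U(t,R(t)-x;\beta-r)$ of Proposition~\ref{prop:tsw right} in a neighbourhood of the right free boundary $h(t)$, and then fill in the interior behaviour using Lemma~\ref{lem:rough gh}. For the upper half of part~(i), I would choose $X_1>0$ so large that $U(0,X_1-x;\beta-r)\geqslant u_0(x)$ on $[-h_0,h_0]$ (possible since $U(0,z;\beta-r)\to P(0)$ as $z\to\infty$), set $\bar h(t):=R(t)+X_1$ and $\bar u(t,x):=U(t,\bar h(t)-x;\beta-r)$, and verify that $\bar u$ solves \eqref{p}$_1$ on $\{x<\bar h(t)\}$, vanishes at $x=\bar h(t)$, and $\bar h'(t)=r(t;\beta)=-\mu(t)\bar u_x(t,\bar h(t))$ by the construction of $r$. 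The comparison principle for free-boundary problems (\cite{DuLin,DuLou}) then gives $h(t)\leqslant R(t)+X_1$.

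The lower half $h(t)\geqslant R(t)-C_2$ is the main obstacle. A first, crude step uses the compactly supported lower solution $W^{\epsilon_1}$ of Proposition~\ref{prop:tw compact large}(ii) (whose right end moves at average speed $\bar r-\epsilon_1$): (virtual) spreading together with Lemma~\ref{lem:rough gh} places a shift of $W^{\epsilon_1}$ below $u$ at some time $mT$, which gives only $h(t)\geqslant R(t)-\epsilon_1 t-O(1)$. To upgrade this to a uniform $O(1)$ bound I plan to argue by contradiction and compactness. Suppose $h(t_n)-R(t_n)\to-\infty$ along $t_n\to\infty$; translating by $(t_n,h(t_n))$ and applying parabolic interior estimates, extract a subsequential limit $(u_\infty,h_\infty)$ that is an entire solution of \eqref{p}$_1$ on $\{x<h_\infty(t)\}$ with the Stefan condition on $h_\infty$ and, from Lemma~\ref{lem:rough gh}, $u_\infty(t,x)\to P(t)$ as $x\to-\infty$ locally uniformly in $t$. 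The uniqueness argument of Proposition~\ref{prop:tsw right} (proved via Lemma~\ref{lem subso} and relying on the Fisher--KPP structure of~\HH\ and the non-degeneracy~\HHH) then forces $u_\infty(t,h_\infty(t)-z)\equiv U(t,z;\beta-r)$ after a phase shift in $t$, so $h_\infty'(t)\equiv r(t;\beta)$ and $h_\infty(t)-R(t_n+t)$ is bounded along the subsequence, contradicting $h(t_n)-R(t_n)\to-\infty$.

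For part~(ii), with part~(i) in hand, fix $c_4\in(c_l,\bar r)$ close to $\bar r$. Lemma~\ref{lem:rough gh} gives $\|u(t,\cdot)-P(t)\|_{L^\infty([c_lt,c_4t])}\leqslant \epsilon/2$ for $t\geqslant T_\epsilon$. On the remaining strip $[c_4t,h(t)-X_\epsilon]$, which is non-empty for large $t$ by~(i), the upper envelope $\bar u$ from the upper-bound argument yields $u(t,x)\leqslant U(t,R(t)+X_1-x;\beta-r)\leqslant P(t)$, while a symmetric lower envelope built from a slightly shifted $U_0(\cdot;\beta-r,\ell)$ (via Proposition~\ref{prop:tw compact large}(ii), the lower bound of~(i) and Lemma~\ref{lem:A0 ell to A}) supplies the matching estimate $u(t,x)\geqslant P(t)-\epsilon/2$. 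Since $U(t,z;\beta-r)\to P(t)$ as $z\to\infty$ uniformly in $t$, both envelopes lie within $\epsilon/2$ of $P(t)$ once $X_\epsilon$ is chosen large, giving~(ii). The main technical difficulty is the compactness-plus-uniqueness step in the lower bound of~(i): the linear-in-$t$ defect left by the compactly supported lower solutions must be removed by a rigidity argument that uses the full strength of Proposition~\ref{prop:tsw right} and hypothesis~\HHH.
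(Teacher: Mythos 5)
Your upper bound argument has a concrete gap. You propose to place the pure translate $U(0,X_1-x;\beta-r)$ above $u_0$ on $[-h_0,h_0]$, and this is possible only if $\sup u_0 < \sup_t P(t)$, since $U(t,z;\beta-r)<P(t)$ for every finite $z$. Nothing in $\mathscr{X}(h_0)$ forbids $\|u_0\|_\infty$ from exceeding $P^0:=\max_t P(t)$, and indeed the solution $u(t,\cdot)$ may approach $P(t)$ from above. The paper avoids this by first deriving the one--sided estimate $v(t,x)\leqslant \eta(t)\leqslant 1+\varepsilon e^{\delta mT}e^{-\delta t}$ for the normalized $v=u/P$, and then building the upper barrier as a \emph{multiplicatively inflated} semi-wave $v^+=(1+M'e^{-\delta t})V(t,h^+(t)-x)$. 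The inflation factor is what lets the barrier dominate a solution that can exceed $P(t)$, and the decay of that factor is exactly where hypothesis $\HHH$ (through \eqref{fuzhu3}) is used. Your construction uses $\HHH$ nowhere, which is a further signal that something is missing.

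The lower bound by compactness and rigidity is a genuinely different route from the paper (the paper again builds an explicit lower barrier $v^-=(1-K_1e^{-\delta t})V(t,h^-(t)-x)$), but as stated it leaves two gaps. First, Proposition~\ref{prop:tsw right} gives uniqueness only within the class of $T$-periodic traveling semi-waves; what you need for the rigidity step is a Liouville-type theorem classifying \emph{all} entire solutions of the free boundary problem on $\{x<h_\infty(t)\}$ with $u_\infty(t,x)\to P(t)$ as $x\to-\infty$, which does not follow from the proposition and is not proved in the paper. Second, even granting $h_\infty'(t)\equiv r(t+\tau;\beta)$ after a phase shift, the limit only controls the shape of $h$ near $t_n$, i.e.\ $h(t_n+s)-h(t_n)\to R(s+\tau)-R(\tau)$ for $s$ in compact sets; it says nothing about the offset $h(t_n)-R(t_n)$ and thus does not directly contradict $h(t_n)-R(t_n)\to-\infty$. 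To close this you would need an extra ingredient relating the offset to the limit profile. In short, the sandwich idea and the overall structure are the right instinct, but both the upper barrier needs the inflation factor (and $\HHH$) and the lower bound needs either the paper's explicit deflated barrier or a genuinely proved Liouville/rigidity statement plus a sharper contradiction argument.
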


\begin{proof}
As above we normalize the periodic rightward traveling semi-wave $U(t,z; \beta-r)$ by
$$
V (t,z):=\frac{U(t,z; \beta -r)}{P(t)}, \ \
$$
Then the problem \eqref{p-sw-k} is converted into
\begin{equation}\label{fuzhu2}
\left\{
\begin{array}{ll}
V_t=V_{zz}+[\beta(t)-r(t;\beta)]V_z+ F(t,V), & t\in[0,T],\ z>0,\\
V(t,0)= 0,\ \ V(t,\infty)=1, &  t\in[0,T],\\
V(0,z)=V(T,z),\ \ V_z(t,z)>0 &  t\in [0,T], \ z>0,\\
r(t;\beta)=\gamma(t)V_z(t,0),  & t\in[0,T],\ z>0,\\
\end{array}
\right.
\end{equation}
where $\gamma$ and $F$ are the same as those in \eqref{bihu} and \eqref{fuzhu1}.
\smallskip

{\bf Step 1}. To give some upper bounds for $h(t)$ and $v(t,x)$.

First, we give a simple upper bound for $v(t,x)$. Let $\eta(t)$ be the solution of
$\eta_t=F(t,\eta)$ with initial value $\eta(0)=M/P(0) +1$, where $M$ is given by \eqref{def:M}.
Due to $F(t,v)<0$ for $v>1$, the function $\eta(t)$ decreases to $1$ as $t\to \infty$. Hence,
for $\varepsilon>0$ in \eqref{fuzhu3}, there exists a large integer $m $ such that,
$1<\eta(t)<1+\varepsilon $ when $t \geqslant m T$.
By \eqref{fuzhu3} we have $\eta_t = F(t,\eta) \leqslant \delta (1-\eta)$ for $t\geqslant m T$.
So, $\eta (t)\leqslant 1+ \varepsilon  e^{\delta m T }  e^{-\delta t}$ for $t \geqslant m T$.
Clearly $\eta(t)$ is an upper solution of \eqref{fuzhu1}, and so
\begin{equation}\label{Vtoto1}
v(t,x) \leqslant \eta (t)\leqslant 1+ \varepsilon  e^{\delta m T}  e^{-\delta t} \ \mbox{ for }
g(t)\leqslant x\leqslant h(t),\ t \geqslant m T.
\end{equation}
Take an integer $m' >m$ such that $  e^{\delta (m -m') T }< 1 /2$.
Since $V(t,\infty)= 1$, we can find $X >0$ such that, with $T'  := m' T$ and $M' := 2 \varepsilon  e^{\delta mT}$,
\begin{equation}\label{U1a1}
(1+M'e^{-\delta T'  })V(t,z )\geqslant 1+ \varepsilon  e^{\delta mT} e^{-\delta T' }
\ \mbox{ for all } t\in[0,T],\ z\geqslant X.
\end{equation}

Now we construct another finer upper solution $(v^+ ,g, h^+ )$ to \eqref{fuzhu1} as follows.
\[
h^+ (t): =\int_{T' }^t r(s;\beta)ds+ h (T' )+ K M'(e^{-\delta T' }-e^{-\delta t})+X \ \
\mbox{ for } t\geqslant T' ,
\]
\[
v^+ (t,x):=(1+M'e^{-\delta t})V(t,h^+ (t)-x)\  \ \mbox{ for } t\geqslant T' ,\ x\leqslant h^+ (t),
\]
where $K$ is a positive constant to be determined below.
Clearly, for all $t\geqslant T' $, $v^+ (t, g(t))>0= v(t, g(t))$,
$v^+ (t, h^+ (t))=0$, and
\begin{eqnarray*}
-\gamma(t)v^+ _x(t,h^+ (t))& = & \gamma (t)(1+M'e^{-\delta t})V_z(t,0)=(1+M'e^{-\delta t})r(t;\beta), \\
& < & r(t;\beta)+M' K \delta e^{-\delta t} = (h^+) '(t),
\end{eqnarray*}
if we choose $K$ with $K\delta > \max_{t\in [0,T]}r(t;\beta) $.  By the definition of $h^+ $ we
have $h (T' )<h^+ (T' )$. By \eqref{Vtoto1} and \eqref{U1a1} we have
\begin{eqnarray*}
v^+ (T' ,x) &  = & (1+M'e^{-\delta T' })V(T',  h (T' )+X -x) \geqslant
(1+M'e^{-\delta T' })V(T',X ) \\
& \geqslant & 1+ \varepsilon   e^{\delta mT} e^{ - \delta T' }\geqslant v(T' ,x),
\quad x\in[ g (T' ),h (T' )].
\end{eqnarray*}
We now show that
\begin{equation}\label{u+ upper}
 \mathcal{N} v^+ := v^+_t - v^+_{xx} + \beta(t)v^+_x -F(t,v^+) \geqslant 0,\quad x\in [g(t), h^+(t)],\ t>T' .
\end{equation}
In fact, by  a direct calculation we have
\begin{eqnarray*}
\mathcal{N} v^+ & = & -\delta M'e^{-\delta t}V+(1+M'e^{-\delta t}) \{KM'\delta e^{-\delta t}V_z+F(t,V)\} -F(t,(1+M'e^{-\delta t})V)\\
 & = & M'e^{-\delta t}\Big\{F(t,V)+K \delta (1+M'e^{-\delta t})V_z -\delta V\Big\} + F(t,V)- F(t,(1+M'e^{-\delta t})V)\\
 & = & \mathcal{F}:= M'e^{-\delta t}\Big\{F(t,V)+K \delta (1+M'e^{-\delta t})V_z-[F_v(t,(1+ \rho M'e^{-\delta t})V)+\delta]V\Big\}.
\end{eqnarray*}
for some $\rho \in (0,1)$.  Since $V(t,z)\to 1$ as $z\to \infty$ for all $t\in[0,T]$, there is $z_0>0$ such that
\[
V(t,z)\geqslant 1-\varepsilon \ \ \mbox{ for } (t,z)\in[0,\infty)\times(z_0,\infty).
\]
When $h^+ (t)-x>z_0$ and $t> T' $, we have $\mathcal{F}>0$ by \eqref{fuzhu3} and the fact that $M'e^{-\delta t}\leqslant\varepsilon $ for $t> T' $.
When  $0\leqslant h^+ (t)-x\leqslant z_0$ and $t> T' $, we have
$$
\mathcal{F}  \geqslant  M'e^{-\delta t}(K \delta D_1 - D_2 -\delta)\geqslant 0,\quad \mbox{ provided } K>0 \mbox{ is sufficiently large},
$$
where
$$
D_1:=\min_{(t,z)\in[0,T]\times [0,z_0]}V_z(t,z)>0 \mbox{ and } D_2:=\max_{(t,s)\in[0,T] \times[0,1+M']}F_v(t, s).
$$

Summarizing the above results we see that $(v^+, g, h^+)$ is an upper solution of \eqref{fuzhu1}.  By the comparison principle we have
$$
h(t) \leqslant h^+(t) \mbox{ for } t>T'\quad \mbox{and} \quad  v(t,x)\leqslant v^+(t,x) \leqslant 1+M' e^{-\delta t},\quad x\in [g(t), h(t)],\ t>T'.
$$
By the definition of $h^+$ we see that, for $H_r := h(T' )+X +KM'$, we have
\begin{equation}\label{hbd}
h(t)< R(t) +H_r \ \mbox{ for all } t\geqslant 0.
\end{equation}
For any $\epsilon>0$ and for $P^0 := \max_{t\in [0,T]} P(t)$, if we choose $T_1(\epsilon) >T'$ large such that
$P^0 M' e^{-\delta T_1(\epsilon)} < \epsilon$, then by the definition of $v^+$ we have
\begin{equation}\label{v<1+epsilon/P}
v(t,x)\leqslant v^+(t,x) \leqslant 1 +  \epsilon/P^0 ,\quad x\in [g(t), h(t)],\ t> T_1(\epsilon),
\end{equation}

\smallskip

{\bf Step 2}. To give some lower bounds for $h$ and $v(t,x)$.

For the number $\epsilon_0>0$ given in the assumption, define $c_l$ by \eqref{def:cl} and define $c_r:= \bar{r} -\epsilon_0$.
For the constant $\delta$ given in \eqref{fuzhu3}, it follows from Lemma \ref{lem:rough gh} that there
exist $K_1 >0$ and an integer $m'' >0$ such that
$$
c_rt\leqslant h(t),\ \quad \|v(t,\cdot)-1\|_{L^\infty ([c_l t, c_r t])} \leqslant K_1 e^{-\delta t},\quad t\geqslant T'' := m'' T.
$$
Define
\[
g^- (t) := c_l t ,\quad  h^- (t) :=\int_{T''}^t r(s;\beta)ds - K_2 K_1 (e^{-\delta  T''}-e^{-\delta  t}) + c_r T''\ \mbox{ for } t\geqslant T'',
\]
\[
v^- (t,x) :=(1- K_1  e^{-\delta  t})V(t, h^- (t)-x)\ \mbox{ for } x\in[g^- (t),h^- (t)],\ t\geqslant T''.
\]
Then for a suitable constant $K_2>0$, a similar argument as in Step 1 shows that $(v^- ,g^- , h^- )$ is a lower solution.
By the comparison principle we have
\begin{equation}\label{h>h->R}
h(t) \geqslant h^-(t)  \mbox{ for } t >T'',
\quad v(t,x)\geqslant v^- (t,x) \mbox{ for } x\in [g^- (t), h^-(t)],\ t>T''.
\end{equation}
Hence
\begin{equation}\label{hbd2}
h(t)\geqslant h^- (t) - \max_{t\in[0,T'']}|h(t)-h^- (t)| \geqslant R(t) -H_l \ \ \mbox{ for all } t\geqslant 0,
\end{equation}
where $H_l = \max_{t\in[0,T'']}|h(t)-h^- (t)|+\bar{r}T'' +K_2 K_1$.
Combining with \eqref{hbd} we obtain \eqref{hh1}.

On the other hand, for any $\epsilon>0$, since $V(t,\infty) =1$, there exists $X_1(\epsilon)>0$ such that
$$
V(t,z)> 1- \epsilon/(2P^0) \mbox{  for } t\in [0,T],\ z\geqslant X_1(\epsilon).
$$
For $(t,x)\in \Omega_1 := \{ (t,x) : g^-(t)\leqslant x\leqslant h(t) -H_r -H_l -X_1(\epsilon),\ t>T''\}$, by
\eqref{hbd2} and \eqref{hbd} we have
$$
h^-(t) -x \geqslant R(t) -H_l -x \geqslant h(t) - H_r -H_l  -x \geqslant X_1(\epsilon),
$$
and so,
$$
v(t,x) \geqslant v^- (t,x) \geqslant (1-K_1 e^{-\delta t} ) V(t , X_1(\epsilon)) \geqslant
(1-K_1 e^{-\delta t} ) \Big( 1- \frac{\epsilon}{2P^0} \Big) \mbox{ for } (t,x)\in \Omega_1.
$$
Moreover, if we choose $T_2(\epsilon) >T''$ such that $2 P^0 K_1 e^{-\delta T_2(\epsilon)} <\epsilon$, then
\begin{equation}\label{v>1-epsilonP}
v(t,x)\geqslant \Big( 1- \frac{\epsilon}{2P^0}\Big)^2 > 1- \frac{\epsilon}{P^0} \mbox{ for }
(t,x)\in \Omega_1 \mbox{ and } t>T_2(\epsilon).
\end{equation}

\smallskip

{\bf Step 3}. To complete the proof of \eqref{u to P near 0}.
Denote $T_\epsilon := \max\{T_1(\epsilon), T_2(\epsilon)\}$ and $X_\epsilon := H_r + H_l +X_1(\epsilon)$, then
by \eqref{v<1+epsilon/P} and \eqref{v>1-epsilonP} we have
$$
|v(t,x)-1| \leqslant \frac{\epsilon}{P^0} \mbox{ for } c_l t \leqslant x\leqslant h(t) -X_\epsilon,\ t>T_\epsilon.
$$
This yields the estimate in \eqref{u to P near 0}.
\end{proof}

\begin{remark}\rm\label{rem:est h all beta}
We remark that the estimate \eqref{hbd} in Step 1 remains true even for large advection problems
(that is, even if $\bar{\beta}\geqslant B(\tilde{\beta})$). In fact, in the proof in Step 1 we
only use $V$ or $U$ (which exists for all $\beta\in \PP$ satisfying $\bar{\beta}\geqslant 0$ by Proposition
\ref{prop:tsw right}) to construct upper solutions. The estimate \eqref{hbd2}, however, is not true
for large advection case. The proof in Step 2 is also invalid since $c_l = \bar{\beta} -\bar{c} + \epsilon_0
> c_r = \bar{r} -\epsilon _0$ when $\bar{\beta}> B(\tilde{\beta})$ and $\epsilon_0 >0$.
\end{remark}

Using a similar argument as above we can obtain the following result.
\begin{prop}\label{pro:sigma12}
Assume that $\HH$, $\HHH$ and $\beta\in\PP$ satisfies $0\leqslant\bar{\beta}< \bar{c}$.
Assume further that spreading happens for the solution $(u,g,h)$. Then
\begin{itemize}
\item[(i)] there exists $C_1 >0$ such that
\begin{equation}\label{gh1}
|g(t) + L(t) |\leqslant C_1 \ \ \mbox{for all } t\geqslant0 ;
\end{equation}

\item[(ii)] for any small $\epsilon >0$, there exists $X'_\epsilon >0$ and $T'_\epsilon >0$ such that
\begin{equation}\label{u to P near 0 left}
\|u(t,\cdot ) - P(t) \|_{L^\infty ([g(t) + X'_\epsilon, 0])} \leqslant \epsilon \ \mbox{ when } t> T'_\epsilon.
\end{equation}
\end{itemize}
\end{prop}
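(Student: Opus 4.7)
The plan is to mirror the proof of Proposition \ref{pro:sigma01}, swapping $h(t)$ and the rightward semi-wave $U(t,R(t)-x;\beta-r)$ for $g(t)$ and the leftward semi-wave $\wh{U}(t,x+L(t))=U(t,x+L(t);-\beta-l)$ supplied by Proposition \ref{prop:tsw left}. After normalizing via $v:=u/P(t)$ and $V_l(t,z):=U(t,z;-\beta-l)/P(t)$, the function $V_l$ satisfies the left-side analogue of \eqref{fuzhu2} with drift $-\beta-l$ in place of $\beta-r$ and $\gamma(t)(V_l)_z(t,0)=l(t;\beta)$; the nonlinearity $F$ still obeys \eqref{fuzhu3}, which is the crucial algebraic ingredient for the upper/lower solution arguments.

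For the lower estimate $g(t)\ge -L(t)-H_+$ I would construct a moving upper solution
$$
v^+(t,x):=(1+M'e^{-\delta t})\,V_l\bigl(t,x-g^+(t)\bigr),\quad g^+(t):=-\!\!\int_{T'}^{t}\!\!l(s;\beta)\,ds+g(T')-KM'(e^{-\delta T'}-e^{-\delta t})-X,
$$
on $\{(t,x):g^+(t)\le x\le h(t),\ t\ge T'\}$. The left Stefan inequality $(g^+)'(t)\le -\gamma(t)v^+_x(t,g^+(t))$ reduces to $K\delta>\max_{t}l(t;\beta)$; the initial comparison $v^+(T',\cdot)\ge v(T',\cdot)$ is arranged from $V_l(t,\infty)=1$ by taking $X$ large, together with the ODE bound $v\le 1+\varepsilon e^{\delta mT}e^{-\delta t}$ derived exactly as in Step 1 of Proposition \ref{pro:sigma01}; and the interior inequality $\mathcal N v^+\ge 0$ is the verbatim algebraic computation that appeared there---after eliminating $(V_l)_t$ using its PDE, the $-l(V_l)_z$ term plays exactly the role of $-rV_z$, and \eqref{fuzhu3} closes the estimate both in the asymptotic region $V_l\ge 1-\varepsilon$ and in the strip near the free boundary where $(V_l)_z$ has a positive lower bound.

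For the upper estimate $g(t)\le -L(t)+H_-$, Lemma \ref{lem:rough gh}(i) provides $\|v(t,\cdot)-1\|_{L^\infty([-c_1 t,c_2 t])}\le K_1 e^{-\delta t}$ for $t\ge T''$, which I use to build the lower solution
$$
v^-(t,x):=(1-K_1 e^{-\delta t})\,V_l\bigl(t,x-g^-(t)\bigr),\ g^-(t):=-\!\!\int_{T''}^{t}\!\!l(s;\beta)\,ds+K_2 K_1(e^{-\delta T''}-e^{-\delta t})-c_1 T'',
$$
on $\{g^-(t)\le x\le -c_1 t,\ t\ge T''\}$. The free-boundary inequality demands $K_2\delta>\max_{t} l(t;\beta)$, the interior sign condition $\mathcal N v^-\le 0$ is the mirror of its right-side counterpart, and the crucial additional check $v^-(t,-c_1 t)\le v(t,-c_1 t)$ at the right spatial endpoint is delivered by the factor $1-K_1 e^{-\delta t}$ combined with Lemma \ref{lem:rough gh}(i). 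Comparison then yields $g(t)\le g^-(t)\le -L(t)+H_-$, which together with the previous step proves \eqref{gh1}.

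Assertion (ii) then follows by sandwiching: since $V_l(t,\infty)=1$, pick $X_1(\epsilon)$ so that $V_l(t,z)\ge 1-\epsilon/(2P^0)$ for $z\ge X_1(\epsilon)$; on the strip $\{g(t)+X'_\epsilon\le x\le 0\}$ with $X'_\epsilon:=H_++H_-+X_1(\epsilon)$, the decaying prefactors $1\pm K_1 e^{-\delta t}$ combine with this lower bound to force $|v(t,x)-1|\le\epsilon/P^0$ once $t>T'_\epsilon$, and multiplying through by $P(t)$ recovers \eqref{u to P near 0 left}. I expect the main obstacle to be the lower-solution construction: because $x=-c_1 t$ is not a natural free boundary of $v^-$, the comparison must be applied as an interior argument with Lemma \ref{lem:rough gh}(i) serving as a side barrier, and $K_1,\,K_2$ together with the auxiliary shift $-c_1 T''$ must be coordinated so that $(1-K_1 e^{-\delta t})V_l\bigl(t,-c_1 t-g^-(t)\bigr)$ stays strictly below the near-$P(t)$ value of $v(t,-c_1 t)$ uniformly in large $t$.
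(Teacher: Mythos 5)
Your proposal is correct and is essentially the same argument the paper has in mind: the paper's own ``proof'' of Proposition~\ref{pro:sigma12} is just the remark that it follows by the same argument as Proposition~\ref{pro:sigma01}, and what you have written is precisely that left-side mirror. You correctly replace $V=U(\cdot;\beta-r)/P$ with $V_l=U(\cdot;-\beta-l)/P$ and use that $V_l$ solves the drift-$(-\beta-l)$ analogue of \eqref{fuzhu2} with $\gamma(V_l)_z(t,0)=l(t;\beta)$, so that after differentiating $v^{\pm}$ the terms $-l\,(V_l)_z$ and $-(g^\pm)'(V_l)_z$ combine exactly as $-rV_z$ and $(h^\pm)'V_z$ did; the Stefan inequalities reduce to $K\delta>\max_t l$ and $K_2\delta>\max_t l$ as you computed; the initial comparison at $t=T'$ and the interior sign of $\mathcal{N}v^+$ use \eqref{fuzhu3} verbatim; and for the lower solution you correctly identify that $x=-c_1 t$ is only a side barrier, with Lemma~\ref{lem:rough gh}(i) supplying $v(t,-c_1 t)\geqslant 1-K_1 e^{-\delta t}\geqslant v^-(t,-c_1 t)$, which is exactly how the artificial boundary $x=c_l t$ is handled in the right-side proof. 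The sandwiching step (ii) with $X'_\epsilon=H_++H_-+X_1(\epsilon)$ also matches Step~3 of Proposition~\ref{pro:sigma01}.
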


\subsection{Asymptotic profiles of the (virtual) spreading solutions}\label{sub52}

\begin{thm}\label{thm:WHG}
Assume that $\HH$, $\HHH$ and $\beta\in\PP$ satisfies $0\leqslant\bar{\beta}
<B(\tilde{\beta})$. Assume further that spreading or virtual spreading happens.
Then there exists $H_1\in\R$  such that
\begin{equation}\label{HWt1}
\lim_{t\to\infty}[h(t) - R(t) ]= H_1 \ \ \lim_{t\to\infty} [h'(t) -r(t;\beta)] =0,
\end{equation}
\begin{equation}\label{WHt1}
\lim\limits_{t\to\infty} \| u(t,\cdot)- U(t,R(t)+H_1 -\cdot;\beta -r)\| _{L^\infty ( [c_l t, h(t)])}=0,
\end{equation}
where $R(t) = \int_0^t r(s;\beta) ds$ and $c_l$ is defined by \eqref{def:cl}.
Here we extend $U(t,z;\beta -r)$ to be zero for $z<0$.
\end{thm}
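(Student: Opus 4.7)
The plan is to combine the a priori bounds in Proposition \ref{pro:sigma01} with a compactness argument identifying $\omega$-limits of shifted solutions with the semi-wave, and then use a zero-number argument to pin down the unique translation constant $H_1$. The $L^\infty$-profile estimate and the $h'$-convergence in \eqref{HWt1}--\eqref{WHt1} then follow from uniform parabolic regularity combined with the interior convergence in Proposition \ref{pro:sigma01}(ii).

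First, by Proposition \ref{pro:sigma01}(i) the function $H(t) := h(t) - R(t)$ is bounded, and Schauder estimates up to the moving boundary (after the usual straightening $y = x - h(t)$) give uniform $C^{1+\nu/2, 2+\nu}$ control on $u$ near the boundary and uniform $C^{1+\nu/2}$ control on $h$. Given $t_n = n_k T \to \infty$, the shifts $u_n(t,x) := u(t+t_n, x+R(t_n))$ and $h_n(t) := h(t+t_n) - R(t_n)$ are precompact, and a diagonal extraction over all period-shifts $jT$, $j \in \mathbb{Z}$, produces an entire limit $(u_\infty, h_\infty)$ of the free boundary problem on $\{t \in \mathbb{R}, \, x \le h_\infty(t)\}$. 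Invariance of the $\omega$-limit under $(u,h) \mapsto (u(\cdot+T, \cdot+R(T)), h(\cdot+T)-R(T))$ makes $H_\infty(t) := h_\infty(t)-R(t)$ a bounded $T$-periodic function, while Proposition \ref{pro:sigma01}(ii) (sent with $\epsilon \to 0$) forces $u_\infty(t,x) \to P(t)$ uniformly in $t$ as $x \to -\infty$. Writing $\Phi(t,z) := u_\infty(t, h_\infty(t)-z)$, this makes $(\Phi, h_\infty')$ a $T$-periodic solution of the semi-wave system \eqref{p-sw-k} with $r$ replaced by $h_\infty'$; the uniqueness in Proposition \ref{prop:tsw right} (see Remark \ref{rem:unique r}) then forces $h_\infty'(t) \equiv r(t;\beta)$ and $\Phi(t,z) \equiv U(t,z;\beta-r)$, so $H_\infty \equiv H_*$ is a constant and $u_\infty(t,x) = U(t, R(t)+H_*-x;\beta-r)$.

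The delicate step is to show $H_*$ is independent of the subsequence, that is, $\underline H := \liminf_{t\to\infty} H(t)$ equals $\overline H := \limsup_{t\to\infty} H(t)$. Suppose not; pick $H_0 \in (\underline H, \overline H)$ and let $V_0(t,x) := U(t, R(t)+H_0-x;\beta-r)$, extended by zero for $x > R(t)+H_0$. I would apply the Angenent zero-number theorem to $w(t,x) := u(t,x) - V_0(t,x)$, after the change of variable $y = x - \int_0^t \beta(s)\,ds$ that removes the drift term (cf.\ the footnote in Section 3), so that zero-number monotonicity is directly applicable. The two subsequences realizing $\underline H$ and $\overline H$ produce, via the first two steps above, the shift limits $U(\cdot, R(\cdot)+\underline H-\cdot;\beta-r)$ and $U(\cdot, R(\cdot)+\overline H-\cdot;\beta-r)$. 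By the strict monotonicity of $H \mapsto U(\cdot, R(\cdot)+H-\cdot;\beta-r)$, the appropriately shifted $w$ is eventually strictly negative along one subsequence and strictly positive along the other, forcing $w(t,\cdot)$ to switch overall sign at infinitely many times. Each sign-switch produces new interior zeros of $w(t,\cdot)$, contradicting the non-increasing zero count. Hence $\underline H = \overline H =: H_1$.

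Finally, once $H(t) \to H_1$, the uniform $C^{1+\nu/2}$ bound on $h$ upgrades the subsequential $h_n' \to r$ to $h'(t) - r(t;\beta) \to 0$, giving \eqref{HWt1}. For \eqref{WHt1}, I split $[c_l t, h(t)]$ into an interior part $I_{\rm int} := [c_l t, h(t)-X_\epsilon]$ and a boundary layer $I_{\rm bdy} := [h(t)-X_\epsilon, h(t)]$. On $I_{\rm int}$, Proposition \ref{pro:sigma01}(ii) gives $|u - P(t)| \le \epsilon$; since $U(t,z;\beta-r) \to P(t)$ as $z \to +\infty$ uniformly in $t$, enlarging $X_\epsilon$ if needed also gives $|U(t, R(t)+H_1-x;\beta-r) - P(t)| \le \epsilon$ on $I_{\rm int}$, and the triangle inequality closes this part. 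On $I_{\rm bdy}$, the convergence $u_n \to U(\cdot, R(\cdot)+H_1-\cdot;\beta-r)$ (now full, not just subsequential, by uniqueness of $H_*$) directly yields the $L^\infty$ estimate. The main obstacle is the zero-number step: the bounds in Proposition \ref{pro:sigma01} pin $H(t)$ in a bounded interval whose length is controlled only by structural constants, so collapsing this interval to a single point requires genuinely new input, for which the zero-number method is the natural tool suggested by the paper's own footnote.
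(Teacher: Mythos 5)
Your plan shares the same main ingredients as the paper's proof---zero-number arguments, compactness of the shifted orbits, and the a priori bounds in Proposition~\ref{pro:sigma01}---but you apply them in the opposite order, and this creates a genuine gap in the middle step. The paper first pins down the free boundary and only afterwards invokes compactness: in Step~1 it fixes an arbitrary $y_0\in\R$, works in the frame $y=x-R(t)$, and compares $u_1(t,y)=u(t,y+R(t))$ with $V_1(t,y)=U(t,y_0-y;\beta-r)$ on $J(t)=[g_1(t),\min\{y_0,h_1(t)\}]$. Each time $h_1(t):=h(t)-R(t)$ crosses $y_0$, the sign of $\eta_1=u_1-V_1$ at the right endpoint of $J(t)$ flips, so the (finite, non-increasing) zero count of $\eta_1$ drops strictly; hence $h_1(t)-y_0$ has eventually one sign, and since $y_0$ is arbitrary and $h_1$ is bounded, $h_1(t)\to H_1$. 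Combined with the uniform Hölder bound on $h_1'$, this also gives $h'(t)-r(t;\beta)\to0$, which is then used in Step~2 to make $z=0$ a degenerate zero of $w(t,z)-V_2(t,z)$ for the entire limit $w$ obtained by compactness, so that zero-number theory forces $w\equiv V_2$.

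The gap in your proposal is the claim that ``invariance of the $\omega$-limit under $(u,h)\mapsto(u(\cdot+T,\cdot+R(T)),h(\cdot+T)-R(T))$ makes $H_\infty(t):=h_\infty(t)-R(t)$ a bounded $T$-periodic function.'' Invariance of the $\omega$-limit \emph{set} under the period map does not make individual elements fixed points of it; an entire limit $(u_\infty,h_\infty)$ extracted along $t_n=n_kT$ need not be $T$-periodic, so you cannot yet write $\Phi(t,z)=u_\infty(t,h_\infty(t)-z)$ as a $T$-periodic solution of \eqref{p-sw-k} and invoke the uniqueness in Proposition~\ref{prop:tsw right}. Since your ``delicate step'' then relies on having already identified the subsequential limits with translates of $U$, the argument becomes circular: you need the profile identification to get $\underline H=\overline H$, but that identification was supposed to be established by periodicity that has not been proved. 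The paper sidesteps this precisely by proving $h_1(t)\to H_1$ and $h'(t)\to r(t;\beta)$ \emph{before} taking any $\omega$-limits, so that the degenerate-zero condition at $z=0$ is available for free and no a priori periodicity of the limit is needed. Your ``delicate step,'' if decoupled from the flawed identification, is essentially the paper's Step~1 in contrapositive form and could be made to work; but as written it depends on the unsupported periodicity claim, and the phrase ``eventually strictly negative/positive'' should be localised to a neighbourhood of the moving boundary and tied explicitly to the boundary crossing $h(t)=R(t)+H_0$ producing the degenerate zero that forces the strict drop in zero number. The final splitting into interior and boundary layer to obtain \eqref{WHt1} matches the paper's Step~3.
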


\begin{proof}
We use moving coordinate frames in our approach.

{\bf Step 1}. Using the moving coordinate $y :=x-R(t)$ we prove \eqref{HWt1}.
Set
$$
\begin{array}{l}
h_1(t):= h(t)-R(t), \quad g_1(t) :=g(t)-R(t) \mbox{ for } t\geqslant 0,\\
\mbox{and }  u_1(t,y):= u (t, y+R(t)) \mbox{ for } y\in[g_1(t),h_1(t)],\ t\geqslant 0.
\end{array}
$$
Then $({u_1},{g_1},{h_1})$ solves
\begin{equation}\label{pWH}
\left\{
\begin{array}{ll}
 {u_1}_t ={u_1}_{yy}+[r(t)-\beta(t)]{u_1}_y+ f(t,{u_1}), &  {g_1}(t)<y<{h_1}(t),\ t>0,\\
 {u_1}(t, y)= 0,\ {g'_1}(t)=-\mu(t){u_1}_y(t,y)-r(t), &  y={g_1}(t),\ t>0,\\
 {u_1}(t, y)= 0,\ {h'_1}(t)=-\mu(t){u_1}_y(t,y)-r(t), &  y={h_1}(t),\ t>0.
\end{array}
\right.
\end{equation}

For any $y_0\in\R$, the function $V_1 (t,y):=U(t,y_0- y; \beta -r)$ satisfies
\[
\left\{
\begin{array}{l}
 V_{1t} =V_{1yy}+[r(t)-\beta(t)]V_{1y}+ f(t,V_1), \quad   -\infty<y<y_0,\ t>0,\\
 V_1 (t, -\infty)= P(t),\ V_1 (t,y_0)=0,\ r(t)=-\mu(t)V_{1y} (t,y_0), \quad  t>0.
\end{array}
\right.
\]
We now consider the number of zeros of $\eta_1(t,y) := {u_1}(t,y) - V_1(t,y)$ in the interval
$J(t):=[{g_1}(t), \min\{y_0, {h_1}(t)\}]$.
Since ${g_1}(t)\to -\infty$ we have $V_1 (t,{g_1}(t))-P(t) \to0$ as $t\to\infty$.
Hence $ V_1 (t,{g_1}(t))>0= {u_1}(t,{g_1}(t))$ for all large $t$. The right end ${h_1}(t)$
of ${u_1}(t,y)$ may get across $y_0$ many times. By the zero number argument
(cf. \cite[Lemma 2.4]{DuLouZ}, \cite[Lemma 3.10 (a)]{GLZ}) we know that
$\mathcal{Z}_{J(t)}[\eta_1(t, \cdot) ]$ (which denotes the number of zeros of
the function $\eta_1 (t, \cdot)$ in $J(t)$) is finite, and it decreases strictly when ${h_1}(t)$
get across $y_0$. So ${h_1}(t)-y_0$ changes sign at most finitely
many times, namely, ${h_1}(t) >y_0$, or ${h_1}(t) <y_0$, or ${h_1}(t)\equiv y_0$ for all large $t$.
Since ${h_1}(t)$ is bounded by \eqref{hh1} and $y_0$ is an arbitrary point, we conclude that
${h_1}(t)$ converges as $t\to \infty$ to a number $H_1\in \R$. This proves the first limit in \eqref{HWt1}.

By the parabolic estimate as in \cite{DuLou, Wang} etc. we know that, for any $\tau>0 $,
$\|{h'}(t)\|_{C^{\nu/2}([\tau , \tau+1])}$ is bounded from above by a constant $C_1$ independent of $\tau$. Since
$r\in \PP_+\subset C^{\nu/2}([0,T])$ we conclude that there is a constant  $C>0$ independent of $\tau$ such that
\[
\|{h'_1}(t)\|_{C^{\nu/2}([\tau, \tau + 1])}\leqslant C.
\]
Combining with the convergence of ${h_1}(t)$ we obtain $h'_1(t)\to 0$ as $t\to \infty$. The second limit in \eqref{HWt1}
then follows.

\medskip

{\bf Step 2}. We use another moving coordinate $z:= x-h(t)$ to prove \eqref{WHt1}. Set
$$
g_2(t) := g(t)-h(t) \mbox{ for } t\geqslant 0,\ \ \mbox{and}\ \ u_2(t,z) := u(t, z+h(t)) \mbox{ for } z\in [g_2 (t), 0],\ t\geqslant 0.
$$
Then the pair $(u_2, g_2)$ solves
\begin{equation}\label{p u2}
\left\{
\begin{array}{ll}
 {u_2}_t ={u_2}_{zz}+[h'(t)-\beta(t)]{u_2}_z+ f(t,{u_2}), &  {g_2}(t)<z<0,\ t>0,\\
 {u_2}(t, z)= 0,\ {g'_2}(t)=- \mu(t){u_2}_z (t,z)-h'(t), &  z={g_2}(t),\ t>0,\\
 {u_2}(t, 0)= 0,\ h'(t) = -\mu(t){u_2}_z(t,0), &  t>0.
 \end{array}
\right.
\end{equation}
We will compare the $\omega$-limit functions of $u_2$ with
the function $V_2 (t,z):=U(t,-z;\beta -r)$, where $V_2$ solves
\[
\left\{
\begin{array}{l}
 V_{2t} =V_{2zz}+[r(t;\beta)-\beta(t)]V_{2z}+ f(t,V_2), \quad   -\infty<z<0,\ t\in \R,\\
 V_2 (t, -\infty)= P(t),\ V_2 (t,0)=0,\ r(t)=-\mu(t)V_{2z} (t,0), \quad  t\in \R.
\end{array}
\right.
\]

For any sequence of integers $\{m_n\}$ satisfying $m_n  \to \infty\ (n\to \infty)$,
since $u_2(m_n T +t, z)$ is bounded in $L^\infty$ norm, it follows from the $L^p$ theory, the Sobolev embedding theorem as well as
the Schauder estimates that, for any $K>0$, $\|u_2(m_n T +t,z)\|_{C^{1+\nu/2, 2+\nu}
([-K, K]\times [-K,0])}$ is bounded by a constant $C$ depending on $K$ but not on $n$.
Hence it has a subsequence converging in the space $C^{1, 2} ([-K, K]\times [-K,0])$.
Using Cantor's diagonal argument, there exist a function $w(t,z)\in C^{1+\nu/2, 2+\nu}(\R \times (-\infty, 0])$
and a subsequence of $\{m_n\}$, denoted again by $\{m_n\}$, such that
$u_2 (m_n T +t, z) \to w(t,z)$ in the topology of $C^{1,2}_{loc} (\R \times (-\infty,0])$.
Replacing $t$ by $m_n T +t$ in \eqref{p u2} and taking limit as $n\to\infty$  we obtain
\[
\left\{
\begin{array}{l}
 w_t = w_{zz} + [r(t)-\beta(t)]w_z+ f(t,w), \quad    -\infty <z<0,\ t\in \R,\\
 w(t,0)=0,\ r(t)=-\mu(t) w_z (t,0), \quad t\in \R.
\end{array}
\right.
\]
Consider the function $\eta_2 (t,z):= w(t,z) - V_2(t,z)$. It is clear that $z=0$
is a degenerate zero of $\eta_2 (t,\cdot)$ for all $t\in \R$. Hence, the zero number argument
(cf. \cite[Lemma 2.4]{DuLouZ}, \cite[Lemma 3.10 (a)]{GLZ}) indicates that $w(t,z)
\equiv V_2(t,z)$.  Since $\{m_n\}$ is an arbitrarily chosen consequence we have, for any $K>0$,
$$
\|u_2(t+nT, z) -V_2(t,z)\|_{L^\infty ([-K,K]\times [-K,0])} \to 0 \quad \mbox{as } n\to \infty,
$$
or, equivalently,
$$
\|u(t+nT, x) - U(t, h(t+nT)-x;\beta -r)\|_{L^\infty ([-K,K]\times [h(t+nT)-K,h(t+nT)])} \to 0 \quad \mbox{as } n\to \infty.
$$
Since $U(t,z;\beta -r)$ is $T$-periodic in $t$ we have
$$
\|u(t, \cdot) - U(t, h(t)-\cdot;\beta -r)\|_{L^\infty ([h(t)-K,h(t)])} \to 0 \quad \mbox{as } t\to \infty.
$$
Using the limit $h(t)-R(t)\to H_1$ in \eqref{HWt1} we obtain
\begin{equation}\label{u to U near h(t)}
\|u(t, \cdot) - U(t, R(t)+H_1 -\cdot;\beta -r)\|_{L^\infty ([h(t)-K,h(t)])} \to 0 \quad \mbox{as } t\to \infty.
\end{equation}
Here we have extended $U(t,z;\beta -r)$ to be zero for $z<0$.

\smallskip

{\bf Step 3}. Finally we prove \eqref{WHt1}.

For any given small $\epsilon >0$, by \eqref{u to P near 0} in Proposition \ref{pro:sigma01}, there exist
$X_\epsilon , T_\epsilon >0$ such that
$$
|u(t,x) - P(t)| \leqslant \epsilon\quad \mbox{for } c_l t\leqslant x\leqslant h(t) - X_\epsilon,\ t>T_\epsilon.
$$
Since $U(t,\infty; \beta -r)=P(t)$, there exists $X^*_\epsilon > X_\epsilon$ such that
$$
|U(t,R(t) +H_1 -x;\beta -r) -P(t)| \leqslant \epsilon\quad  \mbox{for } x\leqslant R(t)+ 2H_1 -X^*_\epsilon, \ t\in [0,T].
$$
Taking $T^*_\epsilon >T_\epsilon$ large such that $h(t) <R(t)+2H_1$ for $t>T^*_\epsilon$, then by combining the
above two inequalities  we obtain
$$
|u(t,x) - U(t,R(t) +H_1 -x;\beta -r) | \leqslant 2\epsilon\quad \mbox{for } c_l t\leqslant x\leqslant h(t)-X^*_\epsilon, \ t> T^*_\epsilon.
$$
Taking $K= X^*_\epsilon$ in \eqref{u to U near h(t)} we see that for some $T^{**}_\epsilon >T^*_\epsilon$, we have
$$
|u(t, x) - U(t, R(t)+H_1 -x;\beta -r)| \leqslant \epsilon \quad \mbox{for } h(t) -X^*_\epsilon \leqslant x \leqslant h(t),\ t>T^{**}_\epsilon.
$$
This prove \eqref{WHt1}.
\end{proof}

Using a similar argument as above one can obtain the following result.

\begin{thm}\label{thm:WGH}
Assume that $\HH$, $\HHH$ and $\beta\in\PP$ satisfies $0\leqslant\bar{\beta}
<\bar{c}$. Assume further that spreading happens. Then there exist $G_1\in\R$
such that \eqref{left spreading speed} and \eqref{profile convergence 1-left} hold.
\end{thm}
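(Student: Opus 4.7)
The argument is the mirror image of the proof of Theorem~\ref{thm:WHG}, now using the leftward periodic traveling semi-wave $\wh U(t,z;-\beta-l)$ from Proposition~\ref{prop:tsw left} (whose existence is guaranteed precisely because $0\leqslant\bar\beta<\bar c$). The role played by $R(t)$, $h(t)$ and $U(t,\cdot;\beta-r)$ in the rightward case is played here by $-L(t)$, $g(t)$ and $\wh U(t,\cdot;-\beta-l)$, and the key estimates come from Proposition~\ref{pro:sigma12} instead of Proposition~\ref{pro:sigma01}.

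For the first two limits, I would pass to the moving coordinate $y:=x+L(t)$ and set $g_1(t):=g(t)+L(t)$, $h_1(t):=h(t)+L(t)$, $u_1(t,y):=u(t,y-L(t))$. A direct computation shows that $(u_1,g_1,h_1)$ solves a free boundary problem for the equation $u_{1t}=u_{1yy}-[\beta(t)+l(t;\beta)]u_{1y}+f(t,u_1)$, and for each fixed $y_0\in\R$ the shifted profile $V_1(t,y):=\wh U(t,y-y_0;-\beta-l)$ satisfies the same equation on $\{y>y_0\}$ with $V_1(t,y_0)=0$ and $V_1(t,\infty)=P(t)$. By Proposition~\ref{pro:sigma12}(i), $g_1(t)$ is bounded; since spreading implies $h_1(t)\to\infty$ and $V_1(t,h_1(t))\to P(t)$, we get $V_1>u_1$ at the right boundary for all large $t$. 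Applying the zero number lemma (as in \cite{DuLouZ} and \cite[Lemma 3.10]{GLZ}) to $u_1-V_1$ on the common domain $[\max\{y_0,g_1(t)\},h_1(t)]$, the number of sign changes is finite and strictly decreases each time $g_1(t)$ crosses $y_0$, so $g_1(t)-y_0$ changes sign at most finitely often. Since $y_0$ is arbitrary and $g_1$ is bounded, $g_1(t)\to G_1\in\R$. Standard parabolic estimates yield a uniform $C^{\nu/2}$ bound on $g_1'$, and together with the convergence of $g_1$ this forces $g_1'(t)\to 0$, proving \eqref{left spreading speed}.

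For the profile convergence I would switch to the frame $z:=x-g(t)$, set $u_2(t,z):=u(t,z+g(t))$, and note $h(t)-g(t)\to\infty$. For any sequence of integers $m_n\to\infty$, parabolic $L^p$-Schauder estimates and Cantor's diagonal argument extract a subsequence along which $u_2(m_nT+t,z)\to w(t,z)$ in $C^{1,2}_{loc}(\R\times[0,\infty))$. Using $g'(t)+l(t;\beta)\to 0$ from Step~1, the limit satisfies $w_t=w_{zz}-[\beta(t)+l(t;\beta)]w_z+f(t,w)$ with $w(t,0)=0$ and $\mu(t)w_z(t,0)=l(t;\beta)$. Comparing with $V_2(t,z):=\wh U(t,z;-\beta-l)$, the difference $\eta_2:=w-V_2$ has a degenerate zero at $z=0$ for every $t\in\R$ (both values and $z$-derivatives agree), so the zero number argument forces $w\equiv V_2$; since the subsequence was arbitrary, $\|u(t,\cdot)-\wh U(t,\cdot-g(t);-\beta-l)\|_{L^\infty([g(t),g(t)+K])}\to 0$ for every $K>0$. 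Rewriting $\cdot-g(t)=\cdot+L(t)-(g(t)+L(t))$ and using $g(t)+L(t)\to G_1$ with the $T$-periodicity of $\wh U$ in $t$ turns this into the desired $\wh U(t,\cdot+L(t)-G_1;-\beta-l)$ convergence near $x=g(t)$. Finally, Proposition~\ref{pro:sigma12}(ii) gives $u(t,\cdot)\to P(t)$ uniformly on $[g(t)+X'_\epsilon,0]$, while $\wh U(t,z;-\beta-l)\to P(t)$ as $z\to\infty$ uniformly in $t$; patching these two estimates together with the one near $x=g(t)$ yields \eqref{profile convergence 1-left} on the full interval $[g(t),0]$.

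The main obstacle is the same as in Theorem~\ref{thm:WHG}: one must choose the moving frame so that the $\omega$-limit equation matches the semi-wave equation exactly, and then apply the zero number lemma to a difference that has a degenerate zero at the free boundary rather than a transverse one. All other ingredients (boundedness of $g(t)+L(t)$, the semi-wave $\wh U$, the uniform interior convergence $u\to P$) are supplied by earlier results, and the hypothesis $\bar\beta<\bar c$ enters only through the existence of the leftward semi-wave in Proposition~\ref{prop:tsw left}.
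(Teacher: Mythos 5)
Your proposal is correct and is exactly the argument the paper intends: the paper's own proof of this theorem is the single sentence ``Using a similar argument as above one can obtain the following result,'' deferring entirely to the proof of Theorem \ref{thm:WHG}, and your write-up is a careful, correct unpacking of that mirror argument (moving frames $y=x+L(t)$ and $z=x-g(t)$, the leftward semi-wave $\wh U$ from Proposition \ref{prop:tsw left}, the zero-number lemma applied when $g_1(t)$ crosses $y_0$, Proposition \ref{pro:sigma12} in place of Proposition \ref{pro:sigma01}, and the patching of the three regimes). Nothing is missing, and the computations of the transformed equations and the degenerate zero at the free boundary all check out.
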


\smallskip

\noindent
{\it Proof of Theorem \ref{thm:profile of spreading sol}}. The conclusions in
Theorem \ref{thm:profile of spreading sol} follow from Theorems \ref{thm:WHG} and \ref{thm:WGH}.
\hfill $\square$


\begin{thebibliography}{123456}

\bibitem{A}
S.B.~Angenent,
{\em The zero set of a solution of a parabolic equation}, J. Reine Angew. Math., 390 (1988), 79-96.


\bibitem{AW1}D.G.~Aronson and H.F. Weinberger,
{\em Nonlinear diffusion in population genetics, combustion, and
nerve pulse propagation}, in Partial Differential Equations and
Related Topics, Lecture Notes in Math. 446, Springer, Berlin, 1975,
pp. 5-49.

\bibitem{AW2}D.G.~Aronson and H.F. Weinberger,
{\em Multidimensional nonlinear diffusion arising in population genetics}, Adv. Math., 30 (1978), 33-76.

\bibitem{BH1} H. Berestycki, F. Hamel, G. Nadin,
{\em Asymptotic spreading in heterogeneous diffusive excitable media},
J. Funct. Anal., 255 (2008), 2146--2189.

\bibitem{BPS} P. Brunovsk\'{y}, P. Pol\'{a}\v{c}ik, B. Sandstede,
{\em Convergence in general periodic parabolic equations in one space
dimension}, Nonlinear Anal., 18 (1992), 209-215.

\bibitem{BDK}
G.~Bunting, Y.~Du and K.~Krakowski,
{\em Spreading speed revisited: analysis of a free boundary model},
Netw. Heterog. Media,  7 (2012), 583-603.

\bibitem{DGP} Y. Du, Z.M. Guo and R. Peng,
{\em A diffusion logistic model with a free boundary in time-periodic
environment}, J. Funct. Anal., 265 (2013), 2089-2142.

\bibitem{DuLin}
Y.~Du and Z.~Lin,
{\em Spreading-vanishing dichtomy in the diffusive logistic model
with a free boundary}, SIAM J. Math. Anal., 42 (2010), 377-405.

\bibitem{DuLou}
Y. Du and B. Lou,
{\em Spreading and vanishing in nonlinear diffusion problems with free boundaries}, J. Eur. Math. Soc., 17 (2015), 2673-2724.

\bibitem{DuLouZ}
Y. Du, B. Lou, M. Zhou,
{\em Nonlinear diffusion problems with free boundaries: convergence, transition speed and zero number arguments}, SIAM J. Math. Anal., 47 (2015), 3555-3584.

\bibitem{DMZ}
Y. Du, H. Matsuzawa and M. Zhou,
{\em Sharp estimate of the spreading speed determined by nonlinear free boundary problems},
SIAM J. Math. Anal., 46 (2014), 375-396.

\bibitem{Fisher}
R.A.~Fisher,  {\em The wave of advance of advantageous genes}, Ann.
Eugenics, 7 (1937), 335-369.

\bibitem{GLZ}
H.~Gu, B.~Lou and M.~Zhou,
{\em Long time behavior of solutions of Fisher-KPP equation with advection and free boundaries}, J. Funct. Anal., 269 (2015), 1714-1768.

\bibitem{H1} F. Hamel,
{\em Qualitative properties of monostable pulsating fronts: exponential
decay and monotonicity}, J. Math. Pures Appl., 89 (2008), 355--399.

\bibitem{HNRR}
F.~Hamel, J.~Nolen, J.~Roquejoffre, and L.~Ryzhik,
{\em A short proof of the logarithmic Bramson correction in Fisher-KPP equations},
Netw. Heterog. Media, 8 (2013), 275-289.

\bibitem{HR} F. Hamel, L. Roques,
{\em Uniqueness and stability properties of monostable pulsating fronts},
J. Eur. Math. Soc., 13 (2011), 345-390.

\bibitem{P} P. Hess,
{\it Periodic-parabolic Boundary Value Problems and Positivity},
 Pitman Res., Notes in Mathematics 247, Longman Sci. Tech. Harlow, 1991.

\bibitem{HHP}
D.~Hilhorst, R.~van der Hout and L.A.~Peletier,
{\em Diffusion in the presence of fast reaction: the case of a general monotone reaction term},
J. Math. Sci. Univ. Tokyo, 4 (1997), 469-517.

\bibitem{KPP} A.N.~Kolmogorov, I.G.~Petrovski and N.S.~Piskunov,
{\em A study of the diffusion equation with increase in the amount
of substance, and its application to a biological problem}, Bull.
Moscow Univ. Math. Mech., 1 (1937), 1-25.

\bibitem{LLS1} F. Li, X. Liang and W.~Shen,
{\em Diffusive KPP equations with free boundaries in time almost periodic environments: I. spreading and vanishing dichotomy},
Discrete Continu. Dynam. Syst., 36 (2016), 3317-3338.

\bibitem{Nad} G. Nadin,
{\em Existence and uniqueness of the solution of a space-time
periodic reaction-diffusion equation}, J. Differential Equations,
249 (2010), 1288-1304.

\bibitem{Wang} M.X. Wang,
{\em A diffusive logistic equation with a free boundary and sign-changing coefficient in time-periodic environment}, J. Funct. Anal., 270 (2016), 483-508.

\end{thebibliography}
\end{document}